\documentclass[reqno,psamsfonts,11pt]{gen-j-l}
\usepackage{amssymb,amsmath,amsbsy,amsthm,esint,setspace,graphicx,color}
\usepackage{hyperref}
\usepackage{amsrefs}
\usepackage{enumitem}

\raggedbottom

\def\R{{\mathbb R}}

\newtheorem{theorem}{Theorem}[section]
\newtheorem{lemma}[theorem]{Lemma}
\newtheorem{proposition}[theorem]{Proposition}
\newtheorem{corollary}[theorem]{Corollary}

\newtheorem{example}{Example}[section]

\newtheorem*{remark}{Remark}

\begin{document}

\title[Boltzmann's equation and the Wigner transform]
{Local well-posedness for Boltzmann's equation and the 
Boltzmann hierarchy via Wigner transform}

\author{Thomas Chen, Ryan Denlinger, and Nata{\v s}a Pavlovi{\' c}}

\begin{abstract}
We use the dispersive properties of the linear Schr{\" o}dinger equation to
 prove local well-posedness results for the Boltzmann equation and
the related Boltzmann hierarchy, set in the spatial
domain $\mathbb{R}^d$ for $d\geq 2$. The proofs are based on the use
of the (inverse) Wigner transform along with the spacetime Fourier transform.
The norms for the initial data $f_0$ are weighted versions
of the Sobolev spaces $L^2_v H^\alpha_x$ with $\alpha \in
\left( \frac{d-1}{2},\infty\right)$.
Our main results are local well-posedness for the Boltzmann equation for
cutoff Maxwell molecules and hard spheres, as well as local well-posedness
for the Boltzmann hierarchy for cutoff Maxwell molecules (but not hard
spheres); the latter result holds without any factorization assumption for
the initial data.
\end{abstract}

\maketitle

\section{Introduction}
\label{intro}

Boltzmann's equation is an evolutionary partial differential equation
(PDE) which describes the behavior of a dilute gas of identical particles
in a specific scaling limit. The equation describes the
time evolution of a density function $f(t,x,v)\geq 0$, where
$x,v \in \mathbb{R}^d$ are the position and velocity of a typical particle.

The Cauchy problem for Boltzmann's equation is one of the fundamental
mathematical problems in kinetic theory and it may be written in the following
form:
\begin{align}
\label{eq:boltz}
& \left( \partial_t + v\cdot \nabla_x\right) f(t,x,v) =
Q (f,f) (t,x,v) \\
& f(0,x,v) = f_0(x,v)
\end{align}
where the \emph{collision operator} $Q$ is defined
as follows:
\begin{equation}
\label{eq:coll-1}
\begin{aligned}
& Q (f,f) (t,x,v) = \int_{\mathbb{R}^d} \int_{\mathbb{S}^{d-1}}
d\omega dv_2
\mathbf{b} 
\left( |v-v_2|,\omega\cdot\frac{v-v_2}{|v-v_2|}\right)\times \\
&\qquad \qquad \qquad \qquad \times \left(
f(t,x,v^*) f(t,x,v_2^*)-f(t,x,v)f(t,x,v_2)\right)
\end{aligned}
\end{equation}
The \emph{collision kernel} $\mathbf{b}$ is a function which depends on the
physical interaction between particles; pre-collisional and post-collisional
velocities are related by the following involutive transformation,
for $v,v_2 \in \mathbb{R}^d$ and fixed $\omega \in \mathbb{S}^{d-1}$:
\begin{equation*}
\begin{aligned}
v^* & = v + \left( \omega \cdot \left( v_2-v\right)\right)\omega\\
v_2^* & = v_2 - \left(\omega\cdot\left(v_2-v\right)\right)\omega
\end{aligned}
\end{equation*}

The most general known solutions of 
Boltzmann's equation are the  \emph{renormalized solutions} of \cite{DPL1989},
which exist 
globally in time for arbitrary data $f_0$ having finite mass,
second moments and entropy:
\begin{equation}\label{intro-ma-en-en}
\int_{\R^d \times \R^d} f_0(x,v) \, \left( 1 + |v|^2 + |x|^2 +
\log(1+f_0(x,v))\right) \, dx dv < +\infty.
\end{equation}
However, renormalized solutions have
many limitations; for instance, they are not known to solve the
Boltzmann equation in the usual distributional sense (which makes them
difficult to manipulate), nor are they known to be unique, nor are they
known to conserve energy. A different and very fruitful line of
investigation considers solutions close to an equilibrium distribution
of fixed temperature, see e.g.
 \cite{Uk1974,GS2011,AMUXY2011,Du2008,Gu2003,UY2006}.
 These solutions exist globally in
time and enjoy uniqueness and continuous dependence in appropriate
functional spaces; however, the theory only applies in a small neighborhood
of equilibrium. 

Henceforth we will not concern ourselves with the
(very difficult) problem of global well-posedness for Boltzmann's
equation. Instead we will be interested in the 
\emph{local} theory of well-posedness. Generally this means we want
to prove existence, uniqueness and continuous dependence of solutions,
locally in time and for large data,
with regularity as low as possible. See \cite{KS1984,Ar2011,AMUXY-Loc}
 for some existing theories of
local solutions for Boltzmann's equation. We especially refer to
Remark 1 of of \cite{AMUXY-Loc}, which provides (in the case of Grad
cut-off) a large data local well-posedness result which parallels
our Theorem \ref{thm:BE-LWP} when $\alpha > \frac{d}{2}$ in 
$d=3$.\footnote{We
are able to prove a \emph{conditional} local well-posedness result
when $f_0$ is in a weighted version of
$L^2_v H^\alpha_x$ 
with $\alpha > \frac{d-1}{2}$ (here conditional means that
uniqueness only holds 
assuming some auxiliary estimate satisfied by the constructed
solution). It is conceivable that the uniqueness is unconditional
when $\alpha > \frac{d}{2}$, 
cf. \cite{AMUXY-Loc}, but we do not pursue this issue.}
Our main intention, however, is not to investigate optimal regularity
spaces for solving Boltzmann's equation.
Rather, we intend to
demonstrate the close connection between Boltzmann's equation
and nonlinear Schr{\" o}dinger equations (NLS) in the density matrix
formulation; this connection has been
recognized implicitly for some time, but we wish to make it
quite explicit and to the best of our knowledge this is the first time 
such an explicit connection has been established.\footnote{We emphasize
that we \emph{do not} make use of any semiclassical limit.}
 The local well-posedness theory for NLS is by now
very mature and it is our hope that some tools which have been useful
for NLS will turn out
to be applicable to the corresponding problem for Boltzmann's equation.
If the theory can be made precise enough, it may turn out to be useful
for such problems as global well-posedness or the derivation of
Boltzmann's equation from deterministic particle systems.

Besides providing a new approach to proving local well-posedness for
Boltzmann's equation, we will also prove new results concerning the
\emph{Boltzmann hierarchy} for at least some collision kernels. 
The Boltzmann hierarchy is an infinite hierarchy of coupled PDE which
describes a gas of infinitely many particles, possibly accounting for
correlations between particles. For some class of collisional kernels, 
the Boltzmann hierarchy appears in the derivation of Boltzmann's equation
from classical system of many particles. See e.g. \cite{L1975,GSRT2014}. 
The connection between the Boltzmann
hierarchy and Boltzmann equation lies in the fact that the Boltzmann hierarchy admits a class of
factorized solutions with each factor being a solution to the Boltzmann equation.

The classical local well-posedness
result for the Boltzmann hierarchy is due to Lanford \cite{L1975}, who
assumes $L^\infty$ bounds on the initial data. Our results establish local
well-posedness in a functional setting much different than Lanford's;
in particular, we can work with
spaces that do not embed locally into $L^\infty$ in any variable.
Unfortunately we cannot report any new results concerning the Boltzmann
hierarchy for hard spheres; this is the topic of ongoing research.

The idea at the heart of our proofs is to take the inverse Wigner 
transform of Boltzmann's equation (resp. the Boltzmann hierarchy).
The transport operator 
$$\left(\partial_t + v\cdot\nabla_x\right)$$
is transformed into the linear Schr{\" o}dinger operator
$$\left( i \partial_t + \frac{1}{2} \left( \Delta_x - \Delta_{x^\prime} \right)\right),$$ 
and the nonlinear operator $Q(f,f)$ becomes a new
operator $B(\gamma,\gamma)$.
This puts us in
a situation where we can prove a bilinear estimate of the similar type
as the one proved in the work of Klainerman and
Machedon \cite{KM2008}. Subsequently we can employ
the iteration method inspired by the one of Chen and Pavlovi{\' c} \cite{PC2010};
these methods were originally devised for proving the local well-posedness of
the Gross-Pitaevskii hierarchy. In this paper we implement them at the level of the 
transformed Boltzmann equation as well as at the level of the transformed Boltzmann hierarchy.\footnote{in which case we also use the boardgame combinatorial 
argument as presented by Klainerman and Machedon \cite{KM2008}, which is a
reformulation of the combinatorial methods of Erd{\" o}s, Schlein and Yau,
\cite{ES2001,ESY2006,ESY2007}.} The main point that we make here is that the transformed Boltzmann equation 
becomes a nonlinear Schr\"{o}dinger equation, and the transformed Boltzmann hierarchy becomes a Schr\"{o}dinger 
type hierarchy (usually called Gross-Pitaevskii hierarchy) with nonlinearities that encode information about the interaction between particles 
encoded in the Boltzmann collision kernels. Once we are at the level of such nonlinear   Schr\"{o}dinger equation/hierarchy, 
we develop tools and emloy techniques for local well-posedness inspired by tools and techniques that have been recently introduced in the context of the Gross-Pitaevskii hierarchy. 

\subsection*{Organization of the paper}
Section \ref{sec:2} describes in detail the main results we will
prove, using the Wigner transform. 
Section \ref{sec:key} gives the
proof of a crucial proposition which is used to prove all our results,
and constitutes the main technical contribution
of the paper.  
Section \ref{sec:3} is devoted to
the proof of local well-posedness for the Boltzmann equation; this
result extends to cutoff Maxwell molecules, hard spheres, and
variable hard sphere models. Section \ref{sec:4} gives a brief outline
of the proof of local well-posedness for the Boltzmann hierarchy, including
the case of cutoff Maxwell molecules (but not hard spheres).

\subsection*{Acknowledgements}
\label{sec:acknowledgements}

The work of T.C. is supported by NSF grants DMS-1151414 (CAREER)
and DMS-1262411.
R. D. gratefully acknowledges support from a postdoctoral fellowship
at the University of Texas at Austin.
The work of N.P. is supported by NSF grant DMS-1516228.

\section{Main Results}
\label{sec:2}

\subsection{Notation and preliminaries} 

Given a function $f(x,v) \in L^2_{x,v}$ we define its inverse
Wigner transform $\gamma (x,x^\prime) \in L^2_{x,x^\prime}$ by the
following formula:
\begin{equation}
\gamma (x,x^\prime) = \int_{\mathbb{R}^d} f \left(\frac{x+x^\prime}{2},
v\right) e^{iv\cdot (x-x^\prime)} dv
\end{equation}
The inverse of the inverse Wigner transform is the usual Wigner
transform, namely:
\begin{equation}
f(x,v) = \frac{1}{(2\pi)^d} \int_{\mathbb{R}^d}
\gamma \left( x+\frac{y}{2},x-\frac{y}{2}\right)
e^{-iv\cdot y} dy
\end{equation}
All of our main results will be stated in terms of $\gamma$; in particular,
if we say $f(t)$ satisfies Boltzmann's equation, we mean that $\gamma(t)$
solves the Duhamel formula associated with the inverse Wigner transform
of the Boltzmann equation.

\begin{remark}
Note that if $\gamma (x,x^\prime) = \overline{\gamma (x^\prime,x)}$ for
all $x,x^\prime \in \mathbb{R}^d$,
then $f$ is everywhere real-valued; the converse also holds. In particular,
it is easy to check on the inverse Wigner side that $f$ is real-valued.
It is much less simple to determine whether $f$ is non-negative, and this
is an issue we do not address in the present work.
\end{remark}

Throughout the paper, we will assume that
$0 \leq \mathbf{b} \in L^\infty_A$ for some $A\in [0,1]$, where we have
defined
\begin{equation}
\left\Vert \mathbf{b} \right\Vert_{L^\infty_A} = 
\sup_{u \in \mathbb{R}^d,\; \omega\in\mathbb{S}^{d-1}}
\frac{\left|\mathbf{b}\left(|u|,\omega\cdot\frac{u}{|u|}\right)\right|}
{1+|u|^A}
\end{equation}
We will require the Fourier transform of the collision kernel, which
is written
\begin{equation}
\hat{\mathbf{b}}^\omega (\xi) = \int_{\mathbb{R}^d}
\mathbf{b} \left(|u|,\omega\cdot\frac{u}{|u|}\right)
e^{-iu\cdot \xi} du
\end{equation}
Note that $\hat{\mathbf{b}}^\omega$ is a tempered distribution in general.
Special cases include  $\mathbf{b}\equiv 1$ with $A=0$ (Maxwell molecules
with angular cut-off), $\mathbf{b}=\left[\omega\cdot u \right]_+$ with
$A=1$ (hard spheres),
and $0<A<1$ for variable hard sphere models. Not all results will apply
for the full range $A\in [0,1]$.

We introduce the weighted Sobolev spaces which define our functional
setting. Let $\hat{\gamma}$ denote the Fourier transform of
$\gamma$:
\begin{equation}
\hat{\gamma} (\xi,\xi^\prime) = \int_{\mathbb{R}^d\times\mathbb{R}^d}
e^{-ix\cdot \xi} e^{-i x^\prime \cdot \xi^\prime}
\gamma(x,x^\prime) dx dx^\prime
\end{equation}
Then, for any $\alpha,\beta,\kappa \geq 0$, and any $\sigma > 0$,
\begin{equation}
\left\Vert \gamma (x,x^\prime) \right\Vert_{H^{\alpha,\beta,\sigma,\kappa}}
= \left\Vert \left< \xi+\xi^\prime\right>^\alpha
\left<\xi-\xi^\prime\right>^\beta
e^{\kappa \left< \xi-\xi^\prime\right>^{\frac{1}{\sigma}}}
\hat{\gamma} (\xi,\xi^\prime)\right\Vert_{L^2_{\xi,\xi^\prime}}
\end{equation}
Note that this norm is equivalent to the following norm for the
classical densities:
\begin{equation}
\begin{aligned}
\left\Vert \left< 2v\right>^\beta 
e^{\kappa \left<2v\right>^{\frac{1}{\sigma}}}
\left( 1-\Delta_x\right)^{\frac{\alpha}{2}} f(x,v)
\right\Vert_{L^2_{x,v}}
\end{aligned}
\end{equation}

\begin{remark}
We emphasize that we can allow $\kappa = 0$ for some of our
results, e.g. the case of cutoff Maxwell molecules. We always
require $\kappa > 0$ in the case of hard spheres.
\end{remark}

\subsection{Warm-up: Free transport.}

We present a few brief remarks on the free transport equation before
turning to our main results. The  main point we wish to make is that
if $f(t,x,v)$ solves the equation
\begin{equation}
\label{eq:FTR}
\left( \partial_t + v \cdot \nabla_x\right) f = 0
\end{equation}
then the inverse Wigner transform $\gamma (t,x,x^\prime)$ 
satisfies the following linear Schr{\" o}dinger equation:
\begin{equation}
\label{eq:LSCH}
\left( i \partial_t + \frac{1}{2} \left(
\Delta_x - \Delta_{x^\prime} \right) \right) \gamma (t,x,x^\prime) = 0
\end{equation}
We emphasize that this correspondence does not rely on any
semiclassical limit.

\begin{example}
If $\gamma (t,x,x^\prime) =
e^{i k \cdot (x-x^\prime)}$ for some $k \in \mathbb{R}^d$,
then $\gamma$ solves (\ref{eq:LSCH}) and
$f (t,x,v) = c \delta(v-k)$ solves (\ref{eq:FTR}).
\end{example}

\begin{example}
If $\gamma(t,x,x^\prime) = |t|^{-d}
e^{i(|x|^2 - |x^\prime|^2)/(2t)}$, then
$\gamma$ solves (\ref{eq:LSCH}) and $f(t,x,v) = c \delta(x-vt)$
solves (\ref{eq:FTR}).
\end{example}

\begin{example}
If $f(t,x,v) = \delta(x-vt) \delta(v-v_0)$, for a fixed
$v_0 \in \mathbb{R}^d$, then $f$ solves (\ref{eq:FTR}); moreover,
the classical state (position and velocity) is known exactly.
In any case,
$\gamma$ exists as a distribution; for any $u(t,x,x^\prime) \in 
\mathcal{C}_0^\infty (\mathbb{R} \times \mathbb{R}^d \times
\mathbb{R}^d)$ we have
\begin{equation}
\begin{aligned}
& \int_{\mathbb{R} \times \mathbb{R}^d \times \mathbb{R}^d}
\gamma (t,x,x^\prime) u (t,x,x^\prime) dt dx dx^\prime = \\
&\qquad \qquad \qquad \qquad \qquad 
 = c \int_{\mathbb{R} \times \mathbb{R}^d}
e^{2 i v_0 \cdot z} u(t, v_0 t + z, v_0 t - z) dz dt
\end{aligned}
\end{equation}
Equivalently, $\gamma (t,x,x^\prime) =
\delta \left( \frac{x+x^\prime}{2} - v_0 t\right)
e^{i v_0 \cdot (x-x^\prime)}$. If $v_0 = 0$ then $\gamma$ obviously
solves (\ref{eq:LSCH}); by a Galilean shift, $\gamma$ solves
(\ref{eq:LSCH}) for arbitrary $v_0 \in \mathbb{R}^d$. Therefore, the
``fundamental solution'' for (\ref{eq:FTR}) transforms into 
a solution of (\ref{eq:LSCH}) under the inverse Wigner transform.  
We conclude that \emph{any} classical state (evolving under free transport)
 can be represented by
a distribution $\gamma (t,x,x^\prime)$ (evolving via a linear 
Schr{\" o}dinger equation).
Let us also point out that the inverse Wigner transform regarded as
a map
$f \in L^2_{x,v} \mapsto \gamma \in L^2_{x,x^\prime}$ is an
isometric isomorphism; and, these spaces are preserved by either
(\ref{eq:FTR}) or (\ref{eq:LSCH}) respectively. Hence the equivalence of
(\ref{eq:FTR}) and (\ref{eq:LSCH}) is reflected at the $L^2$ level
of regularity.
\end{example}

\begin{remark}
If 
$\phi (t,x)$ solves the Schr{\" o}dinger equation
\begin{equation}
\left( i \partial_t + \frac{1}{2} \Delta_x \right) \phi (t,x)
= 0
\end{equation} 
then the function $\gamma (t,x,x^\prime) = 
\phi (t,x) \overline{\phi(t,x^\prime)}$ solves
(\ref{eq:LSCH}) and the Wigner transform  $f$ solves
(\ref{eq:FTR}) (though $f$ in this case \emph{need not} be 
non-negative).
\end{remark}

We now prove the equivalence of (\ref{eq:FTR}) and 
(\ref{eq:LSCH}) at the $L^2$ level of regularity. (The same
result holds if $f,\gamma$ are tempered distributions, and
the proof is the same.)
\begin{lemma}
Suppose 
\begin{equation}
f \in L^1 \left( [0,T], L^2 (\mathbb{R}^d_x \times
\mathbb{R}^d_v) \right)
\end{equation}
and let 
\begin{equation}
\gamma \in L^1 \left( [0,T],L^2
(\mathbb{R}^d_x \times \mathbb{R}^d_{x^\prime})\right)
\end{equation}
denote the
inverse Wigner transform of $f$. Then $f$ solves
\begin{equation}
\left( \partial_t + v \cdot \nabla_x\right) f = 0
\end{equation}
in in the sense of distributions, if and only if $\gamma$
solves
\begin{equation}
\left( i \partial_t + \frac{1}{2} \left(
\Delta_x - \Delta_{x^\prime} \right) \right) \gamma (t,x,x^\prime) = 0
\end{equation}
in the sense of distributions.
\end{lemma}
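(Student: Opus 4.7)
The plan is to reduce both PDE to the same algebraic equation by passing to the Fourier side in space.

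First, I would take the partial spatial Fourier transform in $x$ (only) of the free transport equation, which transforms $(\partial_t + v\cdot \nabla_x) f = 0$ into
\begin{equation*}
\partial_t \widehat{f}(t,\zeta,v) + i (v\cdot \zeta)\, \widehat{f}(t,\zeta,v) = 0
\end{equation*}
in the sense of distributions, where $\widehat{f}$ denotes the partial Fourier transform in $x$ alone. Similarly, the full spatial Fourier transform in $(x,x')$ turns the Schr\"odinger-type equation into
\begin{equation*}
i \partial_t \widehat{\gamma}(t,\xi,\xi') + \tfrac{1}{2} \bigl( |\xi'|^2 - |\xi|^2\bigr) \widehat{\gamma}(t,\xi,\xi') = 0 .
\end{equation*}

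Next, I would relate the two Fourier-side objects through the defining formula for $\gamma$. Carrying out the change of variables $X = (x+x')/2$, $y = x-x'$ in the integral defining $\gamma$ and then integrating out $y$ yields the key identity
\begin{equation*}
\widehat{\gamma}(t,\xi,\xi') = (2\pi)^d \, \widehat{f}\!\left(t,\,\xi+\xi',\,\tfrac{\xi-\xi'}{2}\right).
\end{equation*}
Setting $\zeta = \xi+\xi'$ and $v = (\xi-\xi')/2$, one computes $v \cdot \zeta = \tfrac{1}{2}(|\xi|^2 - |\xi'|^2)$, and the two Fourier-side equations become literally the same equation, pulled back under this affine change of variables (whose Jacobian is constant). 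Equivalence of the two PDE then follows.

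The only subtlety is to justify these manipulations at the distributional level under the standing assumption $f \in L^1_t L^2_{x,v}$. For this I would appeal to the fact, which falls out of Plancherel applied to the change of variables above, that $f \mapsto \gamma$ is, up to a constant depending only on $d$, a unitary isomorphism $L^2_{x,v} \to L^2_{x,x'}$; consequently $\gamma \in L^1_t L^2_{x,x'}$, and both partial Fourier transforms above exist as $L^1_t L^2$ functions, so each PDE can be tested against arbitrary Schwartz functions on space--time. I do not anticipate a genuine obstacle here --- the proof is essentially bookkeeping for partial Fourier transforms --- and the parenthetical case of tempered distributions is handled identically, since the partial Fourier transform in $x$, the affine change of variables $(\zeta,v) \leftrightarrow (\xi,\xi')$, and multiplication by the polynomial $v\cdot \zeta$ all act naturally on $\mathcal{S}'$.
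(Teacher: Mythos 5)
Your proof is correct but takes a genuinely different route from the paper. The paper works entirely on the physical side: it differentiates the defining integral $\gamma(t,x,x') = \int f(t,\tfrac{x+x'}{2},v)\,e^{iv\cdot(x-x')}\,dv$ in $t$, inserts $\partial_t f = -v\cdot\nabla_x f + g$, and then manipulates the transport term via the identities $\nabla_x\!\left[f(\tfrac{x+x'}{2},v)\right] = \tfrac{1}{2}(\nabla_x+\nabla_{x'})\!\left[f(\tfrac{x+x'}{2},v)\right]$ and $iv\,e^{iv\cdot(x-x')} = \tfrac{1}{2}(\nabla_x - \nabla_{x'})\,e^{iv\cdot(x-x')}$ to pull out $-\tfrac{1}{2}(\Delta_x - \Delta_{x'})\gamma$, finishing with the observation that $g=0$ iff its inverse Wigner transform is zero. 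You instead Fourier transform both equations and exploit the identity $\widehat{\gamma}(t,\xi,\xi') = (2\pi)^d\,\widehat{f}(t,\xi+\xi',\tfrac{\xi-\xi'}{2})$, under which the two Fourier-side multipliers $v\cdot\zeta$ and $\tfrac{1}{2}(|\xi|^2 - |\xi'|^2)$ literally coincide. Both arguments are sound and about equally short; the paper's physical-side computation directly yields the inhomogeneous version (with forcing term $g$) that is reused for the collision operator in Appendix A, while your Fourier-side version more cleanly isolates the structural fact that the inverse Wigner transform is an affine change of Fourier variables under which the transport and Schr\"odinger symbols become identical, which is arguably the conceptual heart of the whole Wigner-transform approach in this paper.

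One small point worth flagging but not a gap: you should double-check the sign/convention in $v\cdot\zeta = \tfrac{1}{2}(|\xi|^2 - |\xi'|^2)$ against the sign appearing after Fourier transforming $\tfrac{1}{2}(\Delta_x - \Delta_{x'})$ (which contributes $-\tfrac{1}{2}(|\xi|^2 - |\xi'|^2)$) and the factor of $i$ from $i\partial_t$; tracing these through does give agreement, but it is exactly the kind of place where a dropped sign silently breaks the equivalence, so it deserves an explicit line in a full write-up rather than being left to the reader.
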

\begin{proof}
Assume that
\begin{equation}
\left( \partial_t + v\cdot \nabla_x\right) f = g
\end{equation}
Using the definition of the inverse
Wigner transform
we have
\begin{equation}
\begin{aligned}
& i \partial_t \gamma (t,x,x^\prime) =
\int_{\mathbb{R}^d} i \partial_t f \left( 
t,\frac{x+x^\prime}{2},v\right) e^{i v\cdot (x-x^\prime)} dv \\
& \qquad = \int_{\mathbb{R}^d} i
\left( - v \cdot \nabla_x f + g\right)
\left( t, \frac{x+x^\prime}{2},v\right) e^{i v\cdot (x-x^\prime)} dv 
\end{aligned}
\end{equation} 
Let us focus on the transport term, $v\cdot \nabla_x f$. We have
\begin{equation}
\begin{aligned}
& \int_{\mathbb{R}^d} i \left( - v \cdot \nabla_x f\right)
\left( t,\frac{x+x^\prime}{2},v\right) e^{iv\cdot (x-x^\prime)} dv \\
& \qquad = - i \int_{\mathbb{R}^d} v \cdot
\left( \nabla_x + \nabla_{x^\prime}\right) \left[
f\left( t,\frac{x+x^\prime}{2},v\right) \right]
e^{i v \cdot (x-x^\prime)} dv \\
& \qquad = - \int_{\mathbb{R}^d} 
\left( \nabla_x + \nabla_{x^\prime}\right) \left[
f\left(t,\frac{x+x^\prime}{2},v\right)\right] \cdot
i v e^{i v \cdot (x-x^\prime)} dv \\
& \qquad = - \left( \nabla_x + \nabla_{x^\prime}\right)\cdot
\int_{\mathbb{R}^d} 
f\left(t,\frac{x+x^\prime}{2},v\right) 
i v e^{i v \cdot (x-x^\prime)} dv \\
& \qquad = - \left( \nabla_x + \nabla_{x^\prime}\right)\cdot
\int_{\mathbb{R}^d} 
f\left(t,\frac{x+x^\prime}{2},v\right) 
\frac{\nabla_x - \nabla_{x^\prime}}{2}
e^{i v \cdot (x-x^\prime)} dv \\
& \qquad = - \frac{1}{2}
\left( \nabla_x + \nabla_{x^\prime}\right)\cdot
\left( \nabla_x - \nabla_{x^\prime}\right) 
\int_{\mathbb{R}^d} 
f\left(t,\frac{x+x^\prime}{2},v\right) 
e^{i v \cdot (x-x^\prime)} dv \\
& \qquad = - \frac{1}{2}
\left( \Delta_x - \Delta_{x^\prime}\right)
\int_{\mathbb{R}^d} 
f\left(t,\frac{x+x^\prime}{2},v\right) 
e^{i v \cdot (x-x^\prime)} dv \\
& \qquad = - \frac{1}{2}
\left( \Delta_x - \Delta_{x^\prime}\right) \gamma (t,x,x^\prime) \\
\end{aligned}
\end{equation}
Therefore,
\begin{equation}
\left( i \partial_t + \frac{1}{2} \left( \Delta_x - \Delta_{x^\prime}
\right) \right) \gamma (t,x,x^\prime) =
\int_{\mathbb{R}^d} g \left( t,\frac{x+x^\prime}{2},v\right)
e^{i v \cdot(x-x^\prime)} dv
\end{equation}
But $g$ vanishes identically if and only if its inverse Wigner transform
vanishes identically. Therefore, $\gamma$ solves
(\ref{eq:LSCH}) if and only if $f$ solves (\ref{eq:FTR}).
\end{proof}

\subsection{The main result for the Boltzmann equation} 

It is possible to compute explicitly the equation satisfied by
$\gamma$ if the Wigner transform $f$ is smooth with rapid decay
and satisfies Boltzmann's
equation, (\ref{eq:boltz}); see Corollary \ref{cor:App-BE-TR}
and Corollary \ref{cor:App-BE-Coll} in Appendix \ref{sec:appendix-A}.
The result is as follows:
\begin{equation}
\label{eq:boltz-gamma}
\left( i \partial_t + \frac{1}{2} \left(
\Delta_x - \Delta_{x^\prime}\right)\right) \gamma (t)
= B \left( \gamma(t),\gamma(t)\right)
\end{equation}
\begin{equation}
B(\gamma_1,\gamma_2) = 
B^+ (\gamma_1,\gamma_2) - 
B^- (\gamma_1,\gamma_2)
\end{equation}
\begin{equation}
\begin{aligned}
& B^- (\gamma_1,\gamma_2) (x,x^\prime) = 
\frac{i}{2^{2d} \pi^d} \int_{\mathbb{S}^{d-1}} d\omega
\int_{\mathbb{R}^d} dz \hat{\mathbf{b}}^\omega
\left( \frac{z}{2}\right) \times\\
& \qquad  
\times \gamma_1\left(x-\frac{z}{4},x^\prime+\frac{z}{4}\right)
\gamma_2\left(\frac{x+x^\prime}{2}+\frac{z}{4},
\frac{x+x^\prime}{2}-\frac{z}{4}\right)
\end{aligned}
\end{equation}
\begin{equation}
\begin{aligned}
& B^+ (\gamma_1,\gamma_2) (x,x^\prime) = 
\frac{i}{2^{2d} \pi^d} \int_{\mathbb{S}^{d-1}} d\omega
\int_{\mathbb{R}^d} dz \hat{\mathbf{b}}^\omega
\left( \frac{z}{2}\right) \times\\
& \qquad  
\times \gamma_1\left(x-\frac{1}{2} P_\omega (x-x^\prime)-
\frac{R_\omega(z)}{4},
x^\prime+\frac{1}{2}P_\omega (x-x^\prime)+\frac{R_\omega (z)}{4}\right)
\times \\
& \qquad \times \gamma_2\left(\frac{x+x^\prime}{2}
+\frac{1}{2} P_\omega (x-x^\prime) +\frac{R_\omega (z)}{4},
\frac{x+x^\prime}{2}-\frac{1}{2}
P_\omega (x-x^\prime) -\frac{R_\omega (z)}{4}\right)
\end{aligned}
\end{equation}
where we define
\begin{equation}
P_\omega (x) = \left( \omega \cdot x\right) \omega
\end{equation}
\begin{equation}
R_\omega (x) = \left( \mathbb{I} - 2 P_\omega\right) (x)
\end{equation}
and $\mathbb{I} (x) = x$. 
Solutions of Boltzmann's equation (in the $\gamma$ formulation)
 are understood using Duhamel's
formula:
\begin{equation}
\gamma(t) = e^{\frac{1}{2} i t \Delta_{\pm}} \gamma(0) - i
\int_0^t e^{\frac{1}{2}
i (t-t_1) \Delta_{\pm}}  B(\gamma(t_1),\gamma(t_1)) dt_1
\end{equation}
Here $\Delta_{\pm} = \Delta_x - \Delta_{x^\prime}$.

We are now ready to state our first main result.
\begin{theorem}
\label{thm:BE-LWP}
Suppose $A\in[0,1]$, $\alpha \in \left(\frac{d-1}{2},\infty\right)$,
$\beta \in \left(d,\infty\right)$,
$\kappa \in \left( 0,\infty\right)$,
 and additionally $\frac{1}{\sigma} \in \left(\max (0,2A-1), 2\right]$;
fix any $\lambda \in\left( 0,\infty\right)$. Consider the Boltzmann equation 
(\ref{eq:boltz-gamma}) with $\mathbf{b}\in L^\infty_A$. For any
$\gamma_0 \in H^{\alpha,\beta,\sigma,\kappa}$ there exists a unique
solution $\gamma(t)$ of Boltzmann's equation on a small time interval
$[0,T]$ such that
\begin{equation}
\left\Vert \left\Vert
\gamma(t)\right\Vert_{H^{\alpha,\beta,\sigma,\kappa-\lambda t}}
\right\Vert_{L^\infty_{t\in [0,T]}} < \infty
\end{equation}
and
\begin{equation}
\left\Vert \left\Vert
B(\gamma(t),\gamma(t))\right\Vert_{H^{\alpha,\beta,\sigma,\kappa-\lambda t}}
\right\Vert_{L^1_{t\in [0,T]}} < \infty
\end{equation}
both hold, and $\gamma(0)=\gamma_0$. Moreover, for some 
$r\in [0,1)$ we have the following: if
$\left\Vert \gamma_0 \right\Vert_{H^{\alpha,\beta,\sigma,\kappa}}
\leq M$ then for all small enough $T$
depending only on  $\alpha,\beta,\kappa,\sigma,\lambda$ and $M$, 
there holds:
\begin{equation}
\label{eq:BE-est}
\begin{aligned}
& T^{\frac{1}{2} (1-r)}  \left\Vert \left\Vert \gamma(t) 
\right\Vert_{H^{\alpha,\beta,\sigma,\kappa-\lambda t}}
\right\Vert_{L^\infty_{t\in [0,T]}} +
\left\Vert \left\Vert B \left(\gamma(t),\gamma(t)\right)
\right\Vert_{H^{\alpha,\beta,\sigma,\kappa-\lambda t}}
\right\Vert_{L^1_{t\in [0,T]}} \leq \\
& \qquad \qquad \qquad \qquad \leq 
C(M,\alpha,\beta,\sigma,\kappa,\lambda)\times T^{\frac{1}{2} (1-r)} 
\left\Vert \gamma_0 \right\Vert_{H^{\alpha,\beta,\sigma,\kappa}} 
\end{aligned}
\end{equation}
If $A \in 
\left[0,\frac{1}{2} \right)$ then we may take $\lambda = 0$ and
$\kappa \in \left[ 0,\infty\right)$ and the same results hold, with the same
restrictions on $\alpha,\beta,\sigma$.
\end{theorem}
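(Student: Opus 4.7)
The plan is to recast (\ref{eq:boltz-gamma}) via Duhamel's formula as a fixed-point equation $\gamma = \mathcal{N}(\gamma)$ with
\begin{equation*}
\mathcal{N}(\gamma)(t) := e^{\frac{it}{2}\Delta_{\pm}}\gamma_0 - i\int_0^t e^{\frac{i(t-s)}{2}\Delta_{\pm}} B(\gamma(s),\gamma(s))\,ds,
\end{equation*}
and to solve it by the contraction mapping principle in a Banach space $X_T$ whose norm is the left-hand side of (\ref{eq:BE-est}):
\begin{equation*}
\|\gamma\|_{X_T} := T^{\frac{1}{2}(1-r)}\sup_{t\in[0,T]}\|\gamma(t)\|_{H^{\alpha,\beta,\sigma,\kappa-\lambda t}} + \int_0^T \|B(\gamma(t),\gamma(t))\|_{H^{\alpha,\beta,\sigma,\kappa-\lambda t}}\,dt.
\end{equation*}
The linear Schr\"odinger propagator $e^{\frac{it}{2}\Delta_{\pm}}$ has unimodular Fourier symbol $e^{-\frac{it}{2}(|\xi|^2-|\xi'|^2)}$ and therefore commutes with every weight appearing in the $H^{\alpha,\beta,\sigma,\kappa-\lambda t}$ norm; in particular the free-evolution piece $e^{\frac{it}{2}\Delta_{\pm}}\gamma_0$ is trivially controlled by $T^{\frac{1}{2}(1-r)}\|\gamma_0\|_{H^{\alpha,\beta,\sigma,\kappa}}$. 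The time-monotone analyticity radius $\kappa - \lambda t$ is tracked exactly as in the Chen--Pavlovi\'c scheme \cite{PC2010}: at each intermediate time $s \leq t$ the larger radius $\kappa - \lambda s$ is available, so a factor of $\lambda^{-1}$ can be traded for an extra power of $\langle\xi-\xi'\rangle^{1/\sigma}$ whenever needed.

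The heart of the argument is the bilinear Strichartz-type estimate, which I take to be the content of the key proposition in Section \ref{sec:key}: for some $r\in[0,1)$ depending on $(\alpha,\beta,\sigma,A)$,
\begin{equation*}
\bigl\|B(\gamma_1,\gamma_2)\bigr\|_{L^1_t([0,T];H^{\alpha,\beta,\sigma,\kappa-\lambda t})} \leq C\,T^{\frac{1}{2}(1-r)} \prod_{j=1}^{2}\sup_{t\in[0,T]}\|\gamma_j(t)\|_{H^{\alpha,\beta,\sigma,\kappa-\lambda t}}.
\end{equation*}
Granting this, the a priori bound
\begin{equation*}
\|\mathcal{N}(\gamma)\|_{X_T} \leq C\, T^{\frac{1}{2}(1-r)}\|\gamma_0\|_{H^{\alpha,\beta,\sigma,\kappa}} + C'\,T^{-\frac{1}{2}(1-r)}\|\gamma\|_{X_T}^2
\end{equation*}
and the analogous Lipschitz bound for differences $\mathcal{N}(\gamma_1)-\mathcal{N}(\gamma_2)$ are immediate. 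Choosing $T$ small enough in terms of $M=\|\gamma_0\|_{H^{\alpha,\beta,\sigma,\kappa}}$ so that $4CC'T^{\frac{1}{2}(1-r)}M < 1$ makes $\mathcal{N}$ a contraction on the closed ball $\{\|\gamma\|_{X_T} \leq 2CT^{\frac{1}{2}(1-r)}M\}$ in $X_T$; the unique fixed point is the desired local solution, and (\ref{eq:BE-est}) is read off directly from the fixed-point estimate. The uniqueness claim in $X_T$ is built into the contraction.

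The main obstacle is of course the bilinear estimate itself. Unwinding the definitions of $B^\pm$ on the Fourier side, the rotations $R_\omega$ and projections $P_\omega$ become orthogonal changes of variables on the momentum plane, and one obtains a representation of $\widehat{B(\gamma_1,\gamma_2)}(\xi,\xi')$ as an integral over $\omega \in \mathbb{S}^{d-1}$ and a splitting of momenta coupling two copies of $\hat\gamma_j$ against the multiplier $\hat{\mathbf{b}}^\omega$. Composing with the propagator $e^{\frac{i(t-s)}{2}\Delta_\pm}$ and estimating the $L^1_t$ norm reduces to a spacetime Fourier computation in which Cauchy--Schwarz against a $(d-1)$-dimensional resonance manifold furnishes the Strichartz-type gain of $|t|^{-r}$; the condition $\alpha > \tfrac{d-1}{2}$ is precisely what is needed to integrate this resonance, while $\beta > d$ controls the convolution in the $\xi-\xi'$ variables via the embedding $L^2_v \cdot L^2_v \hookrightarrow L^1_v$, and $1/\sigma > \max(0,2A-1)$ allows the exponential weight, after it is distributed to the two factors, to dominate the $(1+|u|^A)$ growth of $\mathbf{b}$. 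The decreasing radius $\kappa-\lambda t$ is needed only when $A \geq \tfrac{1}{2}$, to absorb an extra power of $\langle\xi-\xi'\rangle^{1/\sigma}$ arising in this weight exchange; when $A \in [0,\tfrac{1}{2})$, the polynomial weight $\langle 2v\rangle^\beta$ with $\beta > d$ alone absorbs $(1+|u|^A)$ by an elementary convolution argument in $v$, so one may take $\lambda = 0$ and $\kappa \geq 0$.
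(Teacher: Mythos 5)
Your overall strategy (Duhamel formula, contraction mapping in a norm combining $T^{\frac{1}{2}(1-r)}\sup_t\|\gamma(t)\|$ with $\int_0^T\|B(\gamma(t),\gamma(t))\|\,dt$, Proposition \ref{prop:spacetime-est} as the engine) is the same as the paper's, but there is a genuine gap at the step you flag as ``immediate.''

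The bilinear estimate you take as ``the content of the key proposition,''
\begin{equation*}
\bigl\|B(\gamma_1,\gamma_2)\bigr\|_{L^1_t([0,T];\,H^{\alpha,\beta,\sigma,\kappa-\lambda t})}
\;\leq\;
C\,T^{\frac{1}{2}(1-r)}\prod_{j=1}^{2}\sup_{t\in[0,T]}\bigl\|\gamma_j(t)\bigr\|_{H^{\alpha,\beta,\sigma,\kappa-\lambda t}},
\end{equation*}
is not what Proposition \ref{prop:spacetime-est} says, and it is not true for arbitrary time-dependent $\gamma_j$. The proposition bounds
$\bigl\|B^{\pm}_{i,k+1}\bigl[e^{\frac{1}{2}it\Delta_{\pm}^{(k+1)}}\gamma_0^{(k+1)}\bigr]\bigr\|_{L^2_t H^{\alpha,\beta,\sigma,\kappa}_k}$
only when the argument of $B$ is the \emph{free Schr\"odinger evolution of a fixed datum}; it is precisely the resonance-manifold averaging of that free flow that compensates for the trace loss (the evaluation of $\gamma^{(k+1)}$ on a lower-dimensional set in physical space, equivalently the loss of derivative in $\alpha$) hidden inside $B$. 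For a general $\gamma(t)$ this dispersive gain is simply absent, and a pointwise-in-time bound of $B$ from $H^{\alpha,\beta,\sigma,\kappa} \times H^{\alpha,\beta,\sigma,\kappa}$ to $H^{\alpha,\beta,\sigma,\kappa'}$ fails. Consequently, when you plug the bilinear estimate into the quadratic part of $\mathcal{N}(\gamma)$ you are applying a tool outside its hypotheses, and the a priori bound $\|\mathcal{N}(\gamma)\|_{X_T} \lesssim T^{\frac{1}{2}(1-r)}\|\gamma_0\| + T^{-\frac{1}{2}(1-r)}\|\gamma\|_{X_T}^2$ is unjustified.

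The paper's actual proof gets around this by introducing the auxiliary unknown $\zeta(t) = B(\gamma(t),\gamma(t))$ and iterating the Duhamel formula once more so that the fixed-point equation becomes a coupled system for the pair $(\gamma,\zeta)$. In the resulting expression (\ref{eq:Phi2}), the arguments of $B$ are always explicit free evolutions $e^{\frac{1}{2}i(t-t_1)\Delta_\pm}\gamma(t_1)$ and $e^{\frac{1}{2}i(t-t_1)\Delta_\pm}\zeta(t_1)$ (or $e^{\frac{1}{2}it\Delta_\pm}\gamma_0$), so Proposition \ref{prop:spacetime-est} applies directly at each fixed $t_1$. The time-decreasing analyticity radius $\kappa - \lambda t$ is then handled by a dyadic decomposition in $t - t_1$, trading the factor $(\kappa_0 - \kappa)^{-\frac{1}{2}r}$ from the proposition against the half-power of the dyadic time scale coming from Cauchy--Schwarz. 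Your sketch omits both the pair reformulation (which is what makes the proposition applicable at all) and the dyadic-in-time argument (which is what actually produces the exponent $T^{\frac{1}{2}(1-r)}$), and these are not cosmetic: without them the contraction cannot be closed using the tools the paper establishes.
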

\begin{remark}
If $A=0$ it is possible to optimize the proof of Theorem \ref{thm:BE-LWP}
and obtain the same
result, with $\lambda = 0$, for any $\kappa \in \left[0,\infty\right)$,
$\frac{1}{\sigma} \in (0,2]$, 
and $\alpha,\beta \in\left( \frac{d-1}{2},\infty\right)$.
We omit the details.
\end{remark}

\subsection{The main result for the Boltzmann hierarchy}

We now turn to the Boltzmann hierarchy. The Boltzmann hierarchy is an
infinite sequence of coupled PDEs describing the evolution of
densities $f^{(k)} \left(t,x_1,\dots,x_k,v_1,\dots,v_k\right)$ for
$k\in \mathbb{N} = \left\{ 1,2,3,\dots\right\}$. The densities
$f^{(k)}$ are assumed to be symmetric with respect to interchange
of particle indices.
The Boltzmann hierarchy arises as an intermediate equation in the
derivation of Boltzmann's equation from an underlying Hamiltonian
evolution of many particles, \cite{L1975,Ki1975,GSRT2014}.
We use the notation $X_k = (x_1,\dots,x_k)$ and, for $i\leq j$,
$X_{i:j} = (x_i,x_{i+1},\dots,x_j)$, and similarly for $V_k$ and
$V_{i:j}$. 
For each $k\in\mathbb{N}$, the $k$th equation of the Boltzmann
hierarchy is written:
\begin{equation}
\label{eq:Boltz-hierarchy}
\left( \partial_t + V_k \cdot \nabla_{X_k} \right) f^{(k)} (t,X_k,V_k) =
C_{k+1} f^{(k+1)} (t,X_k,V_k)
\end{equation}
where the collision operator $C_{k+1}$ is split into gain and loss
parts:
\begin{equation}
C_{k+1} f^{(k+1)} =\sum_{i=1}^k  C^+_{i,k+1} f^{(k+1)} - 
\sum_{i=1}^k C^-_{i,k+1} f^{(k+1)}
\end{equation}
The gain term is written
\begin{equation}
\begin{aligned}
& C^+_{i,k+1} f^{(k+1)} (t,X_k,V_k) = \\ 
& = \int_{\mathbb{R}^d \times \mathbb{S}^{d-1}}
dv_{k+1} d\omega \mathbf{b}
 \left( |v_{k+1}-v_i|, \omega\cdot\frac{v_{k+1}-v_i}
{|v_{k+1}-v_i|} \right) \times \\
& \qquad \qquad \qquad \qquad \times f^{(k+1)} (t,x_1,\dots,x_i,\dots,
x_k,x_i,v_1,\dots,v_i^*,\dots,v_k,v_{k+1}^*)
\end{aligned}
\end{equation}
where
\begin{equation}
\begin{aligned}
v_i^* & = v_i + P_\omega \left( v_{k+1} - v_i \right) \\
v_{k+1}^* & = v_{k+1} - P_\omega \left( v_{k+1} - v_i \right)
\end{aligned}
\end{equation}
Similarly for the loss term we have
\begin{equation}
\begin{aligned}
& C^-_{i,k+1} f^{(k+1)} (t,X_k,V_k) = \\ 
& = \int_{\mathbb{R}^d \times \mathbb{S}^{d-1}}
dv_{k+1} d\omega \mathbf{b}
 \left( |v_{k+1}-v_i|, \omega\cdot\frac{v_{k+1}-v_i}
{|v_{k+1}-v_i|} \right) \times \\
& \qquad \qquad \qquad \qquad \times f^{(k+1)} (t,x_1,\dots,x_i,\dots,
x_k,x_i,v_1,\dots,v_i,\dots,v_k,v_{k+1})
\end{aligned}
\end{equation}
Note carefully that the collision operators $C_{i,k+1}^{\pm}$ involve
the evaluation of $f^{(k+1)}$ along the hypersurface
$x_{k+1} = x_i$.

In exactly the same manner as for the Boltzmann equation, we define the Wigner
tranform and its inverse for multiple particles:
\begin{equation}
\label{eq:inv-wigner}
\gamma^{(k)} (t,X_k,X_k^\prime) = \int_{\mathbb{R}^{dk}}
f^{(k)} \left(t,\frac{X_k+X_k^\prime}{2},V_k\right)
e^{iV_k\cdot (X_k-X_k^\prime)} dV_k
\end{equation}
\begin{equation}
f^{(k)} (t,X_k,V_k) = \frac{1}{(2\pi)^{dk}}
\int_{\mathbb{R}^{dk}} \gamma^{(k)} \left( t,X_k+\frac{Y_k}{2},
X_k-\frac{Y_k}{2}\right)
e^{-iV_k\cdot Y_k} dY_k
\end{equation}
The Fourier transform of $\gamma^{(k)}$ is written
\begin{equation}
\begin{aligned}
& \hat{\gamma}^{(k)}
 (\xi_1,\dots,\xi_k,\xi_1^\prime,\dots,\xi_k^\prime)
= \\
& \qquad \qquad \qquad \qquad 
 = \int e^{-i \sum_{i=1}^k x_i\cdot \xi_i}
e^{-i \sum_{i=1}^k x_i^\prime \cdot \xi_i^\prime}
\gamma^{(k)} (X_k,X_k^\prime) dX_k dX_k^\prime
\end{aligned}
\end{equation}
Let us define the weighted Sobolev spaces $H^{\alpha,\beta,\sigma,\kappa}_k$
for $\alpha,\beta,\kappa\geq 0$ and $\sigma > 0$:
\begin{equation}
\begin{aligned}
& \left\Vert \gamma^{(k)} (X_k,X_k^\prime) 
\right\Vert_{H^{\alpha,\beta,\sigma,\kappa}_k} =\\
&\qquad = 
\left\Vert 
\begin{aligned}
& \prod_{i=1}^k \left\{ \left< \xi_i+\xi_i^\prime\right>^\alpha
\left< \xi_i-\xi_i^\prime\right>^\beta
e^{\kappa\left< \xi_i-\xi_i^\prime\right>^{\frac{1}{\sigma}}}\right\}\times\\
& \qquad \qquad \qquad \qquad
\times \hat{\gamma}^{(k)} (\xi_1,\dots,\xi_k,\xi_1^\prime,\dots,\xi_k^\prime)
\end{aligned}
\right\Vert_{L^2_{\xi_1,\dots,\xi_k,
\xi_1^\prime,\dots,\xi_k^\prime}}
\end{aligned}
\end{equation}
These norms are equivalent (up to a factor like $C^k$) to the following norms
for classical densities:
\begin{equation}
\begin{aligned}
 \left\Vert \prod_{i=1}^k \left\{ \left< 2v_i\right>^\beta 
e^{\kappa \left<2v_i\right>^{\frac{1}{\sigma}}}
\left( 1-\Delta_{x_i}\right)^{\frac{\alpha}{2}}\right\} f^{(k)}(X_k,V_k)
\right\Vert_{L^2_{X_k,V_k}}
\end{aligned}
\end{equation}
If $\Gamma = \left\{ \gamma^{(k)} \right\}_{k\in\mathbb{N}}$ and
$\xi > 0$ then we further define
\begin{equation}
\left\Vert \Gamma \right\Vert_{\mathcal{H}^{\alpha,\beta,\sigma,\kappa}_\xi}
= \sum_{k\in\mathbb{N}} \xi^{k} \left\Vert
\gamma^{(k)} (X_k,V_k)\right\Vert_{H^{\alpha,\beta,\sigma,\kappa}_k}
\end{equation}
Note that $\gamma \in H^{\alpha,\beta,\sigma,\kappa}$ if and only
if $\Gamma = \left\{ \gamma^{\otimes k}\right\}_{k\in\mathbb{N}} \in
\mathcal{H}^{\alpha,\beta,\sigma,\kappa}_\xi$ for some (arbitrary)
$\xi > 0$. 

The inverse Wigner transform of the Boltzmann hierarchy is:
(see Proposition \ref{prop:App-BH-TR} and Proposition
\ref{prop:App-BH-Coll} in Appendix \ref{sec:appendix-A})
\begin{equation}
\label{eq:BH-gamma}
\begin{aligned}
& \left(i \partial_t + \frac{1}{2} \left( \Delta_{X_k} - 
\Delta_{X_k^\prime}\right)
\right) \gamma^{(k)} (t,X_k,X_k^\prime) = B_{k+1} \gamma^{(k+1)} 
(t,X_k,X_k^\prime)
\end{aligned}
\end{equation}
\begin{equation}
\begin{aligned}
B_{k+1} \gamma^{(k+1)}
 = \sum_{i=1}^k \left( B_{i,k+1}^+ \gamma^{(k+1)}
 - B_{i,k+1}^- \gamma^{(k+1)}\right) 
\end{aligned}
\end{equation}
\begin{equation}
\begin{aligned}
& B_{i,k+1}^- \gamma^{(k+1)} (t,X_k,X_k^\prime) = \\
& = \frac{i}{2^{2d} \pi^d} \int_{\mathbb{S}^{d-1}} d\omega
\int_{\mathbb{R}^d}  dz \;
 \hat{\mathbf{b}}^\omega \left(\frac{z}{2}\right)\times \\
& \times \gamma^{(k+1)} \left( t,X_{1:(i-1)},x_i-\frac{z}{4},X_{(i+1):k},
\frac{x_i+x_i^\prime}{2}+\frac{z}{4}, \right. \\
& \qquad \qquad \qquad \qquad \qquad 
\left. X_{1:(i-1)}^\prime, x_i^\prime + \frac{z}{4},X_{(i+1):k}^\prime,
\frac{x_i+x_i^\prime}{2}-\frac{z}{4} \right) \\
\end{aligned}
\end{equation}
\begin{equation}
\begin{aligned}
& B_{i,k+1}^+ \gamma^{(k+1)} (t,X_k,X_k^\prime) = \\
& = \frac{i}{2^{2d} \pi^d} 
\int_{\mathbb{S}^{d-1}} d\omega
\int_{\mathbb{R}^d} dz \; \hat{\mathbf{b}}^\omega
\left( \frac{z}{2}\right) \times \\
& \times  \gamma^{(k+1)} \left(
t,X_{1:(i-1)},x_i-\frac{1}{2} P_\omega
(x_i-x_i^\prime)-\frac{R_\omega (z)}{4}, X_{(i+1):k}, \right. \\
& \qquad \qquad \qquad \qquad \qquad \qquad \left.
\frac{x_i+x_i^\prime}{2}+\frac{1}{2} P_\omega
(x_i-x_i^\prime)+\frac{R_\omega (z)}{4},\right.\\
& \qquad \qquad  \left. 
X_{1:(i-1)}^\prime, x_i^\prime + \frac{1}{2}P_\omega (x_i-x_i^\prime)+
\frac{R_\omega (z)}{4}, X_{(i+1):k}^\prime, \right. \\
& \qquad \qquad \qquad \qquad \qquad \qquad \left.
\frac{x_i+x_i^\prime}{2}-\frac{1}{2}P_\omega
(x_i-x_i^\prime)-\frac{R_\omega (z)}{4}\right)  \\
\end{aligned}
\end{equation}
Solutions of the Boltzmann hierarchy are understood using
Duhamel's formula: for all $k\in\mathbb{N}$,
\begin{equation}
\gamma^{(k)} (t) = e^{\frac{1}{2} i t \Delta_{\pm}^{(k)}} 
\gamma^{(k)} (0) - i
\int_0^t e^{\frac{1}{2} i (t-t_1) \Delta_{\pm}^{(k)}}
B_{k+1} \gamma^{(k+1)} (t_1) dt_1
\end{equation}
Here $\Delta_{\pm}^{(k)} = \Delta_{X_k} - \Delta_{X_k^\prime}$.
We further define $B\Gamma = \left\{ B_{k+1} \gamma^{(k+1)}
\right\}_{k\in\mathbb{N}}$.

We are ready to state our second main result.
\begin{theorem}
\label{thm:BH-LWP}
Suppose $\mathbf{b}\in L^\infty_A$ with 
$A \in \left[ 0,\frac{1}{2}\right)$, $\alpha \in
\left( \frac{d-1}{2},\infty\right)$, $\beta \in (d,\infty)$,
$\kappa \in [0,\infty)$, and
$\frac{1}{\sigma} \in \left( 0,2\right]$.
Assume $\Gamma_0 = \left\{ \gamma^{(k)}_0\right\}_{k\in\mathbb{N}} \in
\mathcal{H}^{\alpha,\beta,\sigma,\kappa}_{\xi_1}$ where
$\xi_1 \in (0,1)$, and further assume that the functions
$\gamma^{(k)}_0$ are symmetric under particle interchange. 
Then there exists $T>0$ and $0 < \xi_2 < \xi_1$ such that there
exists a unique solution $\Gamma(t)$ of the Boltzmann hierarchy
(\ref{eq:BH-gamma}) for $t\in [0,T]$ with
$\left\Vert \Gamma(t) \right\Vert_{L^\infty_{t\in [0,T]}
\mathcal{H}^{\alpha,\beta,\sigma,\kappa}_{\xi_2}} < \infty$
and
$\left\Vert B \Gamma(t) \right\Vert_{L^1_{t\in [0,T]}
\mathcal{H}^{\alpha,\beta,\sigma,\kappa}_{\xi_2}} < \infty$,
and $\Gamma (0) = \Gamma_0$.
Moreover, the following estimate holds:
\begin{equation}
\left\Vert \Gamma
\right\Vert_{L^\infty_{t\in [0,T]}
\mathcal{H}^{\alpha,\beta,\sigma,\kappa}_{\xi_2}} +
\left\Vert B \Gamma 
\right\Vert_{L^1_{t\in [0,T]}
\mathcal{H}^{\alpha,\beta,\sigma,\kappa}_{\xi_2}} \leq
C
\left\Vert \Gamma_0
\right\Vert_{\mathcal{H}^{\alpha,\beta,\sigma,\kappa}_{\xi_1}}
\end{equation}
where $C$ depends on
$T, d, \xi_1, \xi_2, \alpha, \beta, \sigma, \kappa$.
\end{theorem}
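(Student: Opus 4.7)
The plan is to extend the strategy of Theorem \ref{thm:BE-LWP} to the infinite hierarchy, exploiting the fact that (\ref{eq:BH-gamma}) is \emph{linear} in $\Gamma = \{\gamma^{(k)}\}$. A fixed-point argument will be set up for the Duhamel map
\begin{equation*}
\mathcal{F}(\Gamma)^{(k)}(t) = e^{\frac{1}{2}it\Delta_\pm^{(k)}} \gamma_0^{(k)} - i\int_0^t e^{\frac{1}{2}i(t-s)\Delta_\pm^{(k)}} B_{k+1}\gamma^{(k+1)}(s)\, ds
\end{equation*}
on the space
\begin{equation*}
X_T = \bigl\{ \Gamma : \|\Gamma\|_{L^\infty_{t\in[0,T]} \mathcal{H}^{\alpha,\beta,\sigma,\kappa}_{\xi_2}} + \|B\Gamma\|_{L^1_{t\in[0,T]} \mathcal{H}^{\alpha,\beta,\sigma,\kappa}_{\xi_2}} < \infty \bigr\},
\end{equation*}
where the geometric gap $(\xi_2/\xi_1)^k$ between the data and solution radii will absorb the factor of $k$ produced when $B_{k+1}$ is expanded as a sum over $i = 1, \ldots, k$.

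The key input is the single-collision estimate developed in Section \ref{sec:key}. With $k-1$ spectator particles factoring cleanly through the tensor-product weights and the free Schr\"odinger evolution, the same proof that produces the estimate (\ref{eq:BE-est}) will yield, uniformly in $k$, in $1 \leq i \leq k$, and in the sign $\pm$,
\begin{equation*}
\bigl\| B_{i,k+1}^{\pm} \gamma^{(k+1)} \bigr\|_{L^1_{t\in[0,T]} H^{\alpha,\beta,\sigma,\kappa}_k} \leq C(\alpha,\beta,\sigma,\kappa)\, T^{\frac{1-r}{2}} \bigl\| \gamma^{(k+1)} \bigr\|_{L^\infty_{t\in[0,T]} H^{\alpha,\beta,\sigma,\kappa}_{k+1}}.
\end{equation*}
The restriction $A \in [0,\tfrac{1}{2})$ and the permission $\kappa \geq 0$ will enter precisely here: for $A \geq \tfrac{1}{2}$ the single-collision bound requires the time-dependent Gevrey weight $\kappa - \lambda t$ used in Theorem \ref{thm:BE-LWP}, and no single choice of $\lambda$ is compatible with the multi-level hierarchy norm $\mathcal{H}^{\alpha,\beta,\sigma,\kappa}_\xi$. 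Summing the per-collision bound over $i$ and over the two signs produces the prefactor $2k$, and the elementary bound $\sup_{k \geq 1} k (\xi_2/\xi_1)^k < \infty$ (valid since $\xi_2 < \xi_1$) then gives
\begin{equation*}
\|B\Gamma\|_{L^1_t \mathcal{H}^{\alpha,\beta,\sigma,\kappa}_{\xi_2}} \leq C'(\xi_1,\xi_2)\, T^{\frac{1-r}{2}}\, \|\Gamma\|_{L^\infty_t \mathcal{H}^{\alpha,\beta,\sigma,\kappa}_{\xi_1}}.
\end{equation*}

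From here the Duhamel formula is iterated: after $n$ iterations $\gamma^{(k)}(t)$ is expressed as a sum of terms each involving the initial datum $\gamma^{(k+n)}_0$ integrated against $n$ collision operators along a time simplex of volume $\sim T^n/n!$. The naive expansion produces on the order of $(k+n-1)!/(k-1)!$ summands, which is the main technical obstacle; the Klainerman--Machedon boardgame argument \cite{KM2008}, as adapted to the Chen--Pavlovi\'c iteration scheme \cite{PC2010}, collapses these into an exponentially bounded number of equivalence classes, each of which is estimated by $n$ applications of the single-collision bound. Together with the time-simplex factor and the geometric weights $\xi_2^{k}\xi_1^{-(k+n)}$ coming from the hierarchy norms, the resulting series converges absolutely in $\mathcal{H}^{\alpha,\beta,\sigma,\kappa}_{\xi_2}$ for $T$ sufficiently small, yielding simultaneously existence, uniqueness in $X_T$, and the claimed linear estimate. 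Symmetry of the solution under particle interchange is preserved since $B_{i,k+1}$ integrates out the bystander index $k+1$ and $B_{k+1} = \sum_i B_{i,k+1}$ is invariant under permutations of the first $k$ coordinates. The hard-sphere case $A = 1$ remains inaccessible for the reason noted above.
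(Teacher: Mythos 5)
Your proposal correctly identifies the high-level ingredients (geometric weight absorbing the factor $k$, Duhamel iteration, Klainerman--Machedon boardgame, $A<\tfrac12$ restriction tied to the absence of time-dependent Gevrey weights), but it contains a genuine error in the central estimate that makes the logic inconsistent.

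The claimed per-collision bound
\begin{equation*}
\bigl\| B_{i,k+1}^{\pm} \gamma^{(k+1)} \bigr\|_{L^1_{t\in[0,T]} H^{\alpha,\beta,\sigma,\kappa}_k} \leq C\, T^{\frac{1-r}{2}} \bigl\| \gamma^{(k+1)} \bigr\|_{L^\infty_{t\in[0,T]} H^{\alpha,\beta,\sigma,\kappa}_{k+1}}
\end{equation*}
is not a consequence of Proposition \ref{prop:spacetime-est}, nor is it true. Proposition \ref{prop:spacetime-est} bounds $B_{i,k+1}^{\pm}\bigl[e^{\frac12 it\Delta_\pm^{(k+1)}}\gamma_0^{(k+1)}\bigr]$ in $L^2_t H_k$ for a \emph{fixed} datum $\gamma_0^{(k+1)}$; the gain of regularity that compensates the restriction to the diagonal $x_{k+1}=x_i$ comes entirely from the dispersive smoothing of the free evolution $e^{\frac12 it\Delta_\pm^{(k+1)}}$ acting on the $(k+1)$-particle level. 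Applied pointwise in $t$ to an arbitrary time-dependent $\gamma^{(k+1)}(t)$, the operator $B_{i,k+1}$ is simply unbounded from $H_{k+1}$ to $H_k$ (the trace to $\{x_{k+1}=x_i\}$ costs roughly $d/2$ derivatives), so no $L^\infty_t\to L^1_t$ bound of this form can hold. Moreover, were the displayed bound correct, the entire Klainerman--Machedon machinery you invoke afterward would be superfluous: the fixed-point argument would close directly, and the boardgame argument exists precisely because such a pointwise-in-time bound is unavailable. Stating the bound and then immediately appealing to the boardgame is internally inconsistent.

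What is actually needed — and what the paper does — is to first pass to the auxiliary system for $(\Gamma, \Xi)$ with $\Xi=B\Gamma$, so that every occurrence of $B$ in the Duhamel formula is of the form $B\bigl[e^{\frac12 i(t-t_1)\hat\Delta_\pm}\,\cdot\,\bigr]$ with the free propagator inside. Iterating the closed equation for $\Xi$ then produces chains
$B_{k+1} e^{i s_1 \Delta^{(k+1)}_\pm} B_{k+2} e^{is_2\Delta^{(k+2)}_\pm} \cdots B_{k+n}e^{is_n\Delta^{(k+n)}_\pm}\gamma_0^{(k+n)}$,
and Proposition \ref{prop:spacetime-est} can be applied inductively to each consecutive $B\circ e^{itL}$ pair. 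The boardgame argument is then invoked to collapse the $\mathcal{O}((k+n)!/k!)$ arrangements of these chains into $\mathcal{O}(C^{k+n})$ equivalence classes, and the geometric weight $\xi_2<\xi_1$ sums the result. Your outline is recoverable if you drop the erroneous $L^\infty_t\to L^1_t$ bound and instead apply Proposition \ref{prop:spacetime-est} only where the free evolution is explicitly present in the iterated Duhamel expansion.
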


\begin{remark}
If $A=0$ it is possible to optimize the proof of Theorem \ref{thm:BH-LWP}
and obtain the same
result for any $\kappa \in [0,\infty)$, $\frac{1}{\sigma} \in (0,2]$, and
$\alpha,\beta \in \left( \frac{d-1}{2},\infty\right)$. We omit
the details.
\end{remark}

\subsection{Interpretation of the Boltzmann hierarchy}

Extending Theorem \ref{thm:BH-LWP} to the full range $A\in [0,1]$
would require revising the
boardgame argument as presented in \cite{KM2008} to be compatible with
time-dependent weights, as in Theorem \ref{thm:BE-LWP}. Unfortunately
this seems to be technically out of reach at the present time; indeed,
it seems to be an interesting open question to determine whether the
hard sphere Boltzmann hierarchy is in fact locally well-posed for data
$\Gamma (0) \in \mathcal{H}_\xi^{\alpha,\beta,\sigma,\kappa}$ with
a suitable choice of parameters.

Since we cannot (at present) extend our well-posedness result to
the hard sphere Boltzmann hierarchy
 ($A=1$), the reader will rightfully question why
we study the Boltzmann hierarchy at all. After all, the 
hard sphere interaction is
the \emph{only} interaction with Grad cut-off that is physically
relevant (and all our results assume the Grad cut-off). In particular,
at present, we have nothing to offer in the context of Lanford's theorem,
even at the level of the Boltzmann hierarchy.
Nevertheless, the Boltzmann hierarchy always has an interpretation
in the context of \emph{statistical solutions} of the Boltzmann equation.
(See \cite{CIP1994} for a formal discussion of statistical solutions.)
 Under suitable regularity assumptions,
if $\pi_t$ is a statistical solution
of Boltzmann's equation, then
\begin{equation}
f^{(k)} (t) = \int h^{\otimes k} d\pi_t (h)
\end{equation}
is a solution of the Boltzmann hierarchy (for any interaction, physical
or not). Conversely, suppose  the functions
$f^{(k)} (t)$ (assumed smooth and growing at most exponentially in $k$),
which solve the Boltzmann hierarchy,
 define the joint distribution of some
 exchangeable sequence of random variables
$(x_1,v_1),(x_2,v_2),\dots$. In that case,
 the Hewitt-Savage theorem
guarantees the existence of a unique underlying
$\pi_t$ which must be a statistical solution of Boltzmann's equation. 
\cite{HS1955}

\section{The key proposition} 
\label{sec:key}

The proofs of Theorems \ref{thm:BE-LWP} and \ref{thm:BH-LWP} will
rely on the following proposition:

\begin{proposition}
\label{prop:spacetime-est}
Suppose $A \in [0,1]$,
$\alpha \in \left(\frac{d-1}{2},\infty\right)$,
$\beta \in (d,\infty)$, and 
$\frac{1}{\sigma} \in \left( \max(0,2A-1),2\right]$. Then for
any $r\in [0,1)$ such that $\frac{r}{\sigma} \geq
\max (0,2A-1+\delta)$ for a small $\delta > 0$ we have for all
$\kappa_0 > \kappa > 0$, any $1\leq i \leq k$, and any
$\gamma_0^{(k+1)} \in H^{\alpha,\beta,\sigma,\kappa_0}_{k+1}$
 the following estimates:
\begin{equation}
\label{eq:spacetime-est-1}
\begin{aligned}
& \left\Vert B^{\pm}_{i,k+1} \left[ 
e^{\frac{1}{2} i t \left( \Delta_{X_{k+1}} - \Delta_{X_{k+1}^\prime}\right)}
\gamma_0^{(k+1)} \right] \right\Vert_{L^2_t 
H^{\alpha,\beta,\sigma,\kappa}_k } \leq \\
& \qquad \qquad \qquad   \leq
C(\alpha,\beta,\sigma,r) \left\Vert \mathbf{b} \right\Vert_{L^\infty_A}
\left( 1+\left(\kappa_0-\kappa\right)^{-\frac{1}{2}r}\right)
 \left\Vert \gamma_0^{(k+1)} 
\right\Vert_{H^{\alpha,\beta,\sigma,\kappa_0}_{k+1}}
\end{aligned}
\end{equation}
Moreover, if $A\in \left[ 0,\frac{1}{2}\right)$,
$\alpha \in \left(\frac{d-1}{2},\infty\right)$,
$\beta \in (d,\infty)$, and
$\frac{1}{\sigma} \in (0,2]$, then for any
$\kappa_0 \geq \kappa \geq 0$, any $1\leq i \leq k$, and any
$\gamma_0^{(k+1)} \in H^{\alpha,\beta,\sigma,\kappa_0}_{k+1}$
the following estimates hold:
\begin{equation}
\label{eq:spacetime-est-2}
\begin{aligned}
& \left\Vert B^{\pm}_{i,k+1} \left[ 
e^{\frac{1}{2} i t \left( \Delta_{X_{k+1}} - \Delta_{X_{k+1}^\prime}\right)}
\gamma_0^{(k+1)} \right] \right\Vert_{L^2_t 
H^{\alpha,\beta,\sigma,\kappa}_k } \leq \\
& \qquad \qquad \qquad\qquad \qquad \qquad \leq
C(\alpha,\beta,\sigma) \left\Vert \mathbf{b} \right\Vert_{L^\infty_A}
 \left\Vert \gamma_0^{(k+1)} 
\right\Vert_{H^{\alpha,\beta,\sigma,\kappa_0}_{k+1}}
\end{aligned}
\end{equation}
\end{proposition}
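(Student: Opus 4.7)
The plan is to prove the estimate by passing to the spacetime Fourier transform, in the spirit of the Klainerman--Machedon bilinear Strichartz estimates for the Gross--Pitaevskii hierarchy, adapted to the specific structure of the collision operators $B_{i,k+1}^\pm$. By the symmetry in the spectator particles $j\neq i$, $j\leq k$, it suffices to fix $i=1$ and treat a bilinear estimate in the variables $(x_1,x_1',x_{k+1},x_{k+1}')$; the spectator variables $X_{2:k},X'_{2:k}$ can be carried as parameters because the free Schr\"odinger evolution commutes with the collision operator in their indices, so after applying Plancherel in those frequencies the problem collapses to a genuinely four-variable (pair) estimate.

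Next I would write the solution as $\hat{\gamma}^{(k+1)}(t,\Xi,\Xi')=e^{-\tfrac{it}{2}(|\Xi|^2-|\Xi'|^2)}\hat{\gamma}_0^{(k+1)}(\Xi,\Xi')$, insert this into the explicit formula for $B_{1,k+1}^-$, and take the spacetime Fourier transform of the output. The $z$-integration against $\hat{\mathbf{b}}^\omega(z/2)$ localizes $\hat{\gamma}_0^{(k+1)}$ by an affine frequency constraint and a convolution against $\hat{\mathbf{b}}^\omega$, while the $t$-integration forces the output time frequency $\tau$ onto the paraboloid associated to the "external" frequencies $(\xi_{k+1},\xi'_{k+1})$. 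After Plancherel in $t$ and Cauchy--Schwarz, the bound reduces to an $L^\infty$ bound on a frequency-space kernel in which $(\xi_{k+1},\xi'_{k+1})$ are constrained to a $(d{-}1)$-dimensional paraboloidal surface for each fixed output frequency pair. Bounding this kernel by an integral of the form $\int_{\mathrm{par.}}\langle\xi+\xi'\rangle^{-2\alpha}\langle\xi-\xi'\rangle^{-2\beta}\,d\mu$ converges precisely under the assumptions $\alpha>\frac{d-1}{2}$ (the dispersive/curvature gain) and $\beta>d$ (the velocity moment weight). This is essentially the bilinear estimate of \cite{KM2008} repackaged for our collision operator.

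For the $B^+$ term, the same procedure applies but the reflection $R_\omega$ couples the angular variable $\omega\in\mathbb{S}^{d-1}$ to the momentum $z$ and to the frequencies $\xi_1-\xi'_1$. I would exploit the orthogonality identities $R_\omega^\top R_\omega=\mathbb{I}$ and $P_\omega^2=P_\omega$ to perform a volume-preserving change of variables in $\omega$ or in the shifted frequencies, which reduces the $B^+$ bound to the same paraboloidal kernel estimate as in the $B^-$ case, up to harmless reshuffling of frequencies; the $\langle\xi-\xi'\rangle^\beta$ weight is strong enough to survive this rearrangement because $\beta>d$.

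The main obstacle is the interplay between the growth of the collision kernel (for $A>0$, $\hat{\mathbf{b}}^\omega$ is only a tempered distribution encoding growth like $|u|^A$) and the Gevrey weight $e^{\kappa\langle\cdot\rangle^{1/\sigma}}$. The idea is to trade a sliver of the gap $\kappa_0-\kappa$ for a polynomial factor $\langle\cdot\rangle^{r/\sigma}$ via the elementary inequality $e^{-(\kappa_0-\kappa)\langle\xi\rangle^{1/\sigma}}\langle\xi\rangle^{r/\sigma}\leq C_r(\kappa_0-\kappa)^{-r}$, and then to absorb the resulting growth of $\hat{\mathbf{b}}^\omega$ into the paraboloid integral. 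The admissibility condition $\tfrac{r}{\sigma}\geq\max(0,2A-1+\delta)$ is exactly what makes the integral over the paraboloid convergent after this trade, and produces the factor $\bigl(1+(\kappa_0-\kappa)^{-r/2}\bigr)$ in (\ref{eq:spacetime-est-1}). When $A\in[0,\tfrac12)$ one may take $r=0$: the pointwise bound $|\hat{\mathbf{b}}^\omega(w)|\lesssim\|\mathbf{b}\|_{L^\infty_A}$ combined with $A<\tfrac12$ already provides an integrable kernel, so no Gevrey trade is needed, $\kappa=\kappa_0=0$ is allowed, and (\ref{eq:spacetime-est-2}) follows directly from the same paraboloid estimate.
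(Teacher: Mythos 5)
Your outline captures the correct high-level framework — spacetime Fourier transform, pointwise Cauchy--Schwarz, reduction to a uniform kernel bound, and trading a sliver of $\kappa_0-\kappa$ for a polynomial factor — and your treatment of the loss term $B^-_{i,k+1}$ matches the paper's argument in outline. But there are two genuine gaps in your treatment of the gain term $B^+_{i,k+1}$, and they are precisely where the substance of the proposition lies. The gain term cannot be reduced to the loss term by a volume-preserving change of variables as you claim. After the frequency-side change of variables, the reflection $R_\omega$ has the effect of splitting the input ``velocity'' frequency around $\xi_1-\xi_1'$ into its components parallel and perpendicular to $\omega$, so the two inner Gevrey weights carry arguments $4s_2^\Vert + \xi_1-\xi_1'$ and $-4s_2^\bot + \xi_1-\xi_1'$, while the output carries $e^{\kappa\langle\xi_1-\xi_1'\rangle^{1/\sigma}}$. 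The dangerous contribution is then the factor
\begin{equation*}
e^{2\kappa\left(\langle\xi_1-\xi_1'\rangle^{1/\sigma}-\langle 4s_2^\Vert+\xi_1-\xi_1'\rangle^{1/\sigma}-\langle-4s_2^\bot+\xi_1-\xi_1'\rangle^{1/\sigma}\right)},
\end{equation*}
which a priori could grow. Your proposal has no mechanism to control this; the paper shows the exponent is $\leq 0$ by using that $u\mapsto\langle u\rangle^{1/\sigma}$ is superadditive along the orthogonal decomposition $\xi_1-\xi_1'=(\xi_1-\xi_1')^\Vert+(\xi_1-\xi_1')^\bot$, and this is exactly what forces $1/\sigma\leq 2$ in the hypotheses. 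Your proposal never explains why $1/\sigma\leq 2$ should enter, which is a sign this step is missing.

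Second, the $\omega$-integration in the gain-term kernel is genuinely coupled to the frequency geometry (through $s^\Vert,s^\bot$) in a way that has no loss-term analogue. The paper bounds the resulting integrals by a dyadic decomposition in the angle between $\omega$ and $W=\xi_1-\xi_1'$, using the spherical Jacobian factor $(2^{-k})^{d-2}$; this is a separate argument which cannot be obtained by ``reshuffling frequencies'' into the loss-term kernel. A smaller imprecision: the kernel after Cauchy--Schwarz is an integral over a codimension-one hyperplane in one set of $d$ variables \emph{and} over a second full $\mathbb{R}^d$-worth of variables, not a single $(d-1)$-dimensional paraboloid; the hypothesis $\alpha>\tfrac{d-1}{2}$ controls only the hyperplane factor, while $\beta>d$ is needed for the separate $\mathbb{R}^d$-integrals, so your single-surface bound $\int\langle\xi+\xi'\rangle^{-2\alpha}\langle\xi-\xi'\rangle^{-2\beta}\,d\mu$ does not reflect the actual structure of the estimate.
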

\begin{remark}
Note that the second part of Proposition \ref{prop:spacetime-est} 
formally follows from the first part by setting $r=0$. In fact we
will only prove the first part since the second part follows after
trivial changes to the proof.
\end{remark}
\begin{remark}
If $A=0$ it is possible to optimize the proof of Proposition
\ref{prop:spacetime-est} and obtain (\ref{eq:spacetime-est-2})
for any $\kappa_0 \geq \kappa \geq 0$, $\frac{1}{\sigma} \in
(0,2]$, and
$\alpha,\beta \in \left( \frac{d-1}{2},\infty\right)$. We omit
the details.
\end{remark}

\subsection*{Proof of Proposition \ref{prop:spacetime-est}}

\subsection*{Loss Term}

Consider a typical part of the loss term, e.g.
 $B_{1,k+1}^- \gamma^{(k+1)}$:
\begin{equation}
\begin{aligned}
& B_{1,k+1}^- \gamma^{(k+1)} (t,X_k,X_k^\prime) = \frac{i}{2^{2d} \pi^d}
 \int_{\mathbb{S}^{d-1}} d\omega \int_{\mathbb{R}^d} dz
\hat{\mathbf{b}}^\omega \left( \frac{z}{2}\right) \times \\
& \qquad \times \gamma^{(k+1)} \left( t,x_1-\frac{z}{4},X_{2:k},
\frac{x_1+x_1^\prime}{2}+\frac{z}{4},
x_1^\prime + \frac{z}{4},
X_{2:k}^\prime,\frac{x_1+x_1^\prime}{2}-\frac{z}{4}\right)
\end{aligned}
\end{equation}
We will fix some initial data $\gamma_0^{(k+1)} (X_{k+1},X_{k+1}^\prime)$ and
consider the following function:
\begin{equation}
B^-_{1,k+1}
 \left[ e^{\frac{1}{2} i t \left( \Delta_{X_{k+1}} - 
\Delta_{X_{k+1}^\prime}\right)}
\gamma_0^{(k+1)} \right] (t,X_k,X_k^\prime)
\end{equation}
The spacetime Fourier transform of a function $F (t,X_k,X_k^\prime)$ is
\begin{equation}
\begin{aligned}
& \tilde{F} (\tau,\xi_1,\dots,\xi_k,\xi_1^\prime,\dots,\xi_k^\prime) =\\
&\qquad \qquad \qquad = \int dt dX_k dX_k^\prime e^{-i t \tau}
e^{-i \sum_{i=1}^k x_i \cdot \xi_i} 
e^{-i \sum_{i=1}^k x_i^\prime \cdot \xi_i^\prime}
F(t,X_k,X_k^\prime)
\end{aligned}
\end{equation}
The spacetime Fourier transform of 
$e^{\frac{1}{2} i t (\Delta_{X_{k+1}} - \Delta_{X_{k+1}^\prime})} 
\gamma_0^{(k+1)}$ is, up to a constant depending on $k$,
\begin{equation}
\hat{\gamma}_0^{(k+1)}
 ( \xi_1,\dots,\xi_{k+1},\xi_1^\prime,\dots,\xi_{k+1}^\prime) 
\delta \left( \tau + \frac{1}{2} \sum_{i=1}^{k+1}
|\xi_i|^2 - \frac{1}{2} \sum_{i=1}^{k+1} |\xi_i^\prime|^2\right)
\end{equation}
We also have
\begin{equation}
\begin{aligned}
&\left( B^-_{1,k+1}\left[ e^{\frac{1}{2} i t \left( \Delta_{X_{k+1}} -
 \Delta_{X_{k+1}^\prime}
\right)} \gamma_0^{(k+1)}\right] \right)^{\sim} (\tau,\xi_1,\dots,
\xi_k,\xi^\prime_1,\dots,\xi_k^\prime) = \\
& = \textnormal{cst.} \int d\eta d\eta^\prime 
\delta \left( \tau + \frac{1}{2} \left| \xi_1 - \frac{\eta + \eta^\prime}{2}
\right|^2 - \frac{1}{2} \left| \xi_1^\prime - 
\frac{\eta + \eta^\prime}{2}\right|^2 +\right. \\
&\qquad \qquad \qquad \qquad \qquad 
 \left. + \frac{1}{2} |\eta|^2  - \frac{1}{2} |\eta^\prime|^2
+ \frac{1}{2}\sum_{2\leq i \leq k} (|\xi_i|^2-|\xi_i^\prime|^2)
 \right)\times \\
& \times \mathbf{b} \left( \frac{|-\xi_1+\xi_1^\prime+\eta-\eta^\prime|}{2},
\omega\cdot\frac{-\xi_1+\xi_1^\prime+\eta-\eta^\prime}
{|-\xi_1+\xi_1^\prime+\eta-\eta^\prime|}\right) \times \\
& \times \hat{\gamma}_0^{(k+1)} \left(
\xi_1 - \frac{\eta+\eta^\prime}{2},\xi_2,\dots,\xi_k,\eta,
\xi_1^\prime - \frac{\eta+\eta^\prime}{2},\xi_2^\prime,\dots,
\xi_k^\prime,\eta^\prime\right)
\end{aligned}
\end{equation}
The constant is uniformly bounded in $k$. Now we simply bound the collision
kernel $\mathbf{b}$ using 
$\left\Vert \mathbf{b}\right\Vert_{L^\infty_A}$ to yield:
\begin{equation}
\begin{aligned}
&\left|\left( B^-_{1,k+1}\left[ e^{\frac{1}{2} i t \left( \Delta_{X_{k+1}} -
 \Delta_{X_{k+1}^\prime}
\right)} \gamma_0^{(k+1)}\right] \right)^{\sim} (\tau,\xi_1,\dots,
\xi_k,\xi^\prime_1,\dots,\xi_k^\prime) \right| \lesssim \\
& \lesssim \left\Vert \mathbf{b}
 \right\Vert_{L^\infty_A} \int d\eta d\eta^\prime 
\left( \left< \xi_1-\xi_1^\prime\right>^A +
\left< \eta-\eta^\prime\right>^A \right)\times \\
& \times
\delta \left( \tau + \frac{1}{2} \left| \xi_1 - \frac{\eta + \eta^\prime}{2}
\right|^2 - \frac{1}{2} \left| \xi_1^\prime - 
\frac{\eta + \eta^\prime}{2}\right|^2 +\right. \\
&\qquad \qquad \qquad \qquad \qquad 
 \left. + \frac{1}{2} |\eta|^2  - \frac{1}{2} |\eta^\prime|^2
+ \frac{1}{2}\sum_{2\leq i \leq k} (|\xi_i|^2-|\xi_i^\prime|^2)
 \right)\times \\
& \times \left| \hat{\gamma}_0^{(k+1)} \left(
\xi_1 - \frac{\eta+\eta^\prime}{2},\xi_2,\dots,\xi_k,\eta,
\xi_1^\prime - \frac{\eta+\eta^\prime}{2},\xi_2^\prime,\dots,
\xi_k^\prime,\eta^\prime\right)\right|
\end{aligned}
\end{equation}

We want to estimate the following integral, for suitable 
$\alpha,\beta,\kappa,\sigma > 0$:
\begin{equation}
\begin{aligned}
I^- (\alpha,\beta,\kappa,\sigma) & = \int d\tau d\xi_1\dots d\xi_k
d\xi_1^\prime \dots d\xi_k^\prime \times \\
& \qquad \qquad \qquad \times \prod_{i=1}^k
\left\{ \left< \xi_i + \xi_i^\prime \right>^{2\alpha}
\left< \xi_i - \xi_i^\prime \right>^{2\beta}
e^{2\kappa \left< \xi_i-\xi_i^\prime\right>^{\frac{1}{\sigma}}}
\right\}\times \\
& \qquad \qquad \qquad \times 
\left|\left(B_{1,k+1}^-
\left[ e^{\frac{1}{2} i t \left( \Delta_{X_{k+1}}- 
\Delta_{X_{k+1}^\prime}\right)}
\gamma_0^{(k+1)} \right] \right)^\sim \right|^2
\end{aligned}
\end{equation}
To start, observe that
\begin{equation*}
\begin{aligned}
& I^- (\alpha,\beta,\kappa,\sigma) \lesssim 
\left\Vert \mathbf{b} \right\Vert_{L^\infty_A}^2
\int d\tau d\xi_1\dots d\xi_k
d\xi_1^\prime\dots d\xi_k^\prime 
d\eta_1 d\eta_1^\prime d\eta_2 d\eta_2^\prime \times \\
& \times \prod_{i=1}^k \left\{
\left< \xi_i+\xi_i^\prime \right>^{2\alpha}
\left< \xi_i-\xi_i^\prime \right>^{2\beta}
e^{2\kappa \left< \xi_i - \xi_i^\prime\right>^{\frac{1}{\sigma}}}
\right\} \times \\
& \times \left( \left< \xi_1 - \xi_1^\prime\right>^A + \left< \eta_1 - 
\eta_1^\prime\right>^A\right) \left( \left< \xi_1 - \xi_1^\prime\right>^A
+ \left< \eta_2 - \eta_2^\prime\right>^A\right)\times \\
& \times \delta \left( \tau + \frac{1}{2} \left| \xi_1 -
\frac{\eta_1+\eta_1^\prime}{2}\right|^2 - \frac{1}{2}
\left| \xi_1^\prime - \frac{\eta_1+\eta_1^\prime}{2}\right|^2+\right.\\
& \left. \qquad \qquad \qquad \qquad \qquad
+\frac{1}{2}\left|\eta_1\right|^2 - \frac{1}{2}\left|\eta_1^\prime\right|^2
+\frac{1}{2} \sum_{2\leq i \leq k} (|\xi_i|^2-|\xi_i^\prime|^2)
\right)\times \\
& \times \delta \left(  \tau + \frac{1}{2} \left| \xi_1 -
\frac{\eta_2+\eta_2^\prime}{2}\right|^2 - \frac{1}{2}
\left| \xi_1^\prime - \frac{\eta_2+\eta_2^\prime}{2}\right|^2 +\right. \\
& \qquad \qquad \qquad \qquad \qquad \left.
+\frac{1}{2}\left|\eta_2\right|^2 - \frac{1}{2}\left|\eta_2^\prime\right|^2
+\frac{1}{2} \sum_{2\leq i \leq k} (|\xi_i|^2-|\xi_i^\prime|^2)
\right)\times \\
& \times \left| \hat{\gamma}_0^{(k+1)}
\left(\xi_1-\frac{\eta_1+\eta_1^\prime}{2},\xi_2,\dots,\xi_k,\eta_1,
\xi_1^\prime-\frac{\eta_1+\eta_1^\prime}{2},\xi_2^\prime,\dots,
\xi_k^\prime,\eta_1^\prime\right) \right| \times\\
& \times \left| \hat{\gamma}_0^{(k+1)}
\left(\xi_1-\frac{\eta_2+\eta_2^\prime}{2},\xi_2,\dots,\xi_k,\eta_2,
\xi_1^\prime-\frac{\eta_2+\eta_2^\prime}{2},\xi_2^\prime,\dots,
\xi_k^\prime,\eta_2^\prime\right) \right|
\end{aligned}
\end{equation*}
Let $\kappa_0 >\kappa$, then multiply and divide the integrand by the following factor:
\begin{equation}
\prod_{j=1}^2 \left\{
\begin{aligned}
& \left< \xi_1+\xi_1^\prime-\eta_j-\eta_j^\prime \right>^\alpha
\left< \xi_1-\xi_1^\prime \right>^\beta
e^{\kappa_0 \left< \xi_1-\xi_1^\prime\right>^\frac{1}{\sigma}}\times \\
& \qquad \qquad\qquad \qquad \qquad \times\left<\eta_j+\eta_j^\prime\right>^\alpha 
\left< \eta_j - \eta_j^\prime\right>^\beta
e^{\kappa_0 \left< \eta_j-\eta_j^\prime\right>^\frac{1}{\sigma}}
\end{aligned}
\right\}
\end{equation}
Then group terms together and apply Cauchy-Schwarz \emph{pointwise}
under the integral sign. We obtain two different terms that are equal
due to symmetry under re-labeling coordinates; hence,
\begin{equation*}
\begin{aligned}
& I^- \lesssim \left\Vert \mathbf{b} \right\Vert_{L^\infty_A}^2
 \int d\tau d\xi_1 \dots d\xi_k
d\xi_1^\prime \dots d\xi_k^\prime 
d\eta_1 d\eta_1^\prime d\eta_2 d\eta_2^\prime \times \\
& \times \frac{\left<\xi_1+\xi_1^\prime\right>^{2\alpha} 
\left<\xi_1-\xi_1^\prime\right>^{2\beta}
\left( \left< \xi_1-\xi_1^\prime\right>^{2A} +
\left< \eta_1 - \eta_1^\prime\right>^{2A} \right)}
{\left<\xi_1+\xi_1^\prime-\eta_1-\eta_1^\prime\right>^{2\alpha}
\left<\xi_1-\xi_1^\prime\right>^{2\beta}
\left<\eta_1+\eta_1^\prime\right>^{2\alpha} 
\left<\eta_1-\eta_1^\prime\right>^{2\beta}} \times \\
&\times\frac{e^{2\kappa \left< \xi_1-\xi_1^\prime\right>^{\frac{1}{\sigma}}}}
{e^{2\kappa_0 \left< \xi_1-\xi_1^\prime \right>^{\frac{1}{\sigma}}}
e^{2\kappa_0 \left< \eta_1 - \eta_1^\prime\right>^{\frac{1}{\sigma}}}}\times\\
& \times \delta \left( \tau + \frac{1}{2} \left| \xi_1 -
\frac{\eta_1+\eta_1^\prime}{2}\right|^2 - \frac{1}{2}
\left| \xi_1^\prime - \frac{\eta_1+\eta_1^\prime}{2}\right|^2 + \right. \\
& \qquad \qquad \qquad \qquad \qquad\left. +\frac{1}{2}\left|\eta_1\right|^2 - 
\frac{1}{2}\left|\eta_1^\prime\right|^2 + \frac{1}{2}\sum_{2\leq i \leq k}
(|\xi_i|^2-|\xi_i^\prime|^2)
\right)\times \\
& \times \delta \left(  \tau + \frac{1}{2} \left| \xi_1 -
\frac{\eta_2+\eta_2^\prime}{2}\right|^2 - \frac{1}{2}
\left| \xi_1^\prime - \frac{\eta_2+\eta_2^\prime}{2}\right|^2 + \right.\\
& \qquad \qquad \qquad \qquad \qquad \left.
+\frac{1}{2}\left|\eta_2\right|^2 - \frac{1}{2}\left|\eta_2^\prime\right|^2
+\frac{1}{2} \sum_{2\leq i \leq k} (|\xi_i|^2-|\xi_i^\prime|^2)
\right)\times \\
& \times \left< \xi_1 + \xi_1^\prime - \eta_2 - \eta_2^\prime\right>^{2\alpha}
\left< \xi_1-\xi_1^\prime\right>^{2\beta} 
e^{2\kappa_0 \left< \xi_1-\xi_1^\prime\right>^{\frac{1}{\sigma}}} \times \\
& \times \left<\eta_2+\eta_2^\prime
\right>^{2\alpha} \left<\eta_2-\eta_2^\prime\right>^{2\beta}
e^{2\kappa_0 \left< \eta_2-\eta_2^\prime\right>^{\frac{1}{\sigma}}} \times\\
& \times \prod_{2\leq i \leq k} \left\{ \left< \xi_i + \xi_i^\prime
\right>^{2\alpha} \left<\xi_i-\xi_i^\prime\right>^{2\beta}
e^{2\kappa_0 \left< \xi_i - \xi_i^\prime\right>^{\frac{1}{\sigma}}}\right\}
\times\\
& \times\left| \hat{\gamma}_0^{(k+1)} \left(
\xi_1-\frac{\eta_2+\eta_2^\prime}{2},\xi_2,\dots,\xi_k,\eta_2,
\xi_1^\prime-\frac{\eta_2+\eta_2^\prime}{2},\xi_2^\prime,\dots,
\xi_k^\prime,\eta_2^\prime\right)\right|^2
\end{aligned}
\end{equation*}
The integral completely factorizes in the following way:
\begin{equation*}
\begin{aligned}
I^- & \leq \int d\tau d\xi_1\dots d\xi_k d\xi_1^\prime
\dots d\xi_k^\prime
\left( \int d\eta_1 d\eta_1^\prime  \dots\right)
\left( \int d\eta_2 d\eta_2^\prime \dots \right) \\
& \leq \left( \sup_{\tau,\xi_i,\xi_i^\prime} \int d\eta_1 d\eta_1^\prime
\dots \right) 
\times \int d\tau d\xi_1\dots d\xi_k d\xi_1^\prime \dots d\xi_k^\prime
\left( \int d\eta_2 d\eta_2^\prime \dots \right)
\end{aligned}
\end{equation*}
Finally we are able to conclude that if the following integral,
\begin{equation}
\label{eq:loss-integral}
\begin{aligned}
& \int d\eta d\eta^\prime \delta \left(\tau + 
\frac{1}{2} \left| \xi_1 - \frac{\eta+\eta^\prime}{2}\right|^2 -
\frac{1}{2} \left| \xi_1^\prime - \frac{\eta+\eta^\prime}{2}\right|^2+\right.\\
& \qquad\qquad \qquad\qquad \qquad \left.
+ \frac{1}{2} |\eta|^2 - \frac{1}{2} |\eta^\prime|^2 
+ \frac{1}{2} \sum_{2\leq i \leq k} (|\xi_i|^2 - |\xi_i^\prime|^2)
 \right)\times \\
& \qquad \qquad 
\times \frac{\left<\xi_1+\xi_1^\prime\right>^{2\alpha}
\left( \left<\xi_1-\xi_1^\prime\right>^{2A} +
\left<\eta-\eta^\prime\right>^{2A} \right) }
{\left<\xi_1+\xi_1^\prime-\eta-\eta^\prime\right>^{2\alpha}
\left< \eta +\eta^\prime\right>^{2\alpha} 
\left<\eta-\eta^\prime\right>^{2\beta}}
e^{-2(\kappa_0-\kappa)
\left<\xi_1-\xi_1^\prime\right>^{\frac{1}{\sigma}}}
\end{aligned}
\end{equation}
is bounded \emph{uniformly} with respect to $\tau, \xi_1,\dots,\xi_k,
\xi_1^\prime,\dots, \xi_k^\prime$, then
the following estimate holds:
\begin{equation}
\begin{aligned}
& \left\Vert B^-_{1,k+1} \left[ 
e^{\frac{1}{2} i t \left( \Delta_{X_{k+1}} - \Delta_{X_{k+1}^\prime}\right)}
\gamma_0^{(k+1)} \right] \right\Vert_{L^2_t 
H^{\alpha,\beta,\sigma,\kappa}_k } \leq \\
& \qquad \qquad \qquad \qquad \qquad \qquad \leq
C(\alpha,\beta,\sigma,\kappa,\kappa_0) \left\Vert \mathbf{b}
\right\Vert_{L^\infty_A}
 \left\Vert \gamma_0^{(k+1)} 
\right\Vert_{H^{\alpha,\beta,\sigma,\kappa_0}_{k+1}}
\end{aligned}
\end{equation}

Let us make the change of variables $w=\frac{\eta+\eta^\prime}{2}$,
$z=\frac{\eta-\eta^\prime}{2}$ in (\ref{eq:loss-integral}); then,
up to a constant, the integral becomes:
\begin{equation}
\label{eq:loss-integral-2}
\begin{aligned}
& \int dw dz \delta \left(\tau + 
\frac{1}{2} \left| \xi_1 - w\right|^2 -
\frac{1}{2} \left| \xi_1^\prime - w\right|^2
+ \frac{1}{2} |w+z|^2 - \frac{1}{2} |w-z|^2 + \right. \\
& \qquad \qquad \qquad \qquad \qquad \qquad \qquad \qquad \left.
+ \frac{1}{2} \sum_{2\leq i \leq k} \left( |\xi_i|^2-|\xi_i^\prime|^2
\right)\right)\times \\
& \qquad \qquad \qquad \qquad 
\times \frac{\left<\xi_1+\xi_1^\prime\right>^{2\alpha}
\left( \left< \xi_1-\xi_1^\prime\right>^{2A} + 
\left< 2z \right>^{2A} \right) }
{\left<\xi_1+\xi_1^\prime-2w\right>^{2\alpha}
e^{2(\kappa_0-\kappa) \left< \xi_1-\xi_1^\prime\right>^{\frac{1}{\sigma}}}
\left< 2w \right>^{2\alpha} \left<2z\right>^{2\beta}}
\end{aligned}
\end{equation}
This is the same as:
\begin{equation}
\label{eq:loss-integral-2}
\begin{aligned}
&K = \int dw dz \delta \left(\tau + 
\frac{1}{2} \sum_{i=1}^k \left( |\xi_i|^2 - |\xi_i^\prime|^2\right) -
\left( \xi_1 - \xi_1^\prime - 2z\right)\cdot w \right)\times \\
& \qquad \qquad \qquad 
\times \frac{\left<\xi_1+\xi_1^\prime\right>^{2\alpha} 
\left( \left< \xi_1-\xi_1^\prime\right>^{2A} +
\left< 2z \right>^{2A}\right)}
{\left<\xi_1+\xi_1^\prime-2w\right>^{2\alpha}
e^{2(\kappa_0-\kappa)\left<\xi_1-\xi_1^\prime\right>^{\frac{1}{\sigma}}}
\left< 2w \right>^{2\alpha} \left<2z\right>^{2\beta}}
\end{aligned}
\end{equation}
Hence, one way to parametrize the integral is to let $z\in\mathbb{R}^d$
be arbitrary and let $w$ range over a codimension one hyperplane
in $\mathbb{R}^d$; the hyperplane is determined by $\tau,\xi,\xi^\prime,z$.
We have:
\begin{equation}
K \leq \int_{\mathbb{R}^d}  dz 
\frac{  \left<\xi_1-\xi_1^\prime\right>^{2A} + 
\left<2z\right>^{2A} }
{\left|\xi_1-\xi_1^\prime - 2 z\right|
e^{2 (\kappa_0 - \kappa) \left<\xi_1-\xi_1^\prime\right>^{\frac{1}{\sigma}}}
\left< 2z \right>^{2\beta}}
\int_P \frac{dS (w) \left< \xi_1+\xi_1^\prime\right>^{2\alpha}}
{\left< \xi_1+\xi_1^\prime-2w\right>^{2\alpha} \left< 2w \right>^{2\alpha}}
\end{equation}
where $dS(w)$ is the induced surface measure on a 
hyperplane $P\subset \mathbb{R}^d$, given explicitly by
\begin{equation}
P = \left\{ w \in \mathbb{R}^d \left|
\tau + \frac{1}{2} \sum_{i=1}^k \left( |\xi_i|^2-|\xi_i^\prime|^2\right) -
\left( \xi_1-\xi_1^\prime-2z\right)\cdot w = 0\right. \right\}
\end{equation}
In order to show the uniform boundedness of $K$ with respect
to $\tau,\xi_1,\dots,\xi_k$, $\xi_1^\prime,\dots,\xi_k^\prime$,
it suffices to prove the uniform boundedness
of the following three quantities with respect to $W\in \mathbb{R}^d$:
\begin{equation}
I_1 = \sup_{P\subset \mathbb{R}^d : \dim P = d-1}
\int_P dS(w) \frac{\left<W\right>^{2\alpha}}
{\left< W - w\right>^{2\alpha} \left< w \right>^{2\alpha}}
\end{equation}
\begin{equation}
I_2 = \int_{\mathbb{R}^d} dz
\frac{1}{|W-z| \left< z \right>^{2\beta - 2A}}
\end{equation}
\begin{equation}
I_3 = \int_{\mathbb{R}^d} dz \frac{\left< W \right>^{2A}}
{e^{2(\kappa_0-\kappa)\left<W\right>^{\frac{1}{\sigma}}}
|W-z|\left<z\right>^{2\beta}}
\end{equation}
Note that in the expression for $I_1$, $P$ is an arbitrary hyperplane
of codimension one in $\mathbb{R}^d$.

We begin with $I_3$; clearly the integral over the set
$|z-W|<1$ is uniformly bounded in $W$ if $\beta \geq A$. Therefore it
suffices to bound the following integral:
\begin{equation}
I_3^\prime = \int_{\mathbb{R}^d} dz \frac{\left< W \right>^{2A} }
{e^{2(\kappa_0-\kappa)\left<W\right>^{\frac{1}{\sigma}}}
\left<W-z\right>\left<z\right>^{2\beta}}
\end{equation}
We have the following inequality:
\begin{equation}
\begin{aligned}
e^{2(\kappa-\kappa_0) \left< W \right>^\frac{1}{\sigma}}
& \geq  1+2(\kappa_0-\kappa)\left< W \right>^{\frac{1}{\sigma}} \\
& \gtrsim  (\kappa_0-\kappa)^r \left< W \right>^{\frac{r}{\sigma}}
\end{aligned}
\end{equation}
where $0 \leq r \leq 1$. Since $\frac{1}{\sigma}> \max(0,(2A-1))$,
 we can always find an $r \in [0,1)$ such that $\frac{r}{\sigma} \geq 
\max(0,(2A-1))$. For any such
value of $r$, we have:
\begin{equation}
I_3^\prime \lesssim (\kappa_0-\kappa)^{-r}
\int_{\mathbb{R}^d} dz \frac{\left< W \right>}
{\left<W-z\right>\left<z\right>^{2\beta}}
\end{equation}
Splitting the integral into the regions $|z| < \frac{1}{2} |W|$,
$|z|>2|W|$, and $\frac{1}{2} |W| \leq |z| \leq 2|W|$, we are able
to show that $I_3 \lesssim (\kappa_0 - \kappa)^{-r}$ uniformly
in $W$ as long as $\beta > \frac{d+1}{2}$,
$\frac{1}{\sigma} > \max(0,(2A-1))$, and $r\in [0,1)$ is such that
$\frac{r}{\sigma} \geq \max(0,(2A-1))$. 

Let us turn to $I_2$; clearly, the integral over the set
$|z-W|<1$ is uniformly bounded in $W$ if $\beta \geq A$. Therefore,
it suffices to bound the following integral uniformly in $W$:
\begin{equation}
I_2^\prime = \int_{\mathbb{R}^d} dz
\frac{1}{\left<W-z\right> \left< z \right>^{2\beta - 2A}}
\end{equation}
For any $A\in [0,1]$, this integral is automatically bounded,
uniformly in $W$, if $\beta > \frac{d+2}{2}$.

Finally we turn to $I_1$:
\begin{equation}
I_1 = \sup_{P\subset \mathbb{R}^d : \dim P = d-1}
\int_P dS(w) \frac{\left<W\right>^{2\alpha}}
{\left< W - w\right>^{2\alpha} \left< w \right>^{2\alpha}}
\end{equation}
We consider separately the regions $|w|<\frac{1}{2} |W|$,
$|w|>2|W|$, and $\frac{1}{2} |W| \leq |w| \leq 2|W|$; we find that
the integral is uniformly bounded in $W$ and $P$ as long
as $\alpha > \frac{d-1}{2}$.

To summarize, as long as $\alpha > \frac{d-1}{2}$, $\beta > d$,
$\frac{1}{\sigma} > \max (0,(2A-1))$,
and $r\in [0,1)$ is chosen such that
 $\frac{r}{\sigma}\geq \max(0,(2A-1))$, then 
for all $\kappa_0 > \kappa > 0$:
\begin{equation}
\begin{aligned}
& \left\Vert B^-_{i,k+1} \left[ 
e^{\frac{1}{2} i t \left( \Delta_{X_{k+1}} - \Delta_{X_{k+1}^\prime}\right)}
\gamma_0^{(k+1)} \right] \right\Vert_{L^2_t 
H^{\alpha,\beta,\sigma,\kappa}_k } \leq \\
& \qquad \qquad \qquad \leq
C(\alpha,\beta,\sigma,r) \left\Vert \mathbf{b} \right\Vert_{L^\infty_A}
\left( 1+\left(\kappa_0-\kappa\right)^{-\frac{1}{2}r}
\right)
 \left\Vert \gamma_0^{(k+1)} 
\right\Vert_{H^{\alpha,\beta,\sigma,\kappa_0}_{k+1}}
\end{aligned}
\end{equation}

\subsection*{Gain Term}

Consider a typical part of the gain term, e.g.
$B_{1,k+1}^+ \gamma^{(k+1)}$:
\begin{equation}
\begin{aligned}
& B_{1,k+1}^+ \gamma^{(k+1)} (t,X_k,X_k^\prime) = \\
& =  \frac{i}{2^{2d} \pi^d} \int_{\mathbb{S}^{d-1}}
d\omega \int_{\mathbb{R}^d} dz 
\hat{\mathbf{b}}^\omega \left( \frac{z}{2}\right)  \times \\
& \times \gamma^{(k+1)} \left( t,
x_1-\frac{1}{2}P_\omega (x_1-x_1^\prime)-
\frac{R_\omega (z)}{4} ,X_{2:k}, \right. \\
& \qquad \qquad \qquad \qquad\qquad
\left. \frac{x_1+x_1^\prime}{2}+\frac{1}{2} P_\omega
(x_1-x_1^\prime)+\frac{R_\omega (z)}{4},\right.\\
& \qquad \qquad \left.
 x_1^\prime + \frac{1}{2} P_\omega (x_1-x_1^\prime)
+\frac{R_\omega (z)}{4}, X_{2:k}^\prime, \right. \\
& \qquad \qquad \qquad \qquad\qquad \left.
\frac{x_1+x_1^\prime}{2}-\frac{1}{2} P_\omega (x_1-x_1^\prime)
-\frac{R_\omega (z)}{4}\right) 
\end{aligned}
\end{equation}
The spacetime Fourier transform of the function
\begin{equation}
B^+_{1,k+1}
\left[ e^{\frac{1}{2} i t \left( \Delta_{X_{k+1}}- 
\Delta_{X_{k+1}^\prime}\right)}
\gamma_0^{(k+1)} \right] (t,X_k,X_k^\prime)
\end{equation}
is the following, up to a constant:
\begin{equation}
\begin{aligned}
& \int_{\mathbb{S}^{d-1}} d\omega 
\int d\eta_1 d\eta_1^\prime d\eta_2 d\eta_2^\prime\times \\
& \times \delta \left( \tau + \frac{1}{2} |\eta_1|^2 - \frac{1}{2} 
|\eta_1^\prime|^2 + \frac{1}{2} |\eta_2|^2 -
\frac{1}{2} |\eta_2^\prime|^2 +\frac{1}{2}\sum_{2\leq i \leq k}
(|\xi_i|^2-|\xi_i^\prime|^2)\right) \times \\
& \times \delta \left( -\xi_1 + \eta_1 
+ \frac{\eta_2+\eta_2^\prime}{2}
-\frac{1}{2} P_\omega (\eta_1-\eta_1^\prime) 
+\frac{1}{2} P_\omega (\eta_2-\eta_2^\prime)\right)\times\\
& \times \delta \left( -\xi_1^\prime + \eta_1^\prime +
\frac{\eta_2+\eta_2^\prime}{2}
+ \frac{1}{2} P_\omega (\eta_1-\eta_1^\prime) 
- \frac{1}{2} P_\omega \cdot (\eta_2 - \eta_2^\prime)\right) \times \\
& \times \mathbf{b}
 \left( \frac{|-\eta_1+\eta_1^\prime+\eta_2-\eta_2^\prime|}{2},
\omega\cdot\frac{R_\omega ( -\eta_1+\eta_1^\prime+\eta_2-\eta_2^\prime)}
{|-\eta_1+\eta_1^\prime+\eta_2-\eta_2^\prime|}\right) \times \\
& \times \hat{\gamma}_0^{(k+1)} 
\left( \eta_1,\xi_2,\dots,\xi_k,\eta_2,
\eta_1^\prime,\xi_2^\prime,\dots,\xi_k^\prime,\eta_2^\prime\right)
\end{aligned}
\end{equation}
This is bounded by $\left\Vert \mathbf{b}
 \right\Vert_{L^\infty_A}$ times the
following integral:
\begin{equation}
\label{eq:spacetime-fourier}
\begin{aligned}
& \int_{\mathbb{S}^{d-1}} d\omega 
\int d\eta_1 d\eta_1^\prime d\eta_2 d\eta_2^\prime
\left( \left<\eta_1-\eta_1^\prime\right>^A +
\left<\eta_2-\eta_2^\prime\right>^A \right)\times \\
& \times \delta \left( \tau + \frac{1}{2} |\eta_1|^2 - \frac{1}{2} 
|\eta_1^\prime|^2 + \frac{1}{2} |\eta_2|^2 -
\frac{1}{2} |\eta_2^\prime|^2 +\frac{1}{2}\sum_{2\leq i \leq k}
(|\xi_i|^2-|\xi_i^\prime|^2)\right) \times \\
& \times \delta \left( -\xi_1 + \eta_1 
+ \frac{\eta_2+\eta_2^\prime}{2}
-\frac{1}{2} P_\omega (\eta_1-\eta_1^\prime) 
+\frac{1}{2} P_\omega (\eta_2-\eta_2^\prime)\right)\times\\
& \times \delta \left( -\xi_1^\prime + \eta_1^\prime +
\frac{\eta_2+\eta_2^\prime}{2}
+ \frac{1}{2} P_\omega (\eta_1-\eta_1^\prime) 
- \frac{1}{2} P_\omega (\eta_2 - \eta_2^\prime)\right) \times \\
& \times \left| \hat{\gamma}_0^{(k+1)} 
\left( \eta_1,\xi_2,\dots,\xi_k,\eta_2,
\eta_1^\prime,\xi_2^\prime,\dots,\xi_k^\prime,\eta_2^\prime\right)\right|
\end{aligned}
\end{equation}
Introduce the change of variables $w_1 = \frac{\eta_1+\eta_1^\prime}{2}$,
$z_1 = \frac{\eta_1 - \eta_1^\prime}{2}$, 
$w_2 = \frac{\eta_2 + \eta_2^\prime}{2}$,
$z_2 = \frac{\eta_2-\eta_2^\prime}{2}$.
Then (\ref{eq:spacetime-fourier}) becomes
\begin{equation}
\label{eq:spacetime-fourier-2}
\begin{aligned}
& \int_{\mathbb{S}^{d-1}} d\omega \int dw_1 dz_1 dw_2 dz_2
\left( \left< 2z_1 \right>^A + \left< 2z_2 \right>^A \right) \times \\
&  \times \delta \left( \tau + \frac{1}{2} |w_1+z_1|^2 - \frac{1}{2} 
|w_1-z_1|^2 + \frac{1}{2} |w_2+z_2|^2 -
\frac{1}{2} |w_2-z_2|^2 + \right. \\
& \qquad \qquad \qquad \qquad \qquad\left. + \frac{1}{2}\sum_{2\leq i \leq k} 
(|\xi_i|^2 - |\xi_i^\prime|^2)\right) \times \\
& \times \delta \left( -\xi_1 + w_1+z_1+ w_2
-  P_\omega (z_1-z_2) \right)\times\\
& \times \delta \left( -\xi_1^\prime + w_1 - z_1 + w_2
+ P_\omega (z_1 -z_2) \right) \times \\
& \times \left| \hat{\gamma}_0^{(k+1)} 
\left( w_1+z_1,\xi_2,\dots,\xi_k,w_2+z_2,
w_1-z_1,\xi_2^\prime,\dots,\xi_k^\prime,w_2-z_2\right)\right|
\end{aligned}
\end{equation}
Introduce yet another change of variables
$r_1 = \frac{w_1+w_2}{2}$, $s_1 = \frac{w_1-w_2}{2}$,
$r_2 = \frac{z_1+z_2}{2}$, $s_2 = \frac{z_1-z_2}{2}$.
Then (\ref{eq:spacetime-fourier-2}) becomes
\begin{equation}
\begin{aligned}
& \int_{\mathbb{S}^{d-1}} d\omega \int dr_1 ds_1 dr_2 ds_2
\left( \left<2(r_2+s_2)\right>^A + 
\left<2(r_2-s_2)\right>^A\right) \times \\
&  \times \delta \left( \tau + \frac{1}{2} |r_1+s_1+r_2+s_2|^2 - \frac{1}{2} 
|r_1+s_1-r_2-s_2|^2 + \right. \\
& \left. \qquad + \frac{1}{2} |r_1-s_1+r_2-s_2|^2 -
\frac{1}{2} |r_1-s_1-r_2+s_2|^2  + \right. \\
& \qquad \qquad \qquad \qquad \qquad \left. +\frac{1}{2}\sum_{2\leq i \leq k}
(|\xi_i|^2-|\xi_i^\prime|^2)\right) \times \\
& \times \delta \left( -\xi_1 + 2r_1 + r_2 + R_\omega (s_2) \right)\times\\
& \times \delta \left( -\xi_1^\prime + 2r_1 - r_2 - R_\omega (s_2)
 \right) \times \\
& \times\left| \hat{\gamma}_0^{(k+1)} 
\left( r_1+s_1+r_2+s_2,\xi_2,\dots,\xi_k,r_1-s_1+r_2-s_2,\right.\right.\\
& \qquad \qquad \qquad\left.\left. r_1+s_1-r_2-s_2,
\xi_2^\prime,\dots,\xi_k^\prime,r_1-s_1-r_2+s_2\right)\right|
\end{aligned}
\end{equation}
Replace $r_1$ with $\frac{r_1}{2}$ throughout:
\begin{equation}
\begin{aligned}
& \int_{\mathbb{S}^{d-1}} d\omega \int dr_1 ds_1 dr_2 ds_2 
\left( \left<2(r_2+s_2)\right>^A + 
\left<2(r_2-s_2)\right>^A\right) \times \\
&  \times \delta \left( \tau + \frac{1}{2} 
\left|\frac{r_1}{2}+s_1+r_2+s_2\right|^2 - 
\frac{1}{2} 
\left|\frac{r_1}{2}+s_1-r_2-s_2\right|^2 + \right. \\
& \left. \qquad + \frac{1}{2} \left|\frac{r_1}{2}-s_1+r_2-s_2\right|^2 -
\frac{1}{2} \left|\frac{r_1}{2}-s_1-r_2+s_2\right|^2 + \right. \\
& \qquad \qquad \qquad \qquad \qquad \left. +\frac{1}{2}
\sum_{2\leq i \leq k} (|\xi_i|^2-|\xi_i^\prime|^2)\right) \times \\
& \times \delta \left( -\xi_1 + r_1 + r_2 + R_\omega (s_2) \right)\times\\
& \times \delta \left( -\xi_1^\prime + r_1 - r_2 - R_\omega (s_2)
  \right) \times \\
& \times \left| \hat{\gamma}_0^{(k+1)} 
\left( \frac{r_1}{2}+s_1+r_2+s_2,\xi_2,\dots,\xi_k,
\frac{r_1}{2}-s_1+r_2-s_2,\right.\right.\\
& \qquad \qquad \qquad\left.\left. \frac{r_1}{2}+s_1-r_2-s_2,
\xi_2^\prime,\dots,\xi_k^\prime,\frac{r_1}{2}-s_1-r_2+s_2\right)\right|
\end{aligned}
\end{equation}
Finally perform the change of variables
$\zeta_1 = r_1+r_2$, $\zeta_2 = r_1-r_2$:
\begin{equation}
\begin{aligned}
& \int_{\mathbb{S}^{d-1}} d\omega \int d\zeta_1 d\zeta_2 ds_1 ds_2 
\left( \left< \zeta_1-\zeta_2+2s_2\right>^A + 
\left< \zeta_1-\zeta_2-2s_2\right>^A \right) \times \\
&  \times \delta \left( \tau + \frac{1}{2} 
\left|\frac{3\zeta_1}{4}-\frac{\zeta_2}{4}+s_1+s_2\right|^2 - 
\frac{1}{2} 
\left|-\frac{\zeta_1}{4}+\frac{3\zeta_2}{4}+s_1-s_2\right|^2 + \right. \\
& \left. \qquad + \frac{1}{2} 
\left|\frac{3\zeta_1}{4}-\frac{\zeta_2}{4}-s_1-s_2\right|^2 -
\frac{1}{2} \left|-\frac{\zeta_1}{4}+\frac{3\zeta_2}{4}
-s_1+s_2\right|^2 + \right. \\
& \qquad \qquad \qquad \qquad \qquad \left. +\frac{1}{2}
\sum_{2\leq i \leq k} (|\xi_i|^2-|\xi_i^\prime|^2)\right) \times \\
& \times \delta \left( -\xi_1 + \zeta_1 + R_\omega (s_2) \right)\times\\
& \times \delta \left( -\xi_1^\prime + \zeta_2 - R_\omega (s_2)
 \right) \times \\
& \times \left| \hat{\gamma}_0^{(k+1)} 
\left( \frac{3\zeta_1}{4}-\frac{\zeta_2}{4}+s_1+s_2,\xi_2,\dots,\xi_k,
\frac{3\zeta_1}{4}-\frac{\zeta_2}{4}-s_1-s_2,\right.\right.\\
& \qquad \qquad \qquad\left.\left. 
\frac{-\zeta_1}{4}+\frac{3\zeta_2}{4}+s_1-s_2,
\xi_2^\prime,\dots,\xi_k^\prime,
-\frac{\zeta_1}{4}+\frac{3\zeta_2}{4}-s_1+s_2\right)\right|
\end{aligned}
\end{equation}
Now we can integrate out the variables $\zeta_1,\zeta_2$ to obtain:
\begin{equation}
\begin{aligned}
& \int_{\mathbb{S}^{d-1}} d\omega \int ds_1 ds_2 
\left( \left< 4 s_2^\Vert + \xi_1 - \xi_1^\prime \right>^A +
\left< -4s_2^\bot + \xi_1 - \xi_1^\prime \right>^A \right)\times \\
&  \times \delta \left( \tau + \frac{1}{2} 
\left|s_1 + 2  s_2^{\Vert} + \frac{3\xi_1-\xi_1^\prime}{4}
\right|^2 - 
\frac{1}{2} 
\left|s_1-2 s_2^{\Vert} + \frac{3\xi_1^\prime-\xi_1}{4}
\right|^2 + \right. \\
& \left. \qquad \qquad + \frac{1}{2} 
\left|-s_1 - 2 s_2^{\bot} +
\frac{3\xi_1-\xi_1^\prime}{4}\right|^2 -
\frac{1}{2} \left|
-s_1 + 2 s_2^{\bot} + 
\frac{3\xi_1^\prime-\xi_1}{4} \right|^2 + \right. \\
& \qquad \qquad \qquad \qquad \qquad \left. + \frac{1}{2}\sum_{2\leq i \leq k}
(|\xi_i|^2 - |\xi_i^\prime|^2) \right) \times \\
& \times \left| \hat{\gamma}_0^{(k+1)} 
\left( s_1+2s_2^\Vert+\frac{3\xi_1-\xi_1^\prime}{4},\xi_2,\dots,\xi_k,
-s_1-2 s_2^\bot + \frac{3\xi_1-\xi_1^\prime}{4},\right.\right.\\
& \qquad \qquad \qquad\left. \left.
s_1-2s_2^\Vert + \frac{3\xi_1^\prime-\xi_1}{4},
\xi_2^\prime,\dots,\xi_k^\prime,
-s_1+2s_2^\bot+\frac{3\xi_1^\prime-\xi_1}{4}\right)\right|
\end{aligned}
\end{equation}
where $s_2^{\Vert}=P_\omega ( s_2)$ and
$s_2^{\bot} = \left(\mathbb{I}-P_\omega\right) (s_2)$.

We want to estimate the following integral, for suitable 
$\alpha,\beta,\kappa,\sigma > 0$:
\begin{equation}
\begin{aligned}
I^+ (\alpha,\beta,\kappa,\sigma) & = \int d\tau d\xi_1\dots d\xi_k
d\xi_1^\prime \dots d\xi_k^\prime \times \\
& \qquad \qquad \qquad \times \prod_{i=1}^k
\left\{ \left< \xi_i + \xi_i^\prime \right>^{2\alpha}
\left< \xi_i - \xi_i^\prime \right>^{2\beta}
e^{2\kappa \left< \xi_i - \xi_i^\prime\right>^{\frac{1}{\sigma}}}
\right\}\times \\
& \qquad \qquad \qquad \times 
\left|\left(B_{1,k+1}^+
\left[ e^{\frac{1}{2} i t \left( \Delta_{X_{k+1}}- 
\Delta_{X_{k+1}}^\prime\right)}
\gamma_0^{(k+1)} \right] \right)^\sim \right|^2
\end{aligned}
\end{equation}
Reasoning as for the loss term, if we can show that the following integral
\begin{equation}
\label{eq:gain-int}
\begin{aligned}
& \int_{\mathbb{S}^{d-1}} d\omega \int ds_1 ds_2 \times \\
&  \times \delta \left( \tau + \frac{1}{2} 
\left|s_1 + 2  s_2^{\Vert} + \frac{3\xi_1-\xi_1^\prime}{4}
\right|^2 - 
\frac{1}{2} 
\left|s_1-2 s_2^{\Vert} + \frac{3\xi_1^\prime-\xi_1}{4}
\right|^2 + \right. \\
& \left. \qquad \qquad + \frac{1}{2} 
\left|-s_1 - 2 s_2^{\bot} +
\frac{3\xi_1-\xi_1^\prime}{4}\right|^2 -
\frac{1}{2} \left|
-s_1 + 2 s_2^{\bot} + 
\frac{3\xi_1^\prime-\xi_1}{4} \right|^2 + \right. \\
& \qquad \qquad\qquad \qquad \qquad\qquad
\left. + \sum_{2\leq i \leq k} (|\xi_i|^2-|\xi_i^\prime|^2) \right) \times \\
& \times \frac{\left<\xi_1+\xi_1^\prime\right>^{2\alpha} 
\left<\xi_1-\xi_1^\prime\right>^{2\beta}
\left( \left< 4 s_2^\Vert + \xi_1 - \xi_1^\prime\right>^{2A} +
\left< -4s_2^\bot + \xi_1 - \xi_1^\prime\right>^{2A}\right)}
{\left<2s_1+\frac{\xi_1+\xi_1^\prime}{2}\right>^{2\alpha}
\left<4s_2^{\Vert} + \xi_1 - \xi_1^\prime\right>^{2\beta}
\left< -2s_1 + \frac{\xi_1+\xi_1^\prime}{2}\right>^{2\alpha}
\left< -4s_2^{\bot}+\xi_1-\xi_1^\prime\right>^{2\beta}}\times \\
& \times
e^{-2(\kappa_0-\kappa)
\left<4s_2^\Vert+\xi_1-\xi_1^\prime\right>^{\frac{1}{\sigma}}}
e^{-2(\kappa_0-\kappa)
\left<-4s_2^\bot+\xi_1-\xi_1^\prime\right>^{\frac{1}{\sigma}}}
\times \\
& \times 
e^{2\kappa \left( \left< \xi_1-\xi_1^\prime\right>^{\frac{1}{\sigma}}
-\left<4s_2^\Vert+\xi_1-\xi_1^\prime\right>^{\frac{1}{\sigma}}
-\left<-4s_2^\bot+\xi_1-\xi_1^\prime\right>^{\frac{1}{\sigma}}\right)}
\end{aligned}
\end{equation}
is bounded uniformly in $\tau,\xi_1,\dots,\xi_k,
\xi_1^\prime,\dots,\xi_k^\prime$, then we will have the
following estimate:
\begin{equation}
\begin{aligned}
& \left\Vert B^+_{1,k+1} \left[ 
e^{\frac{1}{2} i t \left( \Delta_{X_{k+1}} - \Delta_{X_{k+1}^\prime}\right)}
\gamma_0^{(k+1)} \right] \right\Vert_{L^2_t 
H^{\alpha,\beta,\sigma,\kappa}_k } \leq \\
& \qquad \qquad \qquad \qquad  \leq
C(\alpha,\beta,\sigma,\kappa,\kappa_0) \left\Vert \mathbf{b}
 \right\Vert_{L^\infty_A}
 \left\Vert \gamma_0^{(k+1)}
 \right\Vert_{H^{\alpha,\beta,\sigma,\kappa_0}_{k+1}}
\end{aligned}
\end{equation}

Before proceeding further, we must eliminate the most dangerous
contribution in (\ref{eq:gain-int}), which is the following
\emph{exponential} factor:
\begin{equation}
e^{2\kappa \left( \left< \xi_1-\xi_1^\prime\right>^{\frac{1}{\sigma}}
-\left<4s_2^\Vert+\xi_1-\xi_1^\prime\right>^{\frac{1}{\sigma}}
-\left<-4s_2^\bot+\xi_1-\xi_1^\prime\right>^{\frac{1}{\sigma}}\right)}
\end{equation}
We will show that this factor is in fact bounded by $1$, as long
as $\sigma \geq \frac{1}{2}$. Indeed for $\sigma \geq \frac{1}{2}$ we have:
\begin{equation}
\begin{aligned}
& \left< \xi_1-\xi_1^\prime\right>^{\frac{1}{\sigma}}
-\left<4s_2^\Vert+\xi_1-\xi_1^\prime\right>^{\frac{1}{\sigma}}
-\left<-4s_2^\bot+\xi_1-\xi_1^\prime\right>^{\frac{1}{\sigma}} \\
& \leq \left< \xi_1-\xi_1^\prime\right>^{\frac{1}{\sigma}}
-\left<(\xi_1-\xi_1^\prime)^\bot\right>^{\frac{1}{\sigma}}
-\left<(\xi_1-\xi_1^\prime)^\Vert\right>^{\frac{1}{\sigma}} \\
&  \leq \left( \left< (\xi_1-\xi_1^\prime)^\Vert\right>^2
+ \left< (\xi_1-\xi_1^\prime)^\bot\right>^2
\right)^{\frac{1}{2\sigma}}
-\left<(\xi_1-\xi_1^\prime)^\bot\right>^{\frac{1}{\sigma}}
-\left<(\xi_1-\xi_1^\prime)^\Vert\right>^{\frac{1}{\sigma}} \\
&  \leq 0 
\end{aligned}
\end{equation} 

We now deal with the other exponential factors in (\ref{eq:gain-int}),
namely:
\begin{equation}
\label{eq:gain-exp-11}
e^{-2(\kappa_0-\kappa)\left<4s_2^\Vert+\xi_1-\xi_1^\prime
\right>^{\frac{1}{\sigma}}}
e^{-2(\kappa_0-\kappa)\left<-4s_2^\bot+\xi_1-\xi_1^\prime
\right>^{\frac{1}{\sigma}}}
\end{equation}
Since $\frac{1}{\sigma} > \max (0,2A-1)$, we can always find
$r \in [0,1)$ such that $\frac{r}{\sigma} \geq 
\max (0,2A-1+\delta)$ for a small $\delta > 0$. Since
$e^u \geq 1 + u \gtrsim u^r$ for $u>0$, we find that if $\kappa <
\kappa_0$ then  (\ref{eq:gain-exp-11}) is bounded above by the following
quantity:
\begin{equation}
\begin{aligned}
& (\kappa_0 - \kappa)^{-r}\times \\
& \times \min
\left( \left< 4 s_2^\Vert + \xi_1 - \xi_1^\prime\right>^{-\max(0,2A-1+\delta)},
\left< -4 s_2^\bot + \xi_1 - \xi_1^\prime\right>^{-\max(0,2A-1+\delta)}
\right)
\end{aligned}
\end{equation}

The integral (\ref{eq:gain-int}) is now bounded by the following integral,
if $\delta > 0$ is sufficiently small depending on $A$, $\sigma$ and $r$:
(note that this follows from the previous paragraph by considering
separately $0\leq A < \frac{1}{2}$ and $\frac{1}{2} \leq A \leq 1$)
\begin{equation}
\begin{aligned}
& (\kappa_0 - \kappa)^{-r}
 \int_{\mathbb{S}^{d-1}} d\omega \int ds_1 ds_2\times \\
& \times \delta \left( \tau + \frac{1}{2} \sum_{i=1}^k
\left( |\xi_i|^2 - |\xi_i^\prime|^2
\right) +
\left( 4 s_1 - R_\omega ( \xi_1+\xi_1^\prime)\right)\cdot s_2 \right)\times\\
& \times  \frac{\left<\xi_1+\xi_1^\prime\right>^{2\alpha} 
\left<\xi_1-\xi_1^\prime\right>^{2\beta}\left(
\left< 4s_2^\Vert + \xi_1 -\xi_1^\prime\right>^{1-\delta} +
\left< -4s_2^\bot + \xi_1 - \xi_1^\prime\right>^{1-\delta}\right)}
{\left<2s_1+\frac{\xi_1+\xi_1^\prime}{2}\right>^{2\alpha}
\left<4s_2^{\Vert} + \xi_1 - \xi_1^\prime\right>^{2\beta}
\left< -2s_1 + \frac{\xi_1+\xi_1^\prime}{2}\right>^{2\alpha}
\left< -4s_2^{\bot}+\xi_1-\xi_1^\prime\right>^{2\beta}}
\end{aligned}
\end{equation}
 This is in turn equivalent to the
following integral:
\begin{equation}
\begin{aligned}
& (\kappa_0 - \kappa)^{-r} \times \\
&\times \int_{\mathbb{S}^{d-1}} d\omega \int ds_2
\frac{\left<\xi_1-\xi_1^\prime\right>^{2\beta}
\left( \left< 4 s_2^\Vert + \xi_1 - \xi_1^\prime \right>^{1-\delta}
+ \left< - 4 s_2^\bot + \xi_1 - \xi_1^\prime \right>^{1-\delta}\right)}
{\left| 4 s_2\right|
\left< 4s_2^{\Vert}+\xi_1-\xi_1^\prime  \right>^{2\beta}
\left< -4s_2^{\bot}+\xi_1-\xi_1^\prime  \right>^{2\beta}}\times \\
& \qquad \qquad \qquad \qquad\qquad
\times \int_P dS(s_1) \frac{\left<\xi_1+\xi_1^\prime\right>^{2\alpha}}
{\left< 2s_1 + \frac{\xi_1+\xi_1^\prime}{2} \right>^{2\alpha}
\left< -2s_1 + \frac{\xi_1+\xi_1^\prime}{2}  \right>^{2\alpha}}
\end{aligned}
\end{equation}
where $P \subset \mathbb{R}^d$ is the following codimension one hyperplane:
\begin{equation}
P = \left\{ s_1 \in \mathbb{R}^d \left|
\tau + \frac{1}{2} \sum_{i=1}^k \left( |\xi_i|^2-|\xi_i^\prime|^2\right) +
\left( 4s_1-R_\omega (\xi_1+\xi_1^\prime)\right)\cdot s_2 = 0\right.\right\}
\end{equation}
Therefore we only need to show the boundedness of the following three
quantities uniformly in $\xi_1,\xi_1^\prime,\tau$:
\begin{equation}
I_1 = \sup_{P \subset \mathbb{R}^d : \dim P = d-1}
\int_P dS(s) \frac{\left<\xi_1+\xi_1^\prime\right>^{2\alpha}}
{\left<2s+\frac{\xi_1+\xi_1^\prime}{2}\right>^{2\alpha}
\left<-2s+\frac{\xi_1+\xi_1^\prime}{2}\right>^{2\alpha}}
\end{equation}
\begin{equation}
I_2 = \int_{\mathbb{S}^{d-1}} d\omega \int_{\mathbb{R}^d} ds
\frac{\left<\xi_1-\xi_1^\prime\right>^{2\beta}}
{|4s|\left<4s^{\Vert}+\xi_1-\xi_1^\prime\right>^{2\beta-1+\delta}
\left<-4s^{\bot}+\xi_1-\xi_1^\prime\right>^{2\beta}}
\end{equation}
\begin{equation}
I_3 = \int_{\mathbb{S}^{d-1}} d\omega \int_{\mathbb{R}^d} ds
\frac{\left<\xi_1-\xi_1^\prime\right>^{2\beta}}
{|4s|\left<4s^{\Vert}+\xi_1-\xi_1^\prime\right>^{2\beta}
\left<-4s^{\bot}+\xi_1-\xi_1^\prime\right>^{2\beta-1+\delta}}
\end{equation}

Let us first consider the integral $I_2$; in what follows we will
assume that $\beta > \frac{d}{2}$. Clearly,
 $I_2$ is equivalent to the following quantity:
\begin{equation}
I_2 \lesssim \int_{\mathbb{S}^{d-1}} d\omega \int_{\mathbb{R}^d} ds
\frac{\left<\xi_1-\xi_1^\prime\right>^{2\beta}}
{|s|\left<s^{\Vert}+\xi_1-\xi_1^\prime\right>^{2\beta-1+\delta}
\left<s^{\bot}+ \xi_1-\xi_1^\prime \right>^{2\beta}}
\end{equation}
Setting $W = \xi_1 - \xi_1^\prime$, this gives:
\begin{equation}
I_2 \lesssim \int_{\mathbb{S}^{d-1}} d\omega \int_{\mathbb{R}^d} ds
\frac{\left<W\right>^{2\beta}}
{|s|\left<s^{\Vert}+W\right>^{2\beta-1+\delta}
\left<s^{\bot}+W\right>^{2\beta}}
\end{equation}
Moreover, since the integral for $|s|\leq 1$ is obviously uniformly
bounded in $W$, we may instead bound the following integral:
\begin{equation}
I_2^\prime \lesssim \int_{\mathbb{S}^{d-1}} d\omega \int_{\mathbb{R}^d} ds
\frac{\left<W\right>^{2\beta}}
{\left<s\right>\left<s^{\Vert}+W\right>^{2\beta-1+\delta}
\left<s^{\bot}+W\right>^{2\beta}}
\end{equation}
Since $|s^\Vert| \leq |s|$ we have:
\begin{equation}
I_2^\prime \lesssim \int_{\mathbb{S}^{d-1}} d\omega \int_{\mathbb{R}^d} ds
\frac{\left<W\right>^{2\beta}}
{\left<s^\Vert\right>\left<s^{\Vert}+W\right>^{2\beta-1+\delta}
\left<s^{\bot}+W\right>^{2\beta}}
\end{equation}
Therefore, for all large enough $|W|$,
\begin{equation}
\begin{aligned}
I_2^\prime & \lesssim \int_{\mathbb{S}^{d-1}} d\omega \int_{\mathbb{R}^d} ds
\frac{\left<W\right>^{2\beta}}
{\left< s^\Vert \right>
\left<s^{\Vert}+W\right>^{2\beta-1+\delta}
\left<s^{\bot}+W\right>^{2\beta}} \\
& = \int_{\mathbb{S}^{d-1}} d\omega \left< W \right>^{2\beta}
\left( \int \frac{ds^\Vert}
{\left< s^\Vert \right>
\left< s^\Vert + W \right>^{2\beta-1+\delta}}\right)
\left( \int \frac{ds^\bot}
{\left< s^\bot + W \right>^{2\beta}}\right) \\
& \lesssim \int_{\mathbb{S}^{d-1}} d\omega
\left< W \right>^{2\beta} \left(
\left< W\right>^{-1} \left<W^\bot\right>^{2-2\beta-\delta}
\log \left<W\right> \right)
\left( \left<W^\Vert\right>^{d-1-2\beta}\right)\\
\end{aligned}
\end{equation}
The integral over $s^\bot$ is estimated by a trivial
computation, whereas the integral over $s^\Vert$ may be estimated
by considering separately the regions $|s^\Vert| < \frac{1}{2} |W|$,
$|s^\Vert| > 2|W|$, and $\frac{1}{2} |W| \leq |s^\Vert| \leq 2|W|$.

We find that $I_2^\prime$ obeys the following
estimate:
\begin{equation}
I_2^\prime \lesssim
 \int_{\mathbb{S}^{d-1}} d\omega
\left< W \right>^{2\beta-1+\frac{1}{2}\delta}
 \left<W^\bot\right>^{2-2\beta-\delta}
\left<W^\Vert\right>^{d-1-2\beta}
\end{equation}
Then we have
\begin{equation}
\left< W \right>^{2\beta-1+\frac{1}{2}\delta} \lesssim
\left< W^\Vert \right>^{2\beta-1+\frac{1}{2}\delta} +
\left< W^\bot \right>^{2\beta-1+\frac{1}{2}\delta}
\end{equation}
Hence $I_2^\prime \lesssim I_2^{\prime \prime} + 
I_2^{\prime \prime \prime}$ where
\begin{equation}
I_2^{\prime \prime} = \int_{\mathbb{S}^{d-1}} d\omega
\left< W^\bot\right>^{2-2\beta-\delta}
\left< W^\Vert\right>^{d-2+\frac{1}{2}\delta}
\end{equation}
\begin{equation}
I_2^{\prime \prime \prime} = \int_{\mathbb{S}^{d-1}}
d\omega \left< W^\bot\right>^{1-\frac{1}{2}\delta}
\left< W^\Vert \right>^{d-1-2\beta}
\end{equation}
Then for any $\delta$ sufficiently small and
 $\beta$ sufficiently large ($\beta \geq d$ is easily
sufficient for small $\delta$), both $I_2^{\prime \prime}$
and $I_2^{\prime \prime \prime}$ may be bounded using dyadic
decompositions in the angular parameter $\omega$, as follows:
neglecting additive constants,
\begin{equation}
\begin{aligned}
I_2^{\prime \prime} & \lesssim \sum_{k=1}^\infty
\int_{\omega : 2^{-k-1} |W^\Vert| \leq |W^\bot| < 2^{-k} |W^\Vert|}
d\omega \left< W^\bot\right>^{2-2\beta-\delta}
\left< W^\Vert\right>^{d-2+\frac{1}{2}\delta} \\
& \lesssim \sum_{k=1}^\infty 2^{-k-1} \times
(2^{-k})^{d-2} \times (2^{k+1})^{d-2+\frac{1}{2}\delta} <\infty
\end{aligned}
\end{equation}
\begin{equation}
\begin{aligned}
I_2^{\prime \prime \prime} & \lesssim \sum_{k=1}^\infty
\int_{\omega : 2^{-k-1} |W^\bot| \leq |W^\Vert| < 2^{-k} |W^\bot|}
d\omega \left< W^\bot\right>^{1-\frac{1}{2}\delta}
\left< W^\Vert\right>^{d-1-2\beta} \\
& \lesssim \sum_{k=1}^\infty 2^{-k-1} \times 
(2^{k+1})^{1-\frac{1}{2}\delta} <\infty
\end{aligned}
\end{equation}
The factor of $(2^{-k})^{d-2}$ in $I_2^{\prime \prime}$ comes from
the Jacobian for spherical coordinates in $\mathbb{R}^d$.

Let us now consider the integral $I_3$, and assume
$\beta > \frac{d}{2}$. Clearly,
 $I_3$ is equivalent to the following quantity:
\begin{equation}
I_3 \lesssim \int_{\mathbb{S}^{d-1}} d\omega \int_{\mathbb{R}^d} ds
\frac{\left<\xi_1-\xi_1^\prime\right>^{2\beta}}
{|s|\left<s^{\Vert}+\xi_1-\xi_1^\prime\right>^{2\beta}
\left<s^{\bot}+ \xi_1-\xi_1^\prime \right>^{2\beta-1+\delta}}
\end{equation}
Setting $W = \xi_1 - \xi_1^\prime$, this gives:
\begin{equation}
I_3 \lesssim \int_{\mathbb{S}^{d-1}} d\omega \int_{\mathbb{R}^d} ds
\frac{\left<W\right>^{2\beta}}
{|s|\left<s^{\Vert}+W\right>^{2\beta}
\left<s^{\bot}+W\right>^{2\beta-1+\delta}}
\end{equation}
Moreover, since the integral for $|s|\leq 1$ is obviously uniformly
bounded in $W$, we may instead bound the following integral:
\begin{equation}
I_3^\prime \lesssim \int_{\mathbb{S}^{d-1}} d\omega \int_{\mathbb{R}^d} ds
\frac{\left<W\right>^{2\beta}}
{\left<s\right>\left<s^{\Vert}+W\right>^{2\beta}
\left<s^{\bot}+W\right>^{2\beta-1+\delta}}
\end{equation}
Since $|s^\Vert| \leq |s|$ we have:
\begin{equation}
I_3^\prime \lesssim \int_{\mathbb{S}^{d-1}} d\omega \int_{\mathbb{R}^d} ds
\frac{\left<W\right>^{2\beta}}
{\left<s^\Vert\right>\left<s^{\Vert}+W\right>^{2\beta}
\left<s^{\bot}+W\right>^{2\beta-1+\delta}}
\end{equation}
Therefore, for all large enough $|W|$,
\begin{equation}
\begin{aligned}
I_3^\prime & \lesssim \int_{\mathbb{S}^{d-1}} d\omega \int_{\mathbb{R}^d} ds
\frac{\left<W\right>^{2\beta}}
{\left< s^\Vert \right>
\left<s^{\Vert}+W\right>^{2\beta}
\left<s^{\bot}+W\right>^{2\beta-1+\delta}} \\
& = \int_{\mathbb{S}^{d-1}} d\omega \left< W \right>^{2\beta}
\left( \int \frac{ds^\Vert}
{\left< s^\Vert \right>
\left< s^\Vert + W \right>^{2\beta}}\right)
\left( \int \frac{ds^\bot}
{\left< s^\bot + W \right>^{2\beta-1+\delta}}\right) \\
& \lesssim \int_{\mathbb{S}^{d-1}} d\omega
\left< W \right>^{2\beta} \left(
\left< W\right>^{-1} \left<W^\bot\right>^{1-2\beta}
\log \left<W\right> \right)
\left( \left<W^\Vert\right>^{d-2\beta-\delta}\right)\\
\end{aligned}
\end{equation}
As before, the integral over $s^\bot$ is estimated by a trivial
computation, whereas the integral over $s^\Vert$ may be estimated
by considering separately the regions $|s^\Vert| < \frac{1}{2} |W|$,
$|s^\Vert| > 2|W|$, and $\frac{1}{2} |W| \leq |s^\Vert| \leq 2|W|$.

We find that $I_3^\prime$ obeys the following
estimate:
\begin{equation}
I_3^\prime \lesssim
 \int_{\mathbb{S}^{d-1}} d\omega
\left< W \right>^{2\beta-1+\frac{1}{2}\delta}
 \left<W^\bot\right>^{1-2\beta}
\left<W^\Vert\right>^{d-2\beta-\delta}
\end{equation}
Then we have
\begin{equation}
\left< W \right>^{2\beta-1+\frac{1}{2}\delta} \lesssim
\left< W^\Vert \right>^{2\beta-1+\frac{1}{2}\delta} +
\left< W^\bot \right>^{2\beta-1+\frac{1}{2}\delta}
\end{equation}
Hence $I_3^\prime \lesssim I_3^{\prime \prime} + 
I_3^{\prime \prime \prime}$ where
\begin{equation}
I_3^{\prime \prime} = \int_{\mathbb{S}^{d-1}} d\omega
\left< W^\bot\right>^{1-2\beta}
\left< W^\Vert\right>^{d-1-\frac{1}{2}\delta}
\end{equation}
\begin{equation}
I_3^{\prime \prime \prime} = \int_{\mathbb{S}^{d-1}}
d\omega \left< W^\bot\right>^{\frac{1}{2} \delta}
\left< W^\Vert \right>^{d-2\beta-\delta}
\end{equation}
Then for any sufficiently small $\delta$ and
$\beta > d$, both $I_3^{\prime \prime}$
and $I_3^{\prime \prime \prime}$ may be bounded using dyadic
decompositions in the angular parameter $\omega$, as follows:
neglecting additive constants,
\begin{equation}
\begin{aligned}
I_3^{\prime \prime} & \lesssim \sum_{k=1}^\infty
\int_{\omega : 2^{-k-1} |W^\Vert| \leq |W^\bot| < 2^{-k} |W^\Vert|}
d\omega \left< W^\bot\right>^{1-2\beta}
\left< W^\Vert\right>^{d-1-\frac{1}{2}\delta } \\
& \lesssim \sum_{k=1}^\infty 2^{-k-1} \times
(2^{-k})^{d-2} \times (2^{k+1})^{d-1-\frac{1}{2}\delta} <\infty
\end{aligned}
\end{equation}
\begin{equation}
\begin{aligned}
I_3^{\prime \prime \prime} & \lesssim \sum_{k=1}^\infty
\int_{\omega : 2^{-k-1} |W^\bot| \leq |W^\Vert| < 2^{-k} |W^\bot|}
d\omega \left< W^\bot\right>^{\frac{1}{2}\delta}
\left< W^\Vert\right>^{d-2\beta-\delta} \\
& \lesssim \sum_{k=1}^\infty 2^{-k-1} \times (2^{k+1})^{\frac{1}{2}\delta}
 <\infty
\end{aligned}
\end{equation}
The factor of $(2^{-k})^{d-2}$ in $I_3^{\prime \prime}$ comes from
the Jacobian for spherical coordinates in $\mathbb{R}^d$.

We finally turn to $I_1$, which is clearly bounded by the following
quantity:
\begin{equation}
I_1 \lesssim \sup_{W\in\mathbb{R}^d} 
\sup_{P\subset \mathbb{R}^d : \dim P = d-1}
\int_{P} dS(s) \frac{\left<W\right>^{2\alpha}}
{\left<s\right>^{2\alpha} \left<s+W\right>^{2\alpha}}
\end{equation}
The integrals over $P\cap \left\{ |s|<\frac{1}{2} |W|\right\}$,
 $P\cap \left\{ |s| > 2 |W|\right\}$, and
$P \cap \left\{ \frac{1}{2} |W| \leq |s| \leq 2|W|\right\}$
 are each easily bounded uniformly
in $W$ as long as $\alpha > \frac{d-1}{2}$.

To summarize, as long as $\alpha > \frac{d-1}{2}$, $\beta > d$,
and $\max (0,2A-1) < \frac{1}{\sigma} \leq 2$, then for 
$r\in [0,1)$ such that
$\frac{r}{\sigma} \geq \max(0,2A-1+\delta)$ for a small $\delta>0$
 we have for any $\kappa_0 > \kappa > 0$ the following estimate:
\begin{equation}
\begin{aligned}
& \left\Vert B^+_{i,k+1} \left[ 
e^{\frac{1}{2} i t \left( \Delta_{X_{k+1}} - \Delta_{X_{k+1}^\prime}\right)}
\gamma_0^{(k+1)} \right] \right\Vert_{L^2_t 
H^{\alpha,\beta,\sigma,\kappa}_k } \leq \\
& \qquad \qquad \qquad \qquad  \leq
C(\alpha,\beta,\sigma,r) \left\Vert \mathbf{b} \right\Vert_{L^\infty_A}
\left(\kappa_0-\kappa\right)^{-\frac{1}{2}r}
 \left\Vert \gamma_0^{(k+1)} 
\right\Vert_{H^{\alpha,\beta,\sigma,\kappa_0}_{k+1}}
\end{aligned}
\end{equation}

\section{Proof of Theorem \ref{thm:BE-LWP}}
\label{sec:3}

Formally speaking, solutions of Boltzmann's equation are factorized
solutions of the Boltzmann hierarchy, i.e. 
$\gamma^{(k)} = \gamma^{\otimes k}$. We use the notation
\begin{equation}
\Delta_{\pm}^{(k)} = \Delta_{X_{k}}-\Delta_{X_k^\prime}
\end{equation}
\begin{equation}
\Delta_{\pm} = \Delta_{\pm}^{(1)}
\end{equation}
Then if $B_{k+1} = \sum_{i=1}^k \left( B_{i,k+1}^+ - B_{i,k+1}^-\right)$,
 the Boltzmann hierarchy in integral form reads as follows:
\begin{equation}
\gamma^{(k)} (t) = e^{\frac{1}{2} i t \Delta_{\pm}^{(k)}} 
\gamma^{(k)} (0) - i \int_0^t 
e^{\frac{1}{2} i (t-t_1) \Delta_{\pm}^{(k)}}
B_{k+1} \gamma^{(k+1)} (t_1) dt_1
\end{equation}
Let us assume $\gamma^{(k)} = \gamma^{\otimes k}$ for all $k\in\mathbb{N}$
and consider the Boltzmann hierarchy for $k=1,2$:
\begin{equation}
\gamma (t) = e^{\frac{1}{2} i t \Delta_{\pm}} 
\gamma (0)  - i \int_0^t 
e^{\frac{1}{2} i (t-t_1) \Delta_{\pm}}
B_{2} \left( \gamma^{\otimes 2}\right) (t_1) dt_1
\end{equation}
\begin{equation}
\left(\gamma\otimes \gamma\right) (t) =
 e^{\frac{1}{2} i t \Delta_{\pm}^{(2)}} 
\left( \gamma\otimes\gamma\right) (0) - i\int_0^t 
e^{\frac{1}{2} i (t-t_1) \Delta_{\pm}^{(2)}}
B_{3} \left( \gamma^{\otimes 3}\right) (t_1) dt_1
\end{equation}
Now we apply the operator $B_2$ to the second equation, thereby
obtaining the following system:
\begin{equation}
\gamma (t) = e^{\frac{1}{2} i t \Delta_{\pm}} 
\gamma (0) - i \int_0^t 
e^{\frac{1}{2} i (t-t_1) \Delta_{\pm}}
B_{2} \left( \gamma^{\otimes 2}\right) (t_1) dt_1
\end{equation}
\begin{equation}
\begin{aligned}
& B_2 \left(\gamma^{\otimes 2}\right) (t) = \\
& \qquad  = B_2\left( e^{\frac{1}{2} i t \Delta_{\pm}^{(2)}} 
\left( \gamma^{\otimes 2} \right) (0)\right) - i \int_0^t 
B_2 \left[ e^{\frac{1}{2} i (t-t_1) \Delta_{\pm}^{(2)}}
B_{3} \left( \gamma^{\otimes 3}\right) (t_1) \right] dt_1
\end{aligned}
\end{equation}
Let us observe that $B(\gamma_1,\gamma_2) = 
B_2 (\gamma_1 \otimes \gamma_2)$. Therefore if we define
$\zeta (t) = B \left( \gamma(t),\gamma(t)\right)$ then we obtain
the following system of equations for the pair 
$\left( \gamma,\zeta\right)$:
\begin{equation}
\gamma (t) = e^{\frac{1}{2} i t \Delta_{\pm}} 
\gamma (0) - i \int_0^t 
e^{\frac{1}{2} i (t-t_1) \Delta_{\pm}}
\zeta (t_1) dt_1
\end{equation}
\begin{equation}
\begin{aligned}
\zeta  (t) =
& B \left( e^{\frac{1}{2} i t \Delta_{\pm}}\gamma (0),
e^{\frac{1}{2} i t \Delta_{\pm} }\gamma(0)\right) +\\
& \qquad \quad + (-i) \int_0^t 
B \left( e^{\frac{1}{2} i (t-t_1) \Delta_{\pm}}\gamma(t_1),
e^{\frac{1}{2} i (t-t_1) \Delta_{\pm}} \zeta(t_1)\right) dt_1 +\\
& \qquad \quad +  (-i) \int_0^t 
B \left( e^{\frac{1}{2} i (t-t_1) \Delta_{\pm}}\zeta(t_1),
e^{\frac{1}{2} i (t-t_1) \Delta_{\pm}} \gamma(t_1)\right) dt_1
\end{aligned}
\end{equation}
We will solve this simultaneous system of equations for
$(\gamma(t),\zeta(t))$ on a small time interval $[0,T]$ by Picard
iteration, using the following norm:
\begin{equation}
\left\Vert (\gamma,\zeta)\right\Vert =
T^{\frac{1}{2} (1-r)}  \left\Vert \left\Vert \gamma (t) 
\right\Vert_{H^{\alpha,\beta,\sigma,\kappa-\lambda t}}
\right\Vert_{L^\infty_T}+
\left\Vert \left\Vert
\zeta(t)\right\Vert_{H^{\alpha,\beta,\sigma,\kappa-\lambda t}}
\right\Vert_{L^1_T}
\end{equation}
Here we have fixed some $r\in [0,1)$ as in the statement of
Proposition \ref{prop:spacetime-est}.
The key result we will use is that Proposition
\ref{prop:spacetime-est} implies the following bilinear estimates:
\begin{equation}
\label{eq:bilinear}
\begin{aligned}
& \left\Vert B \left( e^{\frac{1}{2} i t \Delta_{\pm}} \gamma_{0,1},
e^{\frac{1}{2} i t \Delta_{\pm}} \gamma_{0,2} \right) 
\right\Vert_{L^2_t H^{\alpha,\beta,\sigma,\kappa_1}} \leq \\
& \qquad \qquad \qquad \qquad 
\leq C \left( 1 + (\kappa_0-\kappa_1)^{-\frac{1}{2} r}\right)
\left\Vert \gamma_{0,1} \right\Vert_{H^{\alpha,\beta,\sigma,\kappa_0}}
\left\Vert \gamma_{0,2} \right\Vert_{H^{\alpha,\beta,\sigma,\kappa_0}}
\end{aligned}
\end{equation}

To set up the fixed point iteration, we fix the initial data
$\gamma_0 \in H^{\alpha,\beta,\sigma,\kappa}$ and define the map
$\Phi = \left(\Phi_1,\Phi_2\right) (\gamma,\zeta)$ as follows:
\begin{equation}
\label{eq:Phi1}
\left[ \Phi_1 (\gamma,\zeta)\right]
 (t) = e^{\frac{1}{2} i t \Delta_{\pm}} 
\gamma_0 - i \int_0^t 
e^{\frac{1}{2} i (t-t_1) \Delta_{\pm}}
\zeta (t_1) dt_1
\end{equation}
\begin{equation}
\label{eq:Phi2}
\begin{aligned}
\left[ \Phi_2 (\gamma,\zeta)\right]  (t) =
& B \left( e^{\frac{1}{2} i t \Delta_{\pm}}\gamma_0,
e^{\frac{1}{2} i t \Delta_{\pm} }\gamma_0\right) +\\
& \qquad + (-i) \int_0^t 
B \left( e^{\frac{1}{2} i (t-t_1) \Delta_{\pm}}\gamma(t_1),
e^{\frac{1}{2} i (t-t_1) \Delta_{\pm}} \zeta(t_1)\right) dt_1 +\\
& \qquad  +  (-i) \int_0^t 
B \left( e^{\frac{1}{2} i (t-t_1) \Delta_{\pm}}\zeta(t_1),
e^{\frac{1}{2} i (t-t_1) \Delta_{\pm}} \gamma(t_1)\right) dt_1
\end{aligned}
\end{equation}
We wish to solve the equation $(\gamma,\zeta) = \Phi (\gamma,\zeta)$.

First, using (\ref{eq:Phi1}) and the fact that the propagator
$e^{\frac{1}{2} i t \Delta_{\pm}}$ preserves the space
$H^{\alpha,\beta,\sigma,\kappa}$, along with the embedding
$H^{\alpha,\beta,\sigma,\kappa_0} \subset
H^{\alpha,\beta,\sigma,\kappa_1}$ for $\kappa_0 > \kappa_1 > 0$, we easily
obtain:
\begin{equation}
\label{eq:Phi1-bound}
\left\Vert\left\Vert \left[ \Phi_1 (\gamma,\zeta)\right](t)
\right\Vert_{H^{\alpha,\beta,\sigma,\kappa-\lambda t}}
\right\Vert_{L^\infty_T} \leq
\left\Vert \gamma_0 \right\Vert_{H^{\alpha,\beta,\sigma,\kappa}}
+ \left\Vert \left\Vert \zeta(t) 
\right\Vert_{H^{\alpha,\beta,\sigma,\kappa-\lambda t}}
\right\Vert_{L^1_T}
\end{equation}

We now turn to $\Phi_2$. We begin by estimating the first term on
the right hand side of (\ref{eq:Phi2}).  We will use a dyadic 
decomposition in time:
\begin{equation*}
\begin{aligned}
& \left\Vert \left\Vert
B\left( e^{\frac{1}{2} i t \Delta_{\pm}} \gamma_0,
e^{\frac{1}{2} i t \Delta_{\pm}} \gamma_0\right)
\right\Vert_{H^{\alpha,\beta,\sigma,\kappa-\lambda t}}
\right\Vert_{L^1_T} = \\
& =  \sum_{m=0}^\infty \int_{2^{-m-1} T < t \leq 2^{-m} T}
\left\Vert B \left( e^{\frac{1}{2} i t \Delta_{\pm}} \gamma_0,
e^{\frac{1}{2} i t \Delta_{\pm}} \gamma_0\right)
\right\Vert_{H^{\alpha,\beta,\sigma,\kappa-\lambda t}} dt\\
& \leq  \sum_{m=0}^\infty \int_{2^{-m-1} T < t \leq 2^{-m} T}
\left\Vert B \left( e^{\frac{1}{2} i t \Delta_{\pm}} \gamma_0,
e^{\frac{1}{2} i t \Delta_{\pm}} \gamma_0\right)
\right\Vert_{H^{\alpha,\beta,\sigma,\kappa-\lambda 2^{-m-1}T}} dt
\end{aligned}
\end{equation*}
Now apply the Cauchy-Schwarz inequality, followed by
(\ref{eq:bilinear}). We implicitly assume $\lambda T < 1$, which is
acceptable because we only want to address small times $T$ in any case.
\begin{equation*}
\begin{aligned}
& \left\Vert \left\Vert
B\left( e^{\frac{1}{2} i t \Delta_{\pm}} \gamma_0,
e^{\frac{1}{2} i t \Delta_{\pm}} \gamma_0\right)
\right\Vert_{H^{\alpha,\beta,\sigma,\kappa-\lambda t}}
\right\Vert_{L^1_T} \leq \\
& \leq  \sum_{m=0}^\infty 
\left( 2^{-m-1} T\right)^{\frac{1}{2}}
\left\Vert B \left( e^{\frac{1}{2} i t \Delta_{\pm}} \gamma_0,
e^{\frac{1}{2} i t \Delta_{\pm}} \gamma_0\right)
\right\Vert_{L^2_t H^{\alpha,\beta,\sigma,\kappa-\lambda 2^{-m-1}T}}\\
& \leq \sum_{m=0}^\infty
\left( 2^{-m-1}T\right)^{\frac{1}{2}}
\frac{C}{\left(\lambda 2^{-m-1} T\right)^{\frac{1}{2}r}}
\left\Vert \gamma_0 \right\Vert_{H^{\alpha,\beta,\sigma,\kappa}}^2\\
& \leq C\lambda^{-\frac{1}{2} r} T^{\frac{1}{2} (1-r)}
\left( \sum_{m=0}^\infty 2^{-\frac{1}{2} m (1-r)}\right)
\left\Vert \gamma_0 \right\Vert_{H^{\alpha,\beta,\sigma,\kappa}}^2
\end{aligned}
\end{equation*}
We now estimate the second term on the right hand side of
(\ref{eq:Phi2}); the third term is handled similarly. We will employ
a dyadic decomposition in $t-t_1$ and apply Cauchy-Schwarz and
(\ref{eq:bilinear}) as before.
\begin{equation*}
\begin{aligned}
& \left\Vert \left\Vert \int_0^t
B\left( e^{\frac{1}{2} i (t-t_1)\Delta_{\pm}} \gamma(t_1),
e^{\frac{1}{2} i (t-t_1) \Delta_{\pm}} \zeta(t_1)\right) dt_1
\right\Vert_{H^{\alpha,\beta,\sigma,\kappa-\lambda t}} \right\Vert_{L^1_T}
\leq \\
& \leq \sum_{m=0}^\infty \int_0^T  dt_1
\int_{t_1+2^{-m-1}T < t \leq t_1+2^{-m} T}dt \times \\
& \qquad  \qquad \times
 \left\Vert B \left( e^{\frac{1}{2} i (t-t_1) \Delta_{\pm}}\gamma(t_1),
e^{\frac{1}{2} i (t-t_1) \Delta_{\pm}} \zeta(t_1)\right)
\right\Vert_{H^{\alpha,\beta,\sigma,\kappa-\lambda t}} \\
& \leq  \sum_{m=0}^\infty \int_0^T  dt_1
\int_{t_1+2^{-m-1}T < t \leq t_1+2^{-m} T}dt \times \\
& \qquad  \qquad  \times
 \left\Vert B \left( e^{\frac{1}{2} i (t-t_1) \Delta_{\pm}}\gamma(t_1),
e^{\frac{1}{2} i (t-t_1) \Delta_{\pm}} \zeta(t_1)\right)
\right\Vert_{H^{\alpha,\beta,\sigma,\kappa-\lambda (t_1+2^{-m-1}T)}} \\
& \leq  \sum_{m=0}^\infty \left( 2^{-m-1} T\right)^{\frac{1}{2}}
\int_0^T  dt_1
 \times \\
& \qquad  \qquad  \times
 \left\Vert B \left( e^{\frac{1}{2} i (t-t_1) \Delta_{\pm}}\gamma(t_1),
e^{\frac{1}{2} i (t-t_1) \Delta_{\pm}} \zeta(t_1)\right)
\right\Vert_{L^2_t
 H^{\alpha,\beta,\sigma,\kappa-\lambda (t_1+2^{-m-1}T)}} \\
& \leq  \sum_{m=0}^\infty \left( 2^{-m-1} T\right)^{\frac{1}{2}}
\times C \left( \lambda 2^{-m-1} T \right)^{-\frac{1}{2}r}\times\\
& \qquad \qquad \times \int_0^T  dt_1
 \left\Vert \gamma(t_1) \right\Vert_{H^{\alpha,\beta,\sigma,
\kappa-\lambda t_1}}
\left\Vert \zeta(t_1) \right\Vert_{H^{\alpha,\beta,\sigma,
\kappa-\lambda t_1}} \\
& \leq C \lambda^{-\frac{1}{2} r}
T^{\frac{1}{2} (1-r)} \left( \sum_{m=0}^\infty
2^{-\frac{1}{2} m (1-r)} \right) \times \\
& \qquad \qquad \times 
\left\Vert \left\Vert \gamma(t) \right\Vert_{H^{\alpha,\beta,\sigma,
\kappa-\lambda t}} \right\Vert_{L^\infty_T}
\left\Vert \left\Vert \zeta(t) \right\Vert_{H^{\alpha,\beta,\sigma,
\kappa-\lambda t}} \right\Vert_{L^1_T}
\end{aligned}
\end{equation*}
We can finally conclude the following estimate for $\Phi_2$:
\begin{equation}
\label{eq:Phi2-bound}
\begin{aligned}
& \left\Vert \left\Vert \left[ \Phi_2 (\gamma,\zeta)\right] (t)
\right\Vert_{H^{\alpha,\beta,\sigma,\kappa-\lambda t}}
\right\Vert_{L^1_T} \leq
C \lambda^{-\frac{1}{2} r} T^{\frac{1}{2} (1-r)}
\left( \sum_{m=0}^\infty 2^{-\frac{1}{2} m (1-r)}\right) \times \\
& \times \left( \left\Vert \gamma_0 \right\Vert_{H^{\alpha,\beta,\sigma,
\kappa}}^2 +
\left\Vert \left\Vert \gamma(t) \right\Vert_{H^{\alpha,\beta,\sigma,
\kappa-\lambda t}} \right\Vert_{L^\infty_T}
\left\Vert \left\Vert \zeta(t) \right\Vert_{H^{\alpha,\beta,\sigma,
\kappa-\lambda t}} \right\Vert_{L^1_T}\right)
\end{aligned}
\end{equation}

Combining (\ref{eq:Phi1-bound}) and (\ref{eq:Phi2-bound}), and defining
$C_{\lambda,r} = C \lambda^{-\frac{1}{2} r} \sum_{m=0}^\infty
2^{-\frac{1}{2} m (1-r)}$, we obtain:
\begin{equation}
\label{eq:Phi-bound}
\begin{aligned}
& \left\Vert \Phi (\gamma,\zeta)\right\Vert \leq
T^{\frac{1}{2} (1-r)} \left\Vert \gamma_0 \right\Vert_{H^{\alpha,\beta,
\sigma,\kappa}} + C_{\lambda,r}
T^{\frac{1}{2} (1-r)} \left\Vert \gamma_0
\right\Vert_{H^{\alpha,\beta,\sigma,\kappa}}^2 + \\
& \qquad \qquad \qquad \qquad \qquad \qquad 
+ T^{\frac{1}{2} (1-r)} \left\Vert (\gamma,\zeta)\right\Vert +
C_{\lambda,r} \left\Vert (\gamma,\zeta)\right\Vert^2
\end{aligned}
\end{equation}
By a completely analogous argument, we obtain the following
continuity bound:
\begin{equation}
\label{eq:Phi-continuity}
\begin{aligned}
& \left\Vert \Phi (\gamma_1,\zeta_1) - \Phi (\gamma_2,
\zeta_2)\right\Vert \leq \\
& \qquad 
\leq \left( T^{\frac{1}{2} (1-r)} + 4 
C_{\lambda,r} \left( \left\Vert (\gamma_1,\zeta_1)\right\Vert +
\left\Vert (\gamma_2,\zeta_2) \right\Vert \right)
\right) \left\Vert (\gamma_1,\zeta_1) - 
(\gamma_2,\zeta_2) \right\Vert
\end{aligned}
\end{equation}
Combining (\ref{eq:Phi-bound}) and (\ref{eq:Phi-continuity}),
and applying the Banach fixed point theorem, we conclude the existence
and uniqueness of a solution to the fixed point equation
$(\gamma,\zeta)=\Phi(\gamma,\zeta)$ once $T$ is chosen sufficiently
small depending only on $\left\Vert \gamma_0 
\right\Vert_{H^{\alpha,\beta,\sigma,\kappa}}$. This gives us 
uniqueness under the assumption that $\left\Vert (\gamma,\zeta)\right\Vert$
is small, but in fact for any solution we can apply
(\ref{eq:Phi-bound}) and a standard continuity argument to conclude
that $\left\Vert (\gamma,\zeta)\right\Vert$ is necessarily small if
$T$ is small, as long as it is finite for some positive $T$.
The estimate (\ref{eq:BE-est}) follows directly from
(\ref{eq:Phi-bound}).

Finally we remark that if $A \in \left[ 0,\frac{1}{2}\right)$ then
we may take $r=0$, so that $C_{\lambda,r}$ loses its dependence on
$\lambda$; hence, we are allowed to take $\lambda = 0$ and we can permit
any $\kappa \in [0,\infty)$. The rest of
the proof proceeds in exactly the same manner.

\section{Proof of Theorem \ref{thm:BH-LWP}}
\label{sec:4}

Theorem \ref{thm:BH-LWP} follows from Proposition \ref{prop:spacetime-est}
combined with the arguments of Chen and Pavlovi{\' c}, \cite{PC2010},
which in turn rely on
the combinatorial arguments of
Erd{\" o}s-Schlein-Yau, \cite{ES2001,ESY2006,ESY2007}, in the boardgame
representation given by Klainerman and Machedon in \cite{KM2008}. We
outline the proof here for the reader's convenience.

To begin, we point out that the Boltzmann hierarchy may be written
in vector integral form as follows:
\begin{equation}
\Gamma (t) = e^{\frac{1}{2} i t \hat{\Delta}_{\pm}} \Gamma(0)
- i \int_0^t e^{\frac{1}{2}i (t-t_1) \hat{\Delta}_{\pm}}
B \Gamma (t_1) dt_1
\end{equation}
where $\hat{\Delta}_\pm \Gamma = \left\{
\left( \Delta_{X_k}-\Delta_{X_k^\prime}\right) \gamma^{(k)}
\right\}_{k\in\mathbb{N}}$ and
$B\Gamma = \left\{ B_{k+1} \gamma^{(k+1)} \right\}_{k\in\mathbb{N}}$.
We can apply $B$ to both sides to yield a closed equation for
$B\Gamma$:
\begin{equation}
B\Gamma (t) = B\left[e^{\frac{1}{2} i t \hat{\Delta}_{\pm}} \Gamma(0)\right]
- i\int_0^t B\left[ e^{\frac{1}{2} i(t-t_1) \hat{\Delta}_{\pm}}
B \Gamma (t_1)\right] dt_1
\end{equation}
Letting $\Xi = B\Gamma$, we conclude that the pair
$\left( \Gamma, \Xi\right)$ satisfies the following system of
equations:
\begin{equation}
\label{eq:Gamma-1}
\Gamma (t) = e^{\frac{1}{2} i t \hat{\Delta}_{\pm}} \Gamma(0)
- i \int_0^t e^{\frac{1}{2}i (t-t_1) \hat{\Delta}_{\pm}}
\Xi (t_1) dt_1
\end{equation}
\begin{equation}
\label{eq:Xi-1}
\Xi (t) = B\left[e^{\frac{1}{2} i t \hat{\Delta}_{\pm}} \Gamma(0)\right]
- i \int_0^t B\left[ e^{\frac{1}{2} i(t-t_1) \hat{\Delta}_{\pm}}
\Xi (t_1)\right] dt_1
\end{equation}
and this system is equivalent to the original Boltzmann hierarchy.

Since (\ref{eq:Xi-1}) is a closed equation for $\Xi$, we proceed in
two steps. First we solve (\ref{eq:Xi-1}) on a small time interval
$[0,T]$ by Picard iteration; then, we establish that the right-hand
side of (\ref{eq:Gamma-1}) is indeed well-defined in the correct
functional space. 
The proof proceeds by iterating the Duhamel formula
(\ref{eq:Xi-1}), $k$ times for the $k$th component,
and applying the combinatorial methods of
Erd{\" o}s, Schlein and Yau,
\cite{ES2001,ESY2006,ESY2007}, expressed in boardgame form
by Klainerman and Machedon \cite{KM2008}.  Then we conclude by
applying Proposition \ref{prop:spacetime-est} inductively to
bound all the terms (which are now $\mathcal{O}(C^k)$ in number
instead of (even more than)
$ \mathcal{O}(k!)$ due to the combinatorial methods of
\cite{ES2001,ESY2006,ESY2007}). The precise details are written
out in \cite{PC2010} for the interested reader.

\begin{remark}
Note that if $A\geq \frac{1}{2}$, then the combinatorial methods
of \cite{ES2001,ESY2006,ESY2007}, and the reformulation in terms
of a boardgame argument \cite{KM2008}, all still apply at the formal
level. However, the termwise estimates of \cite{PC2010} no longer apply
uniformly across general
 re-ordering of collision times. This is simply not an
issue when $A<\frac{1}{2}$ since no time-dependent
 loss of weight is required in that
case. Most likely, if LWP holds at all for the hard sphere Boltzmann
hierarchy for the spaces we consider, then completely new estimates
(different from Proposition \ref{prop:spacetime-est}) will be
required.
\end{remark}

\begin{appendix}
\section{Inverse Wigner Transform of the Boltzmann Equation and 
Boltzmann Hierarchy}
\label{sec:appendix-A}

We begin with the Boltzmann hierarchy.

\begin{proposition}
\label{prop:App-BH-TR}
Let $f^{(k)} \in L^1 \left( [0,T],
L^2 (\mathbb{R}^{dk} \times \mathbb{R}^{dk})\right)$ and let
$\gamma^{(k)}$ denote the inverse Wigner transform of $f^{(k)}$.
Then if
\begin{equation}
\left( \partial_t + V_k \cdot \nabla_{X_k}\right) f^{(k)} =
g^{(k)}
\end{equation}
holds in the sense of distributions, then we have
\begin{equation}
\begin{aligned}
& \left( i \partial_t + \frac{1}{2} \left(\Delta_{X_k} -
\Delta_{X_k^\prime}\right)\right) \gamma^{(k)} (t,X_k,X_k^\prime) = \\
& \qquad \qquad \qquad \qquad \qquad = 
i \int_{\mathbb{R}^{dk}} g^{(k)} \left( t,
\frac{X_k + X_k^\prime}{2},V_k\right) 
e^{i V_k \cdot (X_k - X_k^\prime)} dV_k
\end{aligned}
\end{equation}
in the sense of distributions.
\end{proposition}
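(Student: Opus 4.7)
The plan is to tensorize, directly and routinely, the single-particle computation already carried out in the preliminary Lemma of Section 2, using no new ideas beyond multi-index bookkeeping. Concretely, I would interpret the target identity distributionally by testing both sides against an arbitrary $u(t,X_k,X_k^\prime)\in\mathcal{C}_0^\infty(\mathbb{R}\times\mathbb{R}^{dk}\times\mathbb{R}^{dk})$; the assumed integrability $f^{(k)}\in L^1_t L^2_{X_k,V_k}$ together with Fubini then legitimizes every interchange of differentiation with the $V_k$-integral in the definition (\ref{eq:inv-wigner}) of $\gamma^{(k)}$.

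First the $\partial_t$ operator commutes with the $V_k$-integration and produces $i\partial_t\gamma^{(k)}$, while the source $g^{(k)}$ yields exactly the integral on the right-hand side of the stated identity. The work is in the transport term. By the chain rule applied at the midpoint $(X_k+X_k^\prime)/2$,
\begin{equation*}
(\nabla_x f^{(k)})\!\left(t,\tfrac{X_k+X_k^\prime}{2},V_k\right) = (\nabla_{X_k}+\nabla_{X_k^\prime})\!\left[f^{(k)}\!\left(t,\tfrac{X_k+X_k^\prime}{2},V_k\right)\right],
\end{equation*}
so the contribution of $-iV_k\cdot\nabla_x f^{(k)}$ to the inverse-Wigner-transformed equation is
\begin{equation*}
-(\nabla_{X_k}+\nabla_{X_k^\prime})\cdot\int_{\mathbb{R}^{dk}} f^{(k)}\!\left(t,\tfrac{X_k+X_k^\prime}{2},V_k\right) iV_k\, e^{iV_k\cdot(X_k-X_k^\prime)}\,dV_k.
\end{equation*}
Next I would use the elementary identity $iV_k\,e^{iV_k\cdot(X_k-X_k^\prime)} = \tfrac{1}{2}(\nabla_{X_k}-\nabla_{X_k^\prime})e^{iV_k\cdot(X_k-X_k^\prime)}$ to pull a second gradient operator outside the integral, converting the expression into
\begin{equation*}
-\tfrac{1}{2}(\nabla_{X_k}+\nabla_{X_k^\prime})\cdot(\nabla_{X_k}-\nabla_{X_k^\prime})\,\gamma^{(k)} = -\tfrac{1}{2}(\Delta_{X_k}-\Delta_{X_k^\prime})\,\gamma^{(k)},
\end{equation*}
which is exactly the operator appearing on the left-hand side of the claim. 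Combining with the $\partial_t$ and $g^{(k)}$ contributions gives the stated equation.

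Because all intermediate operations are linear in $f^{(k)}$ and amount to moving finitely many derivatives across a smooth compactly supported test function, no serious analytic obstacle arises; the only point that needs care is that each manipulation be executed at the distributional level by integration by parts against $u$, after which every resulting integral is absolutely convergent by the $L^1_t L^2_{X_k,V_k}$ hypothesis. The proof is therefore a direct transcription of the one-particle lemma with the scalar variables $x,x^\prime,v$ replaced by the multi-indices $X_k,X_k^\prime,V_k$; the absence of any inter-particle coupling in the free transport operator means no new combinatorial issues appear.
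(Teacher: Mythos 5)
Your proof is correct and follows essentially the same route as the paper's: after swapping $\partial_t$ with the $V_k$-integral and isolating the transport term, you apply the chain rule at the midpoint $(X_k+X_k^\prime)/2$ to produce $(\nabla_{X_k}+\nabla_{X_k^\prime})$, then represent $iV_k e^{iV_k\cdot(X_k-X_k^\prime)}$ as $\tfrac{1}{2}(\nabla_{X_k}-\nabla_{X_k^\prime})e^{iV_k\cdot(X_k-X_k^\prime)}$ and assemble the Laplacian difference. The only cosmetic difference is that you state explicitly that the manipulations should be read against a test function $u$; the paper leaves that implicit, as the computation is word-for-word the tensorized version of the single-particle lemma.
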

\begin{proof}
We have
\begin{equation*}
\begin{aligned}
& i \partial_t \gamma^{(k)} (t,X_k,X_k^\prime) = 
i \int_{\mathbb{R}^{dk}} \left( \partial_t f^{(k)}\right)\left( 
t,\frac{X_k+X_k^\prime}{2},V_k\right) e^{i V_k \cdot (X_k-X_k^\prime)} dV_k \\
& = i \int_{\mathbb{R}^{dk}} 
\left( -V_k\cdot \nabla_{X_k} f^{(k)} + g^{(k)} \right)
\left( t,\frac{X_k+X_k^\prime}{2},V_k\right) e^{iV_k\cdot(X_k-X_k^\prime)} dV_k
\end{aligned}
\end{equation*}
Consider the transport term alone.
\begin{equation*}
\begin{aligned}
& i \int 
\left( -V_k\cdot \nabla_{X_k} f^{(k)} \right)
\left( t,\frac{X_k+X_k^\prime}{2},V_k\right)e^{iV_k\cdot(X_k-X_k^\prime)}dV_k\\
& = - i \int
V_k \cdot ( \nabla_{X_k} + \nabla_{X_k^\prime} ) \left[ f^{(k)}
\left( t,\frac{X_k+X_k^\prime}{2},V_k\right)\right]
 e^{iV_k\cdot (X_k-X_k^\prime)} dV_k \\
& = - \int
( \nabla_{X_k} + \nabla_{X_k^\prime} ) \left[ f^{(k)}
\left( t,\frac{X_k+X_k^\prime}{2},v\right)\right] \cdot
i V_k e^{iV_k\cdot (X_k-X_k^\prime)} dV_k \\
& = - ( \nabla_{X_k} + \nabla_{X_k^\prime} ) \cdot \int
f^{(k)} \left( t,\frac{X_k+X_k^\prime}{2},V_k\right) 
i V_k e^{iV_k\cdot (X_k-X_k^\prime)} dV_k \\
& = - ( \nabla_{X_k} + \nabla_{X_k^\prime} ) \cdot \int
f^{(k)} \left( t,\frac{X_k+X_k^\prime}{2},V_k\right) 
\frac{\nabla_{X_k} - \nabla_{X_k^\prime}}{2}e^{iV_k\cdot(X_k-X_k^\prime)}dV_k\\
& = - ( \nabla_{X_k} + \nabla_{X_k^\prime} ) \cdot
\frac{\nabla_{X_k} - \nabla_{X_k^\prime}}{2}  \int 
f^{(k)} \left( t,\frac{X_k+X_k^\prime}{2},V_k\right) 
 e^{iV_k\cdot (X_k-X_k^\prime)} dV_k \\
& = -\frac{1}{2} \left( \Delta_{X_k} - \Delta_{X_k^\prime} \right)
\gamma^{(k)} (t,X_k,X_k^\prime)
\end{aligned}
\end{equation*}
Therefore,
\begin{equation}
\begin{aligned}
&\left( i \partial_t + \frac{1}{2}\left(\Delta_{X_k}-\Delta_{X_k^\prime}\right)
\right) \gamma^{(k)} (t,X_k,X_k^\prime) =\\
&\qquad \qquad \qquad =i \int_{\mathbb{R}^{dk}} g^{(k)}
 \left( t,\frac{X_k+X_k^\prime}{2},V_k\right)
e^{iV_k\cdot (X_k-X_k^\prime)} dV_k
\end{aligned}
\end{equation}
\end{proof}

\begin{proposition}
\label{prop:App-BH-Coll}
Let $f^{(k+1)} (X_{k+1},V_{k+1})$ be a Schwartz function, and let
$\gamma^{(k+1)}$ denote its inverse Wigner transform.
Then
\begin{equation}
\begin{aligned}
& i\int C^-_{1,k+1} f^{(k+1)} \left( \frac{X_k+X_k^\prime}{2},V_k\right)
e^{i V_k\cdot (X_k-X_k^\prime)} dV_k = \\
& = \frac{i}{ 2^{2d} \pi^{d}}
\int_{\mathbb{S}^{d-1}} d\omega \int_{\mathbb{R}^d} dz 
\hat{\mathbf{b}}^\omega
\left(\frac{z}{2}\right)
 \times \\
& \qquad \times \gamma^{(k+1)} \left( x_1-\frac{z}{4},X_{2:k},
\frac{x_1+x_1^\prime}{2}+\frac{z}{4}, x_1^\prime + \frac{z}{4},X_{2:k}^\prime,
\frac{x_1+x_1^\prime}{2}-\frac{z}{4} \right) \\
\end{aligned}
\end{equation}
and
\begin{equation}
\begin{aligned}
& i \int C^+_{1,k+1} f^{(k+1)} \left( \frac{X_k+X_k^\prime}{2},V_k\right)
e^{iV_k \cdot (X_k-X_k^\prime)} dV_k = \\ 
& =  \frac{i}{2^{2d} \pi^{d}}  \int_{\mathbb{S}^{d-1}}
 d\omega \int_{\mathbb{R}^d} dz
 \hat{\mathbf{b}}^\omega \left( \frac{z}{2} \right)  \times \\
& \qquad \times \gamma^{(k+1)} \left( x_1-\frac{1}{2} P_\omega
(x_1-x_1^\prime)-\frac{R_\omega (z)}{4}, X_{2:k}, \right. \\
& \qquad \qquad \qquad \qquad \qquad \qquad \left.
\frac{x_1+x_1^\prime}{2}+\frac{1}{2} P_\omega
(x_1-x_1^\prime)+\frac{R_\omega (z)}{4},\right.\\
& \qquad \qquad  \left. 
x_1^\prime + \frac{1}{2} P_\omega (x_1-x_1^\prime)+
\frac{R_\omega (z)}{4}, X_{2:k}^\prime, \right. \\
& \qquad \qquad \qquad \qquad \qquad \qquad \left.
\frac{x_1+x_1^\prime}{2}-\frac{1}{2} P_\omega
(x_1-x_1^\prime)-\frac{R_\omega (z)}{4}\right)  \\
\end{aligned}
\end{equation}
\end{proposition}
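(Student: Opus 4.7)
The plan is to express $f^{(k+1)}$ in terms of $\gamma^{(k+1)}$ using the Wigner transform formula and then carry out the resulting Gaussian-type integrals step by step, taking advantage of the delta functions that arise when integrating exponentials over momentum variables.

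For the loss term $C^-_{1,k+1}$, I would first substitute the Wigner representation of $f^{(k+1)}$ evaluated at the point $(\bar x_1, \bar X_{2:k}, \bar x_1, v_1, V_{2:k}, v_{k+1})$, where $\bar x_i = (x_i+x_i^\prime)/2$. This introduces an integration variable $Y_{k+1}=(y_1,\dots,y_{k+1})$ dual to the velocities. Multiplying by $e^{iV_k\cdot(X_k-X_k^\prime)}$ and integrating out $V_{2:k}$ produces the delta functions $\delta(y_i-(x_i-x_i^\prime))$ for $i=2,\dots,k$, which collapse the slots $2,\dots,k$ of $\gamma^{(k+1)}$ to the desired positions $(x_i,x_i^\prime)$. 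It then remains to integrate in $v_1,v_{k+1}$; introducing $u = v_{k+1}-v_1$, the $v_{k+1}$-integral yields the delta function $\delta(x_1-x_1^\prime-y_1-y_{k+1})$ (so $y_{k+1}$ is eliminated in favor of $y_1$), and the $u$-integral exactly reproduces $\hat{\mathbf{b}}^\omega(x_1-x_1^\prime-y_1)$. A final change of variable $z = 2(x_1-x_1^\prime-y_1)$, with Jacobian $2^{-d}$, combines with the $(2\pi)^{-d}$ prefactor to give $i/(2^{2d}\pi^d)$, and matches the claimed arguments of $\gamma^{(k+1)}$.

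For the gain term $C^+_{1,k+1}$, I would follow the same template, the only difference being that $f^{(k+1)}$ is evaluated at $(\ldots,v_1^*,V_{2:k},v_{k+1}^*)$. After reducing the $V_{2:k}$-integration as before, I would change variables $(v_1,v_{k+1})\to(w_1,w_2):=(v_1^*,v_{k+1}^*)$. Because this map is an involution, its Jacobian is $\pm 1$ and its inverse is $v_1=(I-P_\omega)w_1+P_\omega w_2$, $v_{k+1}=(I-P_\omega)w_2+P_\omega w_1$. A direct computation then gives $v_{k+1}-v_1=R_\omega(w_2-w_1)$, so $|v_{k+1}-v_1|=|w_2-w_1|$ while $\omega\cdot(v_{k+1}-v_1)/|v_{k+1}-v_1|=-\omega\cdot(w_2-w_1)/|w_2-w_1|$. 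The sign flip is absorbed by an additional substitution $u\mapsto R_\omega u$ in the analogue of the $u$-integral, which converts the Fourier transform into $\hat{\mathbf{b}}^\omega(R_\omega(\,\cdot\,))$. The centre-of-mass integration again forces $y_1+y_{k+1}=x_1-x_1^\prime$, and renaming $\tilde y_1 = R_\omega y_1$ (Jacobian $1$) cleans up the argument of $\hat{\mathbf{b}}^\omega$. Setting $z = 2\bigl((I-P_\omega)(x_1-x_1^\prime)-\tilde y_1\bigr)$ and using the identities $R_\omega(I-P_\omega)=I-P_\omega$ together with $\bar x_1 + \tfrac{1}{2}(I-P_\omega)(x_1-x_1^\prime) = x_1 - \tfrac{1}{2}P_\omega(x_1-x_1^\prime)$ yields exactly the stated positions of $\gamma^{(k+1)}$ in all four slots.

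The main obstacle is the bookkeeping for the gain term: keeping track of how $P_\omega$ and $R_\omega$ propagate from the collision map $(v_1,v_{k+1})\mapsto(v_1^*,v_{k+1}^*)$, through the involution used to rewrite the exponential weights, and finally into the arguments of $\gamma^{(k+1)}$ after the $R_\omega$-reflection needed to present the kernel as $\hat{\mathbf{b}}^\omega(z/2)$ rather than $\hat{\mathbf{b}}^\omega(R_\omega z/2)$. All manipulations with delta functions and oscillatory integrals are justified rigorously by the Schwartz assumption on $f^{(k+1)}$.
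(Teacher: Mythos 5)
Your proposal is correct, and its overall skeleton (insert the Wigner representation of $f^{(k+1)}$, collapse the $V_{2:k}$ integration into delta functions, handle $v_1$ and $v_{k+1}$ via a linear change of variables, and read off the Fourier transform of $\mathbf{b}$) matches the paper's proof. Where you deviate is in the choice of velocity coordinates. For the loss term, you work with $(v_{k+1}, u)$ with $u = v_{k+1}-v_1$, whereas the paper uses the symmetric pair $u_1 = \tfrac12(v_{k+1}+v_1)$, $u_2 = \tfrac12(v_{k+1}-v_1)$; both give the same delta function and the same $\hat{\mathbf{b}}^\omega$, just with cosmetic differences in bookkeeping. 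The genuine divergence is in the gain term: you pass to the post-collisional velocities $(w_1, w_2) = (v_1^*, v_{k+1}^*)$, which is the classical pre/post change of variables from kinetic theory, and then you need the compensating substitutions $u \mapsto R_\omega u$ and $\tilde y_1 = R_\omega y_1$ to undo the sign flip $\omega\cdot(v_{k+1}-v_1)/|v_{k+1}-v_1| \mapsto -\omega\cdot(w_2-w_1)/|w_2-w_1|$ in the angular argument of $\mathbf{b}$. The paper instead stays with the pre-collisional velocities in the symmetric coordinates $(u_1,u_2)$, notices $v_1^* = u_1 - R_\omega u_2$ and $v_{k+1}^* = u_1 + R_\omega u_2$, and packages the $R_\omega$ directly into the phase so that no sign flip ever appears and no compensating $R_\omega$-substitution is needed. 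Your route buys you a familiar change of variables at the price of an extra pair of reflections; the paper's route is slightly slicker. The algebraic identities you invoke, $R_\omega(I-P_\omega) = I-P_\omega$ and $\bar x_1 + \tfrac12(I-P_\omega)(x_1-x_1') = x_1 - \tfrac12 P_\omega(x_1-x_1')$, do indeed reproduce the stated arguments of $\gamma^{(k+1)}$, and the Jacobian/constant accounting ($(2\pi)^{-d}\cdot 2^{-d} = 2^{-2d}\pi^{-d}$) is right.
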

\begin{proof}
Consider the loss term.
\begin{equation*}
\begin{aligned}
& i\int C^-_{1,k+1} f^{(k+1)} \left( \frac{X_k+X_k^\prime}{2},V_k\right)
e^{iV_k\cdot (X_k-X_k^\prime)} dV_k = \\
& = i \int
dV_{k+1} d\omega \mathbf{b} \left( |v_{k+1}-v_1|, \omega\cdot\frac{v_{k+1}-v_1}
{|v_{k+1}-v_1|} \right) \times \\
& \qquad \qquad  \times  e^{i V_k\cdot (X_k-X_k^\prime)}
f^{(k+1)} \left( \frac{X_k+X_k^\prime}{2},
\frac{x_1+x_1^\prime}{2},V_k,v_{k+1} \right) \\
& = i \frac{1}{(2\pi)^{d(k+1)}}
\int dV_{k+1} dY_{k+1} d\omega \mathbf{b} \left( |v_{k+1}-v_1|,
\omega\cdot\frac{v_{k+1}-v_1} {|v_{k+1}-v_1|} \right)\times \\
& \times e^{i V_k\cdot (X_k-X_k^\prime)}
e^{-i V_{k+1}\cdot Y_{k+1}} \times \\
& \times \gamma^{(k+1)} \left( \frac{X_k+X_k^\prime+Y_k}{2},
\frac{x_1+x_1^\prime+y_{k+1}}{2},
\frac{X_k+X_k^\prime-Y_k}{2},
\frac{x_1+x_1^\prime-y_{k+1}}{2} \right) \\
& = i \frac{1}{(2\pi)^{2d}}
\int dv_1 dv_{k+1} dy_1 dy_{k+1} d\omega 
\mathbf{b} \left( |v_{k+1}-v_1|, \omega\cdot\frac{v_{k+1}-v_1}
{|v_{k+1}-v_1|} \right) \times \\
& \times e^{i v_1 \cdot (x_1-x_1^\prime-y_1)}
e^{-i v_{k+1}\cdot y_{k+1}} \times \\
& \times \gamma^{(k+1)} \left( \frac{x_1+x_1^\prime+y_1}{2},X_{2:k},
\frac{x_1+x_1^\prime+y_{k+1}}{2}, \right. \\
& \qquad \qquad \qquad \qquad \qquad \left. \frac{x_1+x_1^\prime-y_1}{2},X_{2:k}^\prime,
\frac{x_1+x_1^\prime-y_{k+1}}{2} \right) \\
\end{aligned}
\end{equation*}

Use the change of variables $u_1 = \frac{1}{2} (v_{k+1} + v_1)$,
$u_2 = \frac{1}{2} (v_{k+1} - v_1)$.
\begin{equation*}
\begin{aligned}
& i\int C^-_{1,k+1} f^{(k+1)} \left( \frac{X_k+X_k^\prime}{2},V_k\right)
e^{iV_k \cdot (X_k-X_k^\prime)} dV_k = \\
& = i \frac{1}{2^d \pi^{2d}}
\int du_1 du_2 dy_1 dy_{k+1} d\omega 
\mathbf{b} \left( 2|u_2|, \omega\cdot\frac{u_2}{|u_2|}\right)
\times \\
& \qquad \qquad \times
e^{i u_1 \cdot (x_1-x_1^\prime-y_1-y_{k+1})}
e^{- i u_2 \cdot (x_1-x_1^\prime-y_1+y_{k+1})}
  \times \\
& \qquad \qquad \times \gamma^{(k+1)} \left( 
\frac{x_1+x_1^\prime+y_1}{2},X_{2:k},
\frac{x_1+x_1^\prime+y_{k+1}}{2}, \right. \\
& \qquad \qquad \qquad \qquad \qquad \left. \frac{x_1+x_1^\prime-y_1}{2},X_{2:k}^\prime,
\frac{x_1+x_1^\prime-y_{k+1}}{2} \right) \\
\end{aligned}
\end{equation*}
Now let $w = x_1-x_1^\prime-y_1-y_{k+1}$, $z=x_1-x_1^\prime-y_1+y_{k+1}$.
\begin{equation*}
\begin{aligned}
& i\int C^-_{1,k+1} f^{(k+1)} \left( \frac{X_k+X_k^\prime}{2},V_k\right)
e^{i V_k\cdot (X_k-X_k^\prime)} dV_k = \\
& = i \frac{1}{ (2 \pi)^{2d}}
\int du_1 du_2 dw dz d\omega \mathbf{b}
\left(2|u_2|,\omega\cdot\frac{u_2}{|u_2|}\right)
 e^{i u_1 \cdot w}
e^{- i u_2 \cdot z}  \times \\
& \qquad \qquad \times \gamma^{(k+1)} \left( x_1-\frac{w+z}{4},X_{2:k},
\frac{x_1+x_1^\prime}{2}-\frac{w-z}{4}, \right. \\
& \qquad \qquad \qquad \qquad \qquad \left. x_1^\prime + \frac{w+z}{4},X_{2:k}^\prime,
\frac{x_1+x_1^\prime}{2}+\frac{w-z}{4} \right) \\
\end{aligned}
\end{equation*}
Recall that
\begin{equation}
\hat{\mathbf{b}}^\omega (\xi) = \int_{\mathbb{R}^d}
 \mathbf{b} \left( |u|,\omega\cdot\frac{u}{|u|}\right)
e^{-iu \cdot \xi} du
\end{equation}
Then we have
\begin{equation*}
\begin{aligned}
& i\int C^-_{1,k+1} f^{(k+1)} \left( \frac{X_k+X_k^\prime}{2},V_k\right)
e^{i V_k\cdot (X_k-X_k^\prime)} dV_k = \\
& = \frac{i}{ 2^{2d} \pi^{d}}
\int_{\mathbb{S}^{d-1}} d\omega \int_{\mathbb{R}^d} dz 
\hat{\mathbf{b}}^\omega
\left(\frac{z}{2}\right)
 \times \\
& \qquad \times \gamma^{(k+1)} \left( x_1-\frac{z}{4},X_{2:k},
\frac{x_1+x_1^\prime}{2}+\frac{z}{4}, x_1^\prime + \frac{z}{4},X_{2:k}^\prime,
\frac{x_1+x_1^\prime}{2}-\frac{z}{4} \right) \\
\end{aligned}
\end{equation*}

Now the gain term.
\begin{equation*}
\begin{aligned}
& i \int C^+_{1,k+1}f^{(k+1)}
 \left( \frac{X_k+X_k^\prime}{2},V_k\right)
e^{iV_k\cdot (X_k-X_k^\prime)} dV_k = \\ 
& =  i \int
dV_{k+1} d\omega  \mathbf{b}
 \left( |v_{k+1}-v_1|, \omega\cdot\frac{v_{k+1}-v_1}
{|v_{k+1}-v_1|} \right)  e^{i V_k\cdot (X_k-X_k^\prime)}\times\\
&\qquad\qquad\times f^{(k+1)} \left( \frac{x_1+x_1^\prime}{2},
\frac{X_{2:k}+X_{2:k}^\prime}{2},\frac{x_1+x_1^\prime}{2},v_1^*,
V_{2:k},v_{k+1}^* \right) \\
& =  i \frac{1}{(2\pi)^{d(k+1)}} \int
dV_{k+1} dY_{k+1} d\omega  \mathbf{b} \left( |v_{k+1}-v_1|,
\omega\cdot\frac{v_{k+1}-v_1} {|v_{k+1}-v_1|} \right) \times \\
&  \times e^{i V_k \cdot (X_k-X_k^\prime)}
e^{-iV_{2:k}\cdot Y_{2:k}}
e^{-i (v_1+P_\omega (v_{k+1}-v_1)) \cdot y_1}
e^{-i (v_{k+1} - P_\omega (v_{k+1}-v_1)) \cdot y_{k+1}} \times \\
& \times \gamma^{(k+1)} \left( \frac{x_1+x_1^\prime+y_1}{2},
\frac{X_{2:k}+X_{2:k}^\prime+Y_{2:k}}{2},
\frac{x_1+x_1^\prime+y_{k+1}}{2},\right.\\
& \qquad \qquad \left. \frac{x_1+x_1^\prime-y_1}{2},
\frac{X_{2:k}+X_{2:k}^\prime-Y_{2:k}}{2},
\frac{x_1+x_1^\prime-y_{k+1}}{2}\right)  \\
& =  i \frac{1}{(2\pi)^{2d}} \int
dv_1 dv_{k+1} dy_1 dy_{k+1} d\omega  \mathbf{b} \left( |v_{k+1}-v_1|,
\omega\cdot\frac{v_{k+1}-v_1} {|v_{k+1}-v_1|} \right)  \times \\
&  \times e^{i v_1 \cdot (x_1-x_1^\prime)}
e^{-i (v_1+ P_\omega (v_{k+1}-v_1)) \cdot y_1}
e^{-i (v_{k+1} - P_\omega (v_{k+1}-v_1)) \cdot y_{k+1}} \times \\
& \times \gamma^{(k+1)} \left( \frac{x_1+x_1^\prime+y_1}{2}, X_{2:k},
\frac{x_1+x_1^\prime+y_{k+1}}{2},\right.\\
& \qquad \qquad \qquad \qquad \qquad \left. \frac{x_1+x_1^\prime-y_1}{2}, X_{2:k}^\prime,
\frac{x_1+x_1^\prime-y_{k+1}}{2}\right)  \\
\end{aligned}
\end{equation*}

Let $u_1 = \frac{1}{2} (v_{k+1} + v_1)$, $u_2 = \frac{1}{2} (v_{k+1} - v_1)$.
\begin{equation*}
\begin{aligned}
& i \int C^+_{1,k+1} f^{(k+1)} \left( \frac{X_k+X_k^\prime}{2},V_k\right)
e^{iV_k\cdot (X_k-X_k^\prime)} dV_k = \\ 
& =  i \frac{1}{2^d \pi^{2d}} \int
du_1 du_2 dy_1 dy_{k+1} d\omega \mathbf{b} \left( 2|u_2|,
\omega\cdot\frac{u_2}{|u_2|}\right)\times \\
& \qquad \times e^{i u_1 \cdot (x_1-x_1^\prime-y_1-y_{k+1})}
e^{-iu_2 \cdot (x_1-x_1^\prime-R_\omega
(y_1-y_{k+1}))} \times \\
& \qquad \times \gamma^{(k+1)} \left( \frac{x_1+x_1^\prime+y_1}{2}, X_{2:k},
\frac{x_1+x_1^\prime+y_{k+1}}{2},\right.\\
& \qquad \qquad \qquad \qquad \qquad \left. \frac{x_1+x_1^\prime-y_1}{2}, X_{2:k}^\prime,
\frac{x_1+x_1^\prime-y_{k+1}}{2}\right)  \\
\end{aligned}
\end{equation*}

Let $w = x_1-x_1^\prime-y_1-y_{k+1}$, $z=x_1-x_1^\prime-
R_\omega (y_1-y_{k+1})$.
\begin{equation*}
\begin{aligned}
& i \int C^+_{1,k+1} f^{(k+1)} \left( \frac{X_k+X_k^\prime}{2},V_k\right)
e^{iV_k \cdot (X_k-X_k^\prime)} dV_k = \\ 
& =  i \frac{1}{(2 \pi)^{2d}}  \int
du_1 du_2 dw dz d\omega \mathbf{b} \left( 2|u_2|,
\omega\cdot\frac{u_2}{|u_2|}\right) e^{i u_1 \cdot w}
e^{-iu_2 \cdot z} \times \\
& \qquad \times \gamma^{(k+1)} \left( x_1-\frac{1}{2} P_\omega
(x_1-x_1^\prime)-\frac{w+R_\omega (z)}{4}, X_{2:k}, \right. \\
& \qquad \qquad \qquad \qquad \qquad \qquad \left.
\frac{x_1+x_1^\prime}{2}+\frac{1}{2}P_\omega
(x_1-x_1^\prime)-\frac{w-R_\omega (z)}{4},\right.\\
& \qquad \qquad  \left. 
x_1^\prime + \frac{1}{2}P_\omega (x_1-x_1^\prime)+
\frac{w+R_\omega (z)}{4}, X_{2:k}^\prime, \right. \\
& \qquad \qquad \qquad \qquad \qquad \qquad \left.
\frac{x_1+x_1^\prime}{2}-\frac{1}{2} P_\omega 
(x_1-x_1^\prime)+\frac{w-R_\omega (z)}{4}\right)  \\
\end{aligned}
\end{equation*}
We have
\begin{equation*}
\begin{aligned}
& i \int C^+_{1,k+1} f^{(k+1)} \left( \frac{X_k+X_k^\prime}{2},V_k\right)
e^{iV_k \cdot (X_k-X_k^\prime)} dV_k = \\ 
& =  \frac{i}{2^{2d} \pi^{d}}  \int_{\mathbb{S}^{d-1}}
 d\omega \int_{\mathbb{R}^d} dz
 \hat{\mathbf{b}}^\omega \left( \frac{z}{2} \right)  \times \\
& \qquad \times \gamma^{(k+1)} \left( x_1-\frac{1}{2} P_\omega
(x_1-x_1^\prime)-\frac{R_\omega (z)}{4}, X_{2:k}, \right. \\
& \qquad \qquad \qquad \qquad \qquad \qquad \left.
\frac{x_1+x_1^\prime}{2}+\frac{1}{2} P_\omega
(x_1-x_1^\prime)+\frac{R_\omega (z)}{4},\right.\\
& \qquad \qquad  \left. 
x_1^\prime + \frac{1}{2} P_\omega (x_1-x_1^\prime)+
\frac{R_\omega (z)}{4}, X_{2:k}^\prime, \right. \\
& \qquad \qquad \qquad \qquad \qquad \qquad \left.
\frac{x_1+x_1^\prime}{2}-\frac{1}{2} P_\omega
(x_1-x_1^\prime)-\frac{R_\omega (z)}{4}\right)  \\
\end{aligned}
\end{equation*}
\end{proof}

We now turn to the Boltzmann equation. Here it suffices to notice that
a solution $f$ of the Boltzmann equation is just a \emph{factorized}
solution of the Boltzmann hierarchy, i.e.
$f^{(k)} = f^{\otimes k}$. 
Using Proposition \ref{prop:App-BH-TR} and Proposition 
\ref{prop:App-BH-Coll}, we obtain:
\begin{corollary}
\label{cor:App-BE-TR}
Let $f \in L^1 \left([0,T],L^2 (\mathbb{R}^d \times \mathbb{R}^d)
\right)$ and let $\gamma$ denote the inverse Wigner transform
of $f$. Then if
\begin{equation}
\left( \partial_t + v \cdot \nabla_x \right) f = g
\end{equation}
holds in the sense of distributions, then we have
\begin{equation}
\left( i \partial_t + \frac{1}{2} \left( \Delta_x -
\Delta_{x^\prime}\right)\right) \gamma = i \int_{\mathbb{R}^d}
g \left( t,\frac{x+x^\prime}{2},v\right)
e^{iv\cdot (x-x^\prime)} dv
\end{equation}
in the sense of distributions.
\end{corollary}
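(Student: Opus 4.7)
The plan is to derive Corollary \ref{cor:App-BE-TR} immediately from Proposition \ref{prop:App-BH-TR} by specializing to $k=1$. Under the identification $X_1 \leftrightarrow x$, $X_1^\prime \leftrightarrow x^\prime$, $V_1 \leftrightarrow v$, $f^{(1)} \leftrightarrow f$, $g^{(1)} \leftrightarrow g$, and $\gamma^{(1)} \leftrightarrow \gamma$, the hypothesis $(\partial_t + V_1 \cdot \nabla_{X_1}) f^{(1)} = g^{(1)}$ of the proposition coincides verbatim with the hypothesis $(\partial_t + v \cdot \nabla_x) f = g$ of the corollary, and the conclusion of the proposition reduces to the claim that $(i\partial_t + \tfrac{1}{2}(\Delta_x - \Delta_{x^\prime}))\gamma = i \int_{\mathbb{R}^d} g(t, (x+x^\prime)/2, v) e^{iv\cdot(x-x^\prime)} dv$ in the sense of distributions.

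Alternatively, one can simply copy the computation in the proof of Proposition \ref{prop:App-BH-TR} in the one-particle case: differentiate the inverse Wigner transform in $t$, substitute the hypothesis to replace $\partial_t f$ by $-v\cdot\nabla_x f + g$, rewrite $v\cdot\nabla_x f(t,(x+x^\prime)/2,v)$ as $v\cdot(\nabla_x + \nabla_{x^\prime})[f(t,(x+x^\prime)/2,v)]$ (the symmetric derivative in $x$ produces a factor of $1/2$ that matches the chain rule), and then convert the remaining $v$-factor into a spatial derivative of the exponential using the identity $iv\, e^{iv\cdot(x-x^\prime)} = \tfrac{1}{2}(\nabla_x - \nabla_{x^\prime}) e^{iv\cdot(x-x^\prime)}$. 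Pulling the spatial operators outside the $v$-integral then yields $-\tfrac{1}{2}(\Delta_x - \Delta_{x^\prime})\gamma$ as the transport contribution, while the source term $g$ passes directly through the inverse Wigner transform to produce the right-hand side.

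There is essentially no obstacle here, as the single-particle transport case is strictly contained in the multi-particle case that has already been established. The only technicality is distributional rigor, which is handled exactly as in the proof of Proposition \ref{prop:App-BH-TR}: since the inverse Wigner transform is an isometry $L^2_{x,v} \to L^2_{x,x^\prime}$ at each fixed time, the hypothesis $f \in L^1_t L^2_{x,v}$ ensures $\gamma \in L^1_t L^2_{x,x^\prime}$, so both sides of the target equation are well-defined as distributions on $[0,T]\times\mathbb{R}^d_x \times \mathbb{R}^d_{x^\prime}$ and all integration-by-parts and differentiation-under-the-integral steps above become legitimate identities after testing against a Schwartz test function (approximating $f$ and $g$ by smooth functions with rapid decay if desired).
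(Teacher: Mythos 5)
Your proposal is correct and coincides with the paper's (implicit) argument: the paper derives Corollary \ref{cor:App-BE-TR} by appealing to Proposition \ref{prop:App-BH-TR}, and specializing that proposition to $k=1$ with $f^{(1)}=f$, $g^{(1)}=g$ gives the statement verbatim. Your alternative direct computation is identical to the one-particle computation already carried out both in the unlabeled lemma of Section \ref{sec:2} and in the proof of Proposition \ref{prop:App-BH-TR}, so there is nothing further to check.
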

\begin{corollary}
\label{cor:App-BE-Coll}
Let $f(x,v)$ be a Schwartz function, and let $\gamma$ denote its
inverse Wigner transform. Then
\begin{equation}
\begin{aligned}
& i \int Q(f,f) \left( \frac{x+x^\prime}{2},v\right)
e^{i v \cdot (x-x^\prime)} dv = \\
& = \frac{i}{2^{2d} \pi^d} \int_{\mathbb{S}^{d-1}} d\omega
\int_{\mathbb{R}^d} dz \hat{\mathbf{b}}^\omega
\left( \frac{z}{2}\right) \times \\
&\qquad \times \left\{
\gamma\left( x - \frac{1}{2} P_\omega (x-x^\prime) -
\frac{R_\omega (z)}{4},
x^\prime + \frac{1}{2} P_\omega (x-x^\prime) +
\frac{R_\omega (z)}{4}\right) \times \right. \\
& \qquad \times \gamma \left( \frac{x+x^\prime}{2}+
\frac{1}{2} P_\omega (x-x^\prime)+\frac{R_\omega (z)}{4},
\frac{x+x^\prime}{2}-\frac{1}{2} P_\omega (x-x^\prime)
-\frac{R_\omega (z)}{4}\right) \\
&\qquad \left.-\gamma \left( x-\frac{z}{4},x^\prime + \frac{z}{4}\right)
\gamma \left( \frac{x+x^\prime}{2}+\frac{z}{4},
\frac{x+x^\prime}{2}-\frac{z}{4}\right) \right\}
\end{aligned}
\end{equation}
\end{corollary}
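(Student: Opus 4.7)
The plan is to derive this corollary as a direct specialization of Proposition \ref{prop:App-BH-Coll} to the factorized case $f^{(2)} = f \otimes f$, with $k=1$. The first step is to observe that for any Schwartz function $f(x,v)$, writing out the definitions of $Q$, $C^+_{1,2}$ and $C^-_{1,2}$ and comparing them yields the pointwise identity
\begin{equation*}
Q(f,f)(t,x,v) = C^+_{1,2}\bigl(f \otimes f\bigr)(t,x,v) - C^-_{1,2}\bigl(f \otimes f\bigr)(t,x,v),
\end{equation*}
because both sides involve $f(t,x,v_2^*)f(t,x,v^*)$ in the gain part and $f(t,x,v)f(t,x,v_2)$ in the loss part, and in $C^\pm_{1,2}$ the evaluation at $x_2 = x_1 = x$ is already built into the definition of the collision operator on the hierarchy.

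Next, I would take the inverse Wigner transform of $Q(f,f)$ by applying the integral operator $i \int (\cdot)\bigl(\tfrac{x+x'}{2},v\bigr) e^{iv\cdot(x-x')} dv$ term by term. Linearity together with the identity above reduces the problem to computing the inverse Wigner transforms of $C^+_{1,2}(f\otimes f)$ and $C^-_{1,2}(f\otimes f)$, which are precisely the two formulas furnished by Proposition \ref{prop:App-BH-Coll} when $k=1$. The only remaining input is the elementary fact that the inverse Wigner transform sends tensor products to tensor products: if $f^{(2)} = f\otimes f$ then $\gamma^{(2)}(x_1,x_2,x_1',x_2') = \gamma(x_1,x_1')\,\gamma(x_2,x_2')$. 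Substituting this factorization into the expressions produced by Proposition \ref{prop:App-BH-Coll} and reading off the shifted arguments yields exactly the gain term and loss term appearing in the corollary's formula.

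Because Proposition \ref{prop:App-BH-Coll} has already absorbed all of the technical work, namely the change of variables $u_1 = \tfrac12(v_{k+1}+v_1)$, $u_2 = \tfrac12(v_{k+1}-v_1)$, the change to the displacement variables $w,z$, and the recognition of $\hat{\mathbf{b}}^\omega(z/2)$ after integrating out $u_1$, there is essentially no obstacle here beyond bookkeeping: the most delicate point is simply matching the arguments of the two copies of $\gamma$ in the gain term, where the presence of the rotation $R_\omega$ and the projection $P_\omega$ must be tracked carefully. Since Schwartzness of $f$ transfers to $\gamma$ (via the Fourier side), all integrals converge absolutely and every manipulation is justified by Fubini and standard Fourier inversion; the corollary then follows by writing down the two formulas from Proposition \ref{prop:App-BH-Coll} with the tensor-product substitution and subtracting.
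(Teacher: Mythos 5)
Your proposal is correct and is exactly the paper's route: the paper itself derives Corollary \ref{cor:App-BE-Coll} by specializing Proposition \ref{prop:App-BH-Coll} to $k=1$ with the factorized datum $f^{(2)} = f\otimes f$, using $Q(f,f) = C^+_{1,2}(f\otimes f) - C^-_{1,2}(f\otimes f)$, the factorization $\gamma^{(2)}(x_1,x_2,x_1',x_2') = \gamma(x_1,x_1')\gamma(x_2,x_2')$, and linearity of the inverse Wigner transform. No new ideas are needed beyond the bookkeeping you describe.
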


\end{appendix}

\bibliography{wigner-paper}

\end{document}